\def\p{\partial}
\DeclareMathOperator{\supp}{supp}
\def\C{\mathbb C}
\def\R{\mathbb R}
\def\N{\mathbb N}
\DeclarePairedDelimiter\norm{\lVert}{\rVert}
\newtheorem{theorem}{Theorem}[section]
\newtheorem{lemma}[theorem]{Lemma}
\newtheorem{proposition}[theorem]{Proposition}
\newtheorem{corollary}[theorem]{Corollary}
\theoremstyle{definition}
\newtheorem{definition}[theorem]{Definition}
\theoremstyle{remark}
\newtheorem{remark}[theorem]{Remark}
\newcommand{\abs}[1]{\lvert #1 \rvert}
\newcommand{\eps}{\epsilon}
\newcommand{\tr}{\text{Tr}}
\newcommand{\s}{\hspace{0.5pt}}
\newcommand{\ol}{\overline}
\newcommand{\op}{\overline \p}
\newcommand{\ccdot}{\,\cdot\,}
\newcommand{\z}{\overline{z}}
\title{An inverse problem for general minimal surfaces}
\author[C. Cârstea]{Cătălin I. Cârstea}
\address{National Yang Ming Chiao Tung University, Hsinchu, Taiwan}
\email{catalin.carstea@gmail.com}
\author[M. Lassas]{Matti Lassas}
\address{Department of Mathematics and Statistics, University of Helsinki, Helsinki, Finland}
\curraddr{}
\email{matti.lassas@helsinki.fi}
\author[T. Liimatainen]{Tony Liimatainen}
\address{Department of Mathematics and Statistics, University of Helsinki, Helsinki, Finland}
\curraddr{}
\email{tony.liimatainen@helsinki.fi}
\author[L. Tzou]{Leo Tzou}
\address{University of Amsterdam, Amsterdam, Netherlands}
\email{leo.tzou@gmail.com}
\begin{document}
\begin{abstract}
In this paper we consider an inverse problem of determining a minimal surface embedded in a Riemannian manifold.
We show under a topological condition that if $\Sigma$ is a $2$-dimensional embedded minimal surface, then the knowledge of the Dirichlet-to-Neumann map associated to the minimal surface equation determines $\Sigma$ up to an isometry. Without the topological condition, we show that a conformal factor of a general minimal surface $\Sigma$ can be recovered. 

We develop a semiclassical nonlinear calculus for complex geometric optics solutions, 
which allows an efficient error analysis for multiplication of the correction terms of the solutions. 
The calculus is independent of the application to the minimal surface equation and we expect it to have applications in various inverse problems for nonlinear equations in dimension $2$, in both $\R^2$ and geometric settings. Other applications of the results include generalized boundary rigidity problem and the AdS/CFT correspondence in physics.

\end{abstract}
\maketitle
\tableofcontents

\section{Minimal surface equation for general product metric}
We consider an inverse problem for $2$-dimensional minimal surfaces embedded in $3$-dimensional Riemannian manifold $(N,\overline g)$ with boundary. In this paper, we consider minimal surfaces in variational sense and define that
a $C^2$-smooth surface in  $(N,\overline g)$ is a minimal surface if its mean curvature is zero.  Let $\Sigma$ be a minimal surface. We consider minimal surfaces that are perturbations of $\Sigma$ given as graph of functions over $\Sigma$ and write the minimal surface equation in Fermi-coordinates associated to $\Sigma$. In these coordinates the metric of $(N,\overline g)$ has the form 
\begin{equation}\label{prod_metric}
 \overline g=ds^2 +g_{ab}(x,s)dx^adx^b.
\end{equation}
Here $g_s:=g(\ccdot,s)$ are a $1$-parameter family of Riemannian metrics on $\Sigma$. Here also $s$ belongs to an open interval in $\R$ and $(x^a)$ are coordinates on $\Sigma$.
For the parts of this paper that hold in general dimension we write $\dim(N)=n+1$ and $\dim(\Sigma)=n$, $n \geq 2$. We remark that Fermi-coordinates always exist and thus viewing the minimal surface equation in these coordinates is not a restriction of generality.

Let 
\[
 F(x)=(u(x),x)
\]
be the graph of a function $u:\Sigma\to \R$. 
Then the graph $F$ is a minimal surface if in Fermi-coordinates the function $u$ satisfies
\begin{equation}\label{eq:minimal_surface_general}
 -\frac{1}{\det(g_u)^{1/2}}\nabla\cdot \left( g_u^{-1}\frac{\det(g_u)^{1/2}}{\sqrt{1+\abs{\nabla u}^2_{g_u}}} \right)\nabla u + f(u,\nabla u) =0 \quad  \hbox{ on }\Sigma,
\end{equation}
where 
\[
 f(u,\nabla u)=\frac{1}{2}\frac{1}{(1+\abs{\nabla u}^2_{g_u})^{1/2}}(\p_sg_u^{-1})(\nabla u,\nabla u)+\frac{1}{2}(1+\abs{\nabla u}^2_{g_u})^{1/2}\text{Tr}(g_u^{-1}\p_sg_u).
\]
Here $\nabla$ refers to $n$ dimensional $\R^n$ Euclidean gradient and $\ccdot$ is the Euclidean inner product on $\R^n$. Throughout this paper we denote 
\[
 g_u(x)=g_{u(x)}=g(x, u(x)).
\]
We refer to the equation \eqref{eq:minimal_surface_general} as the minimal surface equation. We note that if $g(x,s)$ is independent of $s$ we have $f(u,\nabla u)=0$. In this case  the minimal surface equation is the one for the product metric $e\oplus g$ studied recently in \cite{carstea2022inverse}. Here $e$ is the Euclidean metric on $\R$. We will derive the minimal surface equation \eqref{eq:minimal_surface_general} in Section \ref{Section 2}.

We define the associated \emph{Dirichlet-to-Neumann} map (DN map in short) for \eqref{eq:minimal_surface_general} by
\begin{equation}\label{eq:DN_map_for_nonlin}
\Lambda_{\overline g} f = \left. \p_\nu u \right|_{\p \Sigma} \text{ for }  \, f \in U_\delta,
\end{equation}
where $\nu$  is the unit exterior normal vector of $\Sigma$ with respect to the metric $g$ at $(x,u(x))$,
that is, 
$$
g_{u(x)}(\nu,\nu)=1,\quad g_{u(x)}(\nu,\eta)=0,\quad \hbox{for }\nu\in T_x\partial \Sigma.
$$
In \eqref{eq:DN_map_for_nonlin}, $u$ is the unique small solution, satisfying the conditions given in Proposition \ref{prop:local_well_posedness}, to the equation \eqref{eq:minimal_surface_general}
Here $u$ is the unique small solution to \eqref{eq:minimal_surface_general} and $U_\delta=\{f\in  C^{2,\alpha}(\Sigma): \norm{f}_{C^{2,\alpha}(\p \Sigma)}\leq \delta\}$ for some $\delta>0$.  
We refer to Section \ref{Section 2} for details about the local well-posedness of \eqref{eq:minimal_surface_general} and  the DN map.

We present our main results next. To avoid various standard-like boundary determination arguments, in this paper we make the technical assumption that the quantities we consider are a priori known on the boundary. For this purpose, we need the following definition: Let $\Theta_1$ and $\Theta_2$ be tensors fields on respective Riemannian surfaces $(\Sigma_1,g_1)$ and $(\Sigma_2,g_2)$, which have a mutual boundary $\p \Sigma$. We say that $\Theta_1=\Theta_2$ to infinite order on $\p\Sigma$ if the coordinate representation of $\Theta_1$ in $g_1$-boundary normal coordinates agrees with that of $\Theta_2$ in $g_2$-boundary normal coordinates to infinite order on $\p \Sigma$. In invariant terms this means that $\mathcal{L}_{\nu_1}^l\Theta_1|_{\p \Sigma}=\mathcal{L}_{\nu_2}^l\Theta_2|_{\p \Sigma}$, $l=0,1,\ldots$, where $\mathcal{L}$ is the Lie derivative and $\nu_\beta$ is the locally defined vector field dual to the gradient of the distance to $\p \Sigma$ in $(\Sigma_\beta,g_\beta)$. 

We say that two surfaces $\Sigma_1$ and $\Sigma_2$ with a mutual boundary $\p \Sigma$ are diffeomorphic to a fixed domain in $\R^2$ by boundary fixing maps if there is a domain $\Omega\subset \R^2$ and diffeomorphisms $\Theta_\beta:\Sigma_\beta\to \Omega$, $\beta=1,2$, such that $\Theta_1|_{\p \Sigma}=\Theta_2|_{\p \Sigma}$.
\begin{theorem}\label{thm:main}
Let $(\Sigma_1, g_1)$ and $(\Sigma_2,g_2)$ be Riemannian surfaces, which are diffeomorphic to a fixed domain in $\R^2$ by boundary fixing maps. Assume that there are $1$-parameter families of Riemannian metrics $g_\beta(\ccdot,s)$, $\beta=1,2$, on $\Sigma_\beta$ and that $\Sigma_\beta$ are minimal surfaces in the sense that $0$ is a solution to \eqref{eq:minimal_surface_general} on both $\Sigma_\beta$. We also assume that $\p_s^k|_{s=0}g_1(\ccdot,s)=\p_s^k|_{s=0}g_2(\ccdot,s)$ to infinite order on $\p \Sigma$ for $k=0,1,2$.

Assume that the associated DN maps of \eqref{eq:minimal_surface_general} of manifolds $(\Sigma_1,g_1)$ and $(\Sigma_2,g_2)$ satisfy
for some $\delta>0$ sufficiently small and for all $f\in U_\delta$ 
\[
\Lambda_{g_1}f = \Lambda_{g_2}f,
\]
Then there is an isometry $F:M_1\to M_2$, 
\[
  F^*g_2=g_1,
\]
which satisfies $F|_{\p \Sigma}=\textrm{Id}$. Also $F^*\eta_2=\eta_1$, where $\eta_\beta$ are the scalar second fundamental forms of $(\Sigma_\beta,g_\beta)$. 
\end{theorem}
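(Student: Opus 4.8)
The plan is to use the real-analytic dependence of $\Lambda_{g_\beta}$ on the boundary data (Proposition \ref{prop:local_well_posedness}) and to read off geometric information from its first two derivatives at $f=0$: the first derivative reduces the problem to the two-dimensional Calderón problem for a Schrödinger operator, and the second derivative, analyzed with complex geometric optics (CGO) solutions together with the semiclassical nonlinear calculus of this paper, upgrades the resulting conformal identification to an isometry and recovers $\eta$.

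First I would linearize \eqref{eq:minimal_surface_general} at $u\equiv 0$. The quasilinear and gradient-quadratic terms drop out, and since $f(0,0)=\tfrac12\tr(g_\beta^{-1}\p_sg_\beta)|_{s=0}=0$ by minimality one is left with a Schrödinger operator $L_\beta=-\Delta_{g_\beta(\ccdot,0)}+q_\beta$ in which the potential is a universal second-order expression in $g_\beta(\ccdot,0)$, $\p_sg_\beta|_{s=0}$ and $\p_s^2g_\beta|_{s=0}$ (concretely $q_\beta=\tfrac12\big(\tr(g_\beta^{-1}\p_s^2g_\beta)-\tr((g_\beta^{-1}\p_sg_\beta)^2)\big)\big|_{s=0}$); this is exactly why the hypothesis only needs agreement of $\p_s^kg_\beta|_{s=0}$ to infinite order on $\p\Sigma$ for $k=0,1,2$. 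The hypothesis $\Lambda_{g_1}=\Lambda_{g_2}$ gives $D\Lambda_{g_1}|_0=D\Lambda_{g_2}|_0$, i.e. the DN maps of $L_1$ and $L_2$ agree. By the known solution of the Calderón problem for Schrödinger operators on surfaces (Bukhgeim; Imanuvilov--Uhlmann--Yamamoto; Guillarmou--Tzou), using the topological hypothesis that the $\Sigma_\beta$ are planar domains by boundary-fixing diffeomorphisms, there is a boundary-fixing diffeomorphism $F$ with $F^*g_2=c\,g_1$ for a positive conformal factor $c$ with $c=1$, $\p_\nu c=0$ on $\p\Sigma$, and with the potentials matching in the $g_1$-conformal gauge (equivalently $q_1=c\,q_2$ after pullback). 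From here I identify $\Sigma_1=\Sigma_2=\Sigma$ and $g_2=c\,g_1$, so that $F^*L_2=c^{-1}L_1$.

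Next I would differentiate $\Lambda_{g_\beta}$ twice at $f=0$. Writing $Q_\beta(u)=L_\beta u+B_\beta(u,u)+O(u^3)$, where the symmetric bilinear form $B_\beta$ comes from the second-order expansions of $(1+\abs{\nabla u}^2_{g_u})^{-1/2}$, $\det(g_u)^{1/2}$ and $f(u,\nabla u)$ — so $B_\beta$ is built from $g_\beta(\ccdot,0)$, $\p_sg_\beta|_{s=0}$ (hence from the scalar second fundamental form $\eta_\beta$) and $\p_s^2g_\beta|_{s=0}$ — the equality of the second linearizations yields, after a standard integration by parts (Green's identity) against a solution $w$ of the adjoint linearized equation with matching boundary values, the integral identity
\[
\int_\Sigma B_1(v_1,v_2)\,w\;dV_{g_1}=\int_\Sigma (F^*B_2)(v_1,v_2)\,w\;dV_{g_2}
\]
for all solutions $v_1,v_2$ of $L_1v=0$ and all $w$ of $L_1^*w=0$ (the solutions on $\Sigma_2$ being the $F$-pullbacks of $v_1,v_2,w$). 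Into this identity I would feed CGO solutions of Bukhgeim type, $v_j=e^{\pm\Phi/h}(a_j+r_j)$ built from a Morse holomorphic phase $\Phi$ whose critical point is a chosen interior point $p$, so that as $h\to 0$ stationary phase localizes the identity at $p$; the amplitudes $a_j$ come from the standard $\op$-construction and the $r_j$ are the correction terms. The semiclassical nonlinear calculus of this paper is precisely what is needed here: it shows that every term in the expanded product containing at least one correction $r_j$ contributes $o(1)$, even though the $r_j$ are small only in $h$-weighted norms while the $a_j$ are singular at $p$ and the nonlinearity forces one to control triple products of such objects. This error analysis for products of correction terms is the step I expect to be the main obstacle, and it is the technical innovation announced in the abstract.

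Finally, letting $h\to 0$ and varying $p$ over $\Sigma$ (together with the remaining, directionally sensitive freedom in the phases and amplitudes), the localized identities become pointwise relations among $c$, $g_1$, $\eta_1$, $\eta_2$ and the second $s$-derivatives of the $g_\beta$. The pieces of $B$ not equal to the conformally invariant Dirichlet pairing — those carrying $\p_sg_\beta^{-1}$ against $\nabla v\cdot\nabla v$, and those coming from the $u$-dependence of $g_u^{-1}$ inside the divergence term — are the ones that detect the conformal factor; as in the product-metric case \cite{carstea2022inverse} they force $c\equiv 1$ in a neighbourhood of each interior point, hence $c\equiv 1$ on $\Sigma$ by connectedness and the boundary data $c=1$, $\p_\nu c=0$. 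Thus $F^*g_2=g_1$ with $F|_{\p\Sigma}=\mathrm{Id}$; and with $c=1$ the remaining pointwise identities reduce to $\p_sg_2|_{s=0}=\p_sg_1|_{s=0}$ after pullback, i.e. $F^*\eta_2=\eta_1$, completing the proof. (If the planarity hypothesis is dropped, only the conformal identification and the pointwise constraints on $c$ remain available, which is the weaker statement advertised in the abstract.)
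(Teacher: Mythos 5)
The first step of your proposal agrees with the paper's Step 1 (first linearization reduces to the $2$D Calder\'on problem, \cite{imanuvilov2012partial} gives the conformal diffeomorphism $F$). The gap is in what follows: you claim the \emph{second} linearization already pins down $c\equiv 1$ and $\eta$. It does not, and the paper's Step 2 makes it clear why. The second-linearization integral identity (Lemma~\ref{Lem:Integral identity_2nd}) only ever pairs the solutions against the \emph{unknown} coefficients $k^{(1)}=\p_u g_u^{-1}|_{u=0}$ and $h^{(2)}$; after pulling the $\Sigma_2$ identity back through $F$ and subtracting, you get relations of the form $k_1^{(1)}=c\,F^*k_2^{(1)}$ and $h_1^{(2)}=c\,F^*h_2^{(2)}$. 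Since $k_2^{(1)}$ and $h_2^{(2)}$ are themselves unknown, these relations are manifestly invariant under the gauge $g_1\mapsto c g_1$, $k_1^{(1)}\mapsto c k_1^{(1)}$, $h_1^{(2)}\mapsto c h_1^{(2)}$, and therefore place no constraint whatsoever on $c$. In particular, your identification of ``the pieces of $B$ carrying $\p_s g^{-1}$ against $\nabla v\cdot\nabla v$'' as the detectors of $c$ is incorrect: those pieces are precisely the ones that are covariant under this rescaling.

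What actually breaks the conformal gauge is a term with a fixed, explicitly-$g$-dependent quartic structure $g(\nabla v^j,\nabla v^k)\,g(\nabla v^l,\nabla v^m)$, which carries conformal weight $c^{-1}$ rather than $c^0$. This term originates from $\p_{\eps_j\eps_k}^2|_{\eps=0}(1+\abs{\nabla u}^2_{g_u})^{-1/2}=-g(\nabla v^j,\nabla v^k)$ inside the operator $P^u$. One derivative of $(1+\abs{\nabla u}^2_{g_u})^{-1/2}$ at $\eps=0$ vanishes (because $\nabla u|_{\eps=0}=0$), so this effect first enters the linearized equations through $P^{jk}$, i.e.\ only at the \emph{third} linearization. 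This is why the paper has a Step 3: Proposition~\ref{prop:asymptotics_for_the_3rd_lin} isolates the $O(h^{-1})$ stationary-phase contribution of this quartic term and produces the pointwise identity $1-c^{-1}(z_0)=0$. Without the third linearization your argument stalls after Step 2 having only the gauge-invariant relations above, which is exactly the situation Theorem~\ref{thm:main2} describes before the conformal factor is recovered. (Your appeal to \cite{carstea2022inverse} for a second-order determination is also off: Remark~\ref{rem:generalization_of_earlier} records that even in the product-metric case both the second and third linearizations are used.)
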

We note that the scalar second fundamental form $\eta_\beta(X,Y)=\langle \nabla_XN_\beta, Y\rangle_{\overline g_\beta}$ in the theorem depends also on the extrinsic geometry of $\Sigma_\beta$ in $(N_\beta, \overline g_\beta)$. Here $N_\beta$ is the unit normal to $\Sigma_\beta$, $X,Y\in T\Sigma_\beta$ and $\overline g_\beta$ as in \eqref{prod_metric}.

The reason for the assumption that $\Sigma_1$ and $\Sigma_2$ are domains in $\R^2$ up to diffeomorphisms in Theorem \ref{thm:main} is because we will refer to \cite{imanuvilov2012partial} in one part of its proof. To the best of our knowledge, the generalization of \cite{imanuvilov2012partial} to Riemannian surfaces is an open problem. Without the assumption that the minimal surface is topologically a domain in $\R^2$ we have:
\begin{theorem}\label{thm:main2}
Let $(\Sigma, g_1)$ and $(\Sigma,g_2)$ are conformally equivalent Riemannian surfaces, $g_2=cg_1$. Assume otherwise as in Theorem \ref{thm:main}, but with $\Sigma$ in place of both $\Sigma_1$ and $\Sigma_2$. 
Then 
\[
 c\equiv 1
\]
and $\eta_1=\eta_2$. 
%
%
\end{theorem}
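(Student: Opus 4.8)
The plan is to reduce Theorem \ref{thm:main2} to (the internal machinery behind) Theorem \ref{thm:main}, since both concern the same DN map for the minimal surface equation \eqref{eq:minimal_surface_general}, but now the two metrics live on the \emph{same} underlying surface $\Sigma$ and are conformal, $g_2 = c\,g_1$. First I would revisit the linearization of the DN map. Differentiating $\Lambda_{g_\beta}$ at $f=0$ (using that $u=0$ solves \eqref{eq:minimal_surface_general} for both $\beta$, and that $\p_s^k g_1 = \p_s^k g_2$ to infinite order on $\p\Sigma$ for $k=0,1,2$, which controls the boundary behaviour of the coefficients $f(u,\nabla u)$) gives that the first linearizations $d\Lambda_{g_1}|_0 = d\Lambda_{g_2}|_0$ agree as DN maps of the linearized operator, which is (a zeroth-order perturbation of) the Laplace--Beltrami operator $\Delta_{g_\beta}$ together with curvature/second-fundamental-form terms coming from $f$. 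In two dimensions $\Delta_{g_2} = c^{-1}\Delta_{g_1}$, so the leading parts are automatically compatible with any conformal factor; the equality of DN maps at the level of the \emph{linearization alone} therefore does not pin down $c$ — this is exactly why the theorem only claims the conclusion after using the full nonlinear DN map, and why the semiclassical nonlinear calculus for CGO solutions advertised in the abstract is needed.

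The core step is the second-order (or higher) linearization. I would insert boundary data $f = \eps_1 f_1 + \eps_2 f_2$, expand the solution $u = \eps_1 v_1 + \eps_2 v_2 + \eps_1\eps_2 w + \cdots$, and extract from $\p_{\eps_1}\p_{\eps_2}|_{\eps=0}(\Lambda_{g_1} - \Lambda_{g_2})f = 0$ an integral identity of the form $\int_\Sigma \big(Q_{g_1} - Q_{g_2}\big)(v_1, v_2, v_3)\,dV = 0$, where $v_1,v_2$ solve the linearized equation for $g_1$, $v_3$ solves the (formal) adjoint linearized equation, and $Q_{g_\beta}$ is the quadratic form built from the nonlinearity — schematically the Hessian in $(u,\nabla u)$ of the left side of \eqref{eq:minimal_surface_general}, which involves $g_\beta$, $g_\beta^{-1}$, $\det g_\beta$ and $\p_s g_\beta$. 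Here I would use complex geometric optics solutions: take $v_1, v_3$ concentrating (via holomorphic/antiholomorphic phases $e^{\Phi/h}$) so that the product $v_1 v_3$ localizes near an arbitrary interior point $x_0$, and let $v_2$ range over a large enough family (e.g. $v_2 = 1$ and $v_2$ with an independent CGO phase) to test the identity against enough functions. The semiclassical nonlinear calculus from the paper is what makes the error terms from multiplying the correction terms of the CGOs negligible as $h\to 0$, leaving a pointwise identity relating $Q_{g_1}$ and $Q_{g_2}$ at $x_0$. Working out which combination of the tensors $g_\beta, \p_s g_\beta$ survives, and checking that under $g_2 = c g_1$ this combination forces $c(x_0) = 1$, is where I expect the real obstacle to lie: one must verify that the conformal weights do \emph{not} cancel at second order the way they do at first order, i.e.\ that the nonlinearity $f$ together with the $\sqrt{1+|\nabla u|^2_{g_u}}$-type factors breaks conformal invariance. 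In fact I expect the cleanest route is to show that the "conformal Laplacian"-type leading term combined with the already-pinned-down linearized data forces the conformal factor to satisfy an elliptic equation (e.g. that $\log c$ is harmonic with respect to $g_1$ off a determined set, with $\log c = 0$ and $\p_\nu \log c = 0$ on $\p\Sigma$ by the infinite-order boundary agreement), and then unique continuation gives $c \equiv 1$.

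Once $c \equiv 1$ is established, $g_1 = g_2$, and then the equality $\eta_1 = \eta_2$ follows along the lines of the corresponding step in Theorem \ref{thm:main}: with the metrics now literally equal, the nonlinearity $f(u,\nabla u)$ in \eqref{eq:minimal_surface_general} depends on the $s$-derivatives $\p_s g_\beta$, and the second linearization identity — now with $Q_{g_1} - Q_{g_2}$ depending only on $\p_s g_1 - \p_s g_2$ — together with the CGO testing yields $\p_s g_1 = \p_s g_2$ at $s=0$; since $\eta_\beta = -\tfrac12 \p_s g_\beta|_{s=0}$ in Fermi coordinates, this gives $\eta_1 = \eta_2$. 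I would point out that no boundary-fixing diffeomorphism is needed here precisely because $\Sigma_1 = \Sigma_2 = \Sigma$ and the gauge freedom has been eliminated by the conformal hypothesis, so the reference to \cite{imanuvilov2012partial} (and hence the topological restriction) can be dropped — at the cost of only recovering the conformal factor rather than the full isometry class in the general case.
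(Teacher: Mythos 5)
Your broad strategy — reduce to the machinery of Theorem \ref{thm:main} and observe that only Step 1 of that proof used the $\R^2$ topology — is exactly the paper's plan, and your opening observation about conformal invariance of $\Delta_g$ in two dimensions is also how the paper bypasses the reference to \cite{imanuvilov2012partial}: one obtains $h_1^{(1)} = c\, h_2^{(1)}$ via \cite{guillarmou2011calderon}, then $v_1^j = v_2^j$ by uniqueness of the Dirichlet problem, and takes $F = \text{Id}$, after which Steps 2 and 3 of the main proof apply verbatim.

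The genuine gap is in where you try to extract $c$. You propose to derive $c(x_0)=1$ from the \emph{second}-order integral identity, but the second linearization determines only the coupled relations $k_1^{(1)} = c\,\tilde k_2^{(1)}$ and $h_1^{(2)} = c\,\tilde h_2^{(2)}$ (see \eqref{eq:2nd_ord_recovery_proof}): under $g_2 = c\,g_1$ both sides transform with the same conformal weight, so at this order $c$ cannot be separated from the unknown tensor $k^{(1)}$ — the second-order identity is conformally covariant in exactly the way you are worried about, and it does \emph{not} break the degeneracy. It is the \emph{third} linearization that isolates $c$: once the second-order data has been used to cancel the $R$ terms in \eqref{eq:HRB}, the surviving leading contribution is $\int_\Sigma (1-c^{-1})\big[g(\nabla v_1,\nabla v_2)g(\nabla v_3,\nabla v_4)+\cdots\big]\,dV$, which by Proposition \ref{prop:asymptotics_for_the_3rd_lin} produces $c\,h^{-1}(1-c^{-1}(z_0)) + o(h^{-1})$, forcing $c(z_0)=1$ pointwise. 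Your proposed detour — showing $\log c$ satisfies an elliptic equation and invoking unique continuation — is not established and is not how the paper proceeds; nothing in the second-order identity yields such an equation. Finally, $\eta_1 = \eta_2$ is not a fresh post-hoc application of the second linearization: it is the combination of $k_1^{(1)} = c\,\tilde k_2^{(1)}$ (already obtained at second order) with $c\equiv 1$ (from third order) that gives $\eta_1 = \eta_2$.
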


The generalized boundary rigidity problem for minimal surfaces posed formally in \cite{Tracey} asks if areas of minimal surfaces embedded in a Riemannian manifold with boundary determine the Riemannian manifold. If the minimal surfaces are $1$-dimensional,
and thus minimizing geodesics, then the problem is the standard boundary rigidity
problem. One of the motivations for the generalized boundary rigidity problem is in
the AdS/CFT correspondence in physics, which we discuss in Section \ref{sec:adscft} below.

Let us explain how our results are related to the generalized boundary rigidity
problem. We assume that our minimal surfaces are given as solutions to the Dirichlet
problem for the minimal surface equation \eqref{eq:minimal_surface_general} in Fermi-coordinates. In this situation,
volumes of minimal surfaces determine the DN map of the minimal surface equation,
see Lemma \ref{lem:minimal surfaces and the DN map}. In general, however, the boundary of Fermi-coordinates might not
belong to the boundary of the manifold $N$, where a minimal surface is embedded. This poses a technical issue on how to pose the Dirichlet problem. For this reason, we study an exterior problem where $N$ is assumed to belong to a larger manifold $\widetilde N$. 
We refer to Section \ref{sec:areas_exterior_problem}
for details on this matter.

Let $(\Sigma,g)$ be a minimal surface embedded in $N$. \emph{The exterior problem} asks the following: If $\p \Sigma$ and the volumes of minimal surfaces $\Sigma'$ in the exterior manifold $\widetilde N$, whose boundaries satisfy $\p \Sigma'\subset \widetilde N\setminus N$, are known, do they determine the isometry type of $(\Sigma,g)$? For the exterior problem we obtain as a consequence Theorem \ref{thm:main} the following.

%
%

\begin{corollary}[Uniqueness result for the exterior problem]\label{cor:exterior_problem}
Let $(\widetilde N,\overline g)$ and $(N, \overline g|_N)$ be $3$-dimensional Riemannian manifolds with boundaries such that $N \subset\subset \widetilde N$.
Let $(\Sigma,g)$ be a $2$-dimensional minimal surface and assume it extends properly to a minimal surface $\widetilde \Sigma \subset \widetilde N$. Assume also that $ \widetilde \Sigma$ is topologically equivalent to a domain $\Omega\subset \R^2$ by a diffeomorphism $ \widetilde \phi: \widetilde \Sigma\to \Omega$. 
Let $\mathcal S$ be  is  a family of smooth  minimal surface  deformations of the surface $\widetilde \Sigma$.

Then, the knowledge of $\widetilde N\setminus N$, $\overline g|_{\widetilde N\setminus N}$, $\widetilde \Sigma\setminus \Sigma$, $\phi|_{\widetilde \Sigma\setminus \Sigma}$ and the functions $s\to (\partial \widetilde \Sigma(s),
\hbox{Vol}\s (\widetilde \Sigma(s))$, $s\in (-\delta_0,\delta_0)$, where  $\widetilde \Sigma(s)=\{\mathcal F(s,x):\s x\in \widetilde \Sigma\}$ and
$\mathcal F \in \mathcal S$,
determine 
  $(\Sigma,g)$ up to a boundary preserving isometry. The isometry also preserves the first fundamental form.
\end{corollary}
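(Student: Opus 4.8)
The plan is to reduce the exterior problem to the setting of Theorem \ref{thm:main} by showing that the given volume data determines the DN map of the minimal surface equation \eqref{eq:minimal_surface_general} on $\Sigma$ in Fermi coordinates, and then to quote that theorem. First I would set up Fermi coordinates associated to $\widetilde\Sigma$ in $(\widetilde N,\overline g)$, so that $\overline g$ takes the product form \eqref{prod_metric} in a tubular neighborhood of $\widetilde\Sigma$, with $g(\ccdot,s)$ a one-parameter family of metrics on $\widetilde\Sigma$; by construction $0$ solves \eqref{eq:minimal_surface_general} since $\widetilde\Sigma$ (hence $\Sigma$) is minimal. The family $\mathcal S$ of minimal-surface deformations $\mathcal F(s,\ccdot)$ produces, for each admissible boundary perturbation, a nearby minimal surface which — after writing it as a graph $F(x)=(w(x),x)$ over $\widetilde\Sigma$ in these Fermi coordinates — is a solution of \eqref{eq:minimal_surface_general} with some boundary value $f=w|_{\p\widetilde\Sigma}$. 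The key point, which is exactly the content of Lemma \ref{lem:minimal surfaces and the DN map}, is that knowing $\mathrm{Vol}(\widetilde\Sigma(s))$ as a function of $s$, together with $\p\widetilde\Sigma(s)$, determines $\p_\nu w|_{\p\widetilde\Sigma}$: differentiating the area functional and using that $w$ is a critical point kills the interior term and leaves a boundary integral pairing the normal derivative of $w$ against the variation field, so the derivative of volume along the deformation recovers the Neumann data. Thus the volume data determines $\Lambda_{\overline g}$ on a $\delta$-neighborhood $U_\delta$.

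Next I would localize the data to $\Sigma$ rather than all of $\widetilde\Sigma$. Since $\widetilde N\setminus N$, $\overline g|_{\widetilde N\setminus N}$, $\widetilde\Sigma\setminus\Sigma$ and $\widetilde\phi|_{\widetilde\Sigma\setminus\Sigma}$ are known, and the boundaries of the deformed surfaces lie in $\widetilde N\setminus N$, the part of each minimal surface $\widetilde\Sigma(s)$ lying outside $N$ is controlled by known data; by unique continuation for the (elliptic, quasilinear) minimal surface equation — or simply by restricting attention to deformations supported near a chosen hypersurface of $\widetilde N$ transverse to $\widetilde\Sigma$ cutting out $\Sigma$ — one recovers $\p\Sigma$ and the DN map $\Lambda_{\overline g|_N}$ of \eqref{eq:minimal_surface_general} on $\Sigma$ itself. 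At this stage the hypotheses of Theorem \ref{thm:main} are in place for the pair $(\Sigma,g)$ and the "model" surface constructed from the known exterior data: both are topologically domains in $\R^2$ (via $\widetilde\phi$ restricted and adjusted so that boundaries are fixed), both carry a one-parameter family of metrics with $0$ solving \eqref{eq:minimal_surface_general}, the required agreement of $\p_s^k g$ to infinite order on $\p\Sigma$ for $k=0,1,2$ holds because those jets are computable from $\overline g|_{\widetilde N\setminus N}$ near $\p\Sigma$, and the DN maps coincide.

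Applying Theorem \ref{thm:main} then yields a boundary-fixing isometry $F$ with $F^*g' = g$ and $F^*\eta' = \eta$, which gives the claimed determination of $(\Sigma,g)$ up to a boundary-preserving isometry preserving the first (and second) fundamental form. The main obstacle I expect is the second step: carefully arguing that the prescribed deformation family $\mathcal S$ is \emph{rich enough} to realize all boundary values in some $U_\delta$ for the graph equation — i.e. that the map from deformation parameters to Dirichlet data $w|_{\p\Sigma}$ is a submersion onto a neighborhood of $0$ — and simultaneously that deformations can be chosen with boundary behavior matching the known exterior data so that the volume-to-Neumann-data correspondence of Lemma \ref{lem:minimal surfaces and the DN map} transfers cleanly from $\widetilde\Sigma$ to $\Sigma$. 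Once the data is genuinely translated into an equality of DN maps for \eqref{eq:minimal_surface_general}, the rest is a direct invocation of Theorem \ref{thm:main}; the extrinsic statement about the first fundamental form follows since $g$ is precisely the first fundamental form of $\Sigma$ in the Fermi product metric.
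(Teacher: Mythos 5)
Your outline agrees with the paper's proof on the first step (use Lemma~\ref{lem:minimal surfaces and the DN map} to pass from volume data to the DN map of \eqref{eq:minimal_surface_general} on $\widetilde\Sigma$), but then it diverges in a way that opens a genuine gap. You propose to "localize" the DN map from $\widetilde\Sigma$ down to $\Sigma$ — either by unique continuation or by restricting to deformations supported near a hypersurface cutting out $\Sigma$ — and only then apply Theorem~\ref{thm:main} on $\Sigma$ itself. This step is not justified. The deformations in $\mathcal S$ have their Dirichlet data prescribed on $\p\widetilde\Sigma\subset\widetilde N\setminus N$; $\p\Sigma$ lies in the interior of $\widetilde\Sigma$, so there is no way to "restrict" the Dirichlet data to $\p\Sigma$ without solving in an unknown region, and computing Cauchy data of a solution at an interior hypersurface from its Cauchy data on an outer boundary is an ill-posed continuation through territory whose metric is precisely what we are trying to find. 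Moreover, the whole reason the paper works with $\widetilde\Sigma$ and the exterior manifold $\widetilde N$ (see the discussion opening Section~\ref{sec:areas_exterior_problem}) is that the Dirichlet problem for \eqref{eq:minimal_surface_general} cannot in general be posed on $\Sigma$ alone, because the Fermi-coordinate tube over $\Sigma$ need not stay inside $N$. You also misidentify where the difficulty lies: you flag the "richness" of $\mathcal S$, which the paper handles routinely via Proposition~\ref{prop:local_well_posedness}, but the actual obstruction is this DN-map localization.

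The paper sidesteps the problem entirely with a different strategy: apply Theorem~\ref{thm:main} to the \emph{full} extended surface $(\widetilde\Sigma,g)$ to obtain a boundary-fixing isometry $F$ of $\widetilde\Sigma$ preserving the second fundamental form; then show, using Lemma~\ref{lem:F_id_infty} (agreement of $F$ with the identity to infinite order on $\p\widetilde\Sigma$ in boundary normal coordinates) together with the uniqueness of isometries agreeing to high order on an open boundary portion (the argument of \cite[Theorem 3.3]{lassas2019conformal}, adapted to harmonic coordinates), that $F$ is \emph{the identity} on the known region $\widetilde\Sigma\setminus\Sigma$; finally, injectivity forces $F(\Sigma)=\Sigma$ and continuity gives $F|_{\p\Sigma}=\mathrm{Id}$. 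No localization of the DN map is needed. If you want to repair your approach you would need to supply a correct argument for recovering the DN map on $\p\Sigma$ from that on $\p\widetilde\Sigma$ together with the known exterior coefficients, and then still address the Fermi-coordinate issue over $\Sigma$ alone; the paper's route is cleaner and avoids both.
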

This corollary says that the knowledge of areas of minimal surfaces determine
them up to isometries. Thus, this is a result in the context of generalized
boundary rigidity problem. We mention that in this problem these $2$-dimensional minimal 
surfaces need to be glued together to construct the $3$-dimensional manifold, where the minimal surfaces are embedded.  We study how to do the gluing
in a future work.

\subsection{Sketch of the proof the main theorem}
Let us discuss how we determine a minimal surface $(\Sigma,g)$ from the DN map of \eqref{eq:minimal_surface_general} in the proof of Theorem \ref{thm:main}. A main part of the proof regards analysing products of numerous correction terms of complex geometrics optics solutions (CGOs). We call this analysis nonlinear calculus of CGOs. 

The strategy of the proof is based on the higher order linearization method that originates from \cite{kurylev2018inverse}. For this, let us consider $f_j\in C^{2,\alpha}(\p \Sigma)$, $j=1,2,3,4$ and $\alpha>0$. Let us denote by $u=u_{\eps_1f_1+ \cdots + \eps_4f_4}$ the solution of~\eqref{eq:minimal_surface_general} with boundary data $\eps_1f_1+ \cdots +\eps_4f_4$, where $\eps_j>0$ are sufficiently small parameters. We  write $\eps=0$ when referring to $\eps_1 = \cdots= \eps_4=0$. 

By taking the derivative $\p_{\eps_j}|_{\eps=0}$ of the solution $u_{\eps_1f_1+ \cdots + \eps_4f_4}$,  
we see that the function
\[
 v^j:=\frac{\p}{\p \eps_j}\Big|_{\eps=0}\s\s u_{\eps_1f_1+ \cdots + \eps_4f_4}
\]
solves the first linearized equation
\[
 \Delta_gv+qv=0,
\]
where 
\begin{align*}
 q(x)&=\frac{1}{2}\frac{d}{d s}\Big|_{s=0}\text{Tr}(g_s^{-1}\p_sg_s).
\end{align*}
One can check that the first linearized equation is the usual stability equation \cite{colding2011course} for the minimal surfaces.  
Since we know the DN map of \eqref{eq:minimal_surface_general}, we know the DN map of the first linearized equation (see Section \ref{Section 2}). The first linearized equation has a gauge symmetry in the sense that the coefficients $(g,q)$ and $(cg,c^{-1}q)$ give the same DN map. The DN map is also invariant under diffeomorphisms that are identical on the boundary. We determine $(\Sigma, g)$ up to conformal mapping by using the main result of \cite{imanuvilov2012partial}.  The rest of the proof is about determining the conformal factor and the scalar second fundamental form.

We proceed to higher order linearizations. Let us denote by
\[
 \eta(X,Y)=\langle \nabla_XN,Y\rangle_{\overline g}
\]
the scalar second fundamental form. It is a tensor operating on vector fields $X,Y$ tangential to $\Sigma$. The function $w^{jk}:=\frac{\p^2}{\p \eps_j\p \eps_k}\big|_{\eps=0}\s\s u_{\eps_1f_1+ \cdots + \eps_4f_4}$ satisfies the second linearized equation 
\[
 (\Delta_g+q)w^{jk}= \text{terms of the form } \eta(\nabla v^j, \nabla v^k)+ \text{lower order terms}. 
 \]
Lower order terms are terms that contain at most one gradient of a linearized solution $v^j$. The first aim of the second order linearization is to determine $\eta$, which is a $2\times 2$-matrix valued function. Since we know the DN map of second linearization, it follows that the integral 
\begin{multline}\label{eq:known_integral_2}
\int_{\Sigma} v^1 \eta(\nabla v^2,\nabla v^3\big)dV+\int_{\Sigma}v^2 \eta(\nabla v^1,\nabla v^3\big)dV +\int_{\Sigma} v^3 \eta(\nabla v^1,\nabla v^2\big)dV \\
+ \text{lower order and boundary terms}
 \end{multline}
is known. (We remark that in the proof we use slightly different notation where instead of $\eta$ we have $\p_sg_s^{-1}$.) We disregard lower order and boundary terms from the following argumentation.

To determine the matrix function $\eta$, we use CGOs of \cite{guillarmou2011identification} as the linearized solutions $v^j$. These are solutions to $\Delta_gv+qv=0$ given by the ansatz
\[
 e^{\Phi/h}(a+r_h),
\]
where $\Phi=\phi+i\psi$ is a holomorphic Morse function, $a$ is a holomorphic function and $r_h$ is a correction term given by 
\[
r_h=-  \overline{\p}_\psi^{-1}\sum_{j=0}^\infty T_h^j\op_\psi^{*-1}(qa).
\]
Here $\overline \p_\psi^{\s -1}$ is defined (modulo localization) by $\overline \p_\psi^{\s-1}f =\overline \p^{-1}(e^{-2i\psi/h}f)$, where $\overline \p^{-1}$ is the Cauchy-Riemann operator that solves $\overline \p^{-1}\op=\text{Id}$. We refer to details about $r_h$ to  
Section \ref{sec:CGOs} and only point out a main features of $r_h$ that its dependence in $h$ is quite complicated and it is not very small in the sense that $r_h=O_{L^2}(h^{1/2+\eps})$.  Especially $r_h$ is not given by a series of increasing powers of the small parameter $h$, which complicates the analysis in our inverse problem. 

Substitution of CGOs to the integral \eqref{eq:known_integral_2} yields a sum of the leading integral, containing only main terms $e^{\Phi/h}a$, and another integral containing combinations of the correction terms $r_h$. 
We use the leading integral to recover $h$ by stationary phase analysis after showing that the other integral containing correction terms $r_h$ is negligible when $h\to 0$. To this end, we for example need to show that
the integral
\begin{equation}\label{eq:example_error_integral}
 h^{-1}\int e^{4i\psi/h}f\op r_h=o(1),
\end{equation}
where $f$ vanishes to order $1$ at the critical point of $\psi$. Note that if $r_h$ would be independent of $h$, then the above integral is $O(1)$ by stationary phase. In a sense we are considering stationary phase for functions with a special $h$-dependence. We note that in higher dimension similar issue does not typically arise, because there correction terms of CGOs can be typically made to have correction terms of size $O_{H^k}(h^R)$ for any $k\in \N\cup \{0\}$ and $R\in \N$.

To have the required higher decay in $h$, we prove 
\begin{corollary}
 Let $f\in C_c^\infty(\Sigma)$ and $\deg(f)\geq l\geq 0$,  then 
 \[
  \int e^{4i\psi/h}f \p^l r_h=o(h^{\lfloor (\deg(f)-l)/2\rfloor +1}).
 \]
 We have analogous result for the case where $\p^l$ above is replaced by $\op^l$, or a mixture of powers of $\p$ and $\op$.
\end{corollary}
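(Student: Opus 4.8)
I would derive the corollary from the explicit structure of $r_h$ established in Section~\ref{sec:CGOs} together with a stationary-phase analysis adapted to the operator $\op^{-1}$. Recall that $r_h=-\op^{-1}(e^{-2i\psi/h}G_h)$ with $G_h=\sum_j T_h^j\op_\psi^{*-1}(qa)$ given by a convergent series, and that $\op_\psi^{-1}$ and $\op_\psi^{*-1}$ are the ordinary Cauchy transforms conjugated by $e^{\mp 2i\psi/h}$; thus $r_h$ is, up to summable corrections, a composition of Cauchy transforms interleaved with multiplications by $e^{\pm 2i\psi/h}$ and with smooth functions built from $q$, $a$ and the derivatives of $\Phi$, and it carries an overall power of $h$. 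The first step is to differentiate this representation. Using that $\op^{-1}$ is a one-sided inverse of $\op$ (so that $\p^l$ can be pulled through $\op^{-1}$, $\p^l\op^{-1}=\op^{-1}\p^l$, while $\op^l r_h$ is obtained by peeling the outermost $\op^{-1}$ off with $\op\,\op^{-1}=\mathrm{Id}$) together with the Cauchy--Riemann identities
\[
\p(e^{-2i\psi/h})=-\tfrac1h(\p\Phi)e^{-2i\psi/h},\qquad \op(e^{-2i\psi/h})=\tfrac1h\,\overline{\p\Phi}\,e^{-2i\psi/h}
\]
--- valid because $\Phi=\phi+i\psi$ is holomorphic, so $\p\bar\Phi=\op\Phi=0$ --- one expresses $\p^l r_h$, $\op^l r_h$ and their mixtures as a finite sum of terms of the schematic form $h^{-m}\,(\p\Phi)^{\alpha}(\overline{\p\Phi})^{\beta}\,\Pi\,\big[(\text{conjugated Cauchy transforms})(\text{smooth})\big]$, $0\le m\le l$, $\alpha+\beta\le m$, with $\Pi$ a product of higher derivatives of $\Phi$. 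Because $\Phi$ is Morse, $\p\Phi$ and $\overline{\p\Phi}$ vanish simply at the critical point $p$ of $\psi$, so each of the $m$ differentiated exponentials compensates its $h^{-1}$ by an extra simple zero of the amplitude at $p$.

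\textbf{Reduction to an oscillatory estimate near $p$.} Substituting into the integral, on each term the factors $e^{4i\psi/h}$ and the copies of $e^{\mp 2i\psi/h}$ recombine; the terms in which the oscillations cancel completely are non-oscillatory and carry a positive power of $h$, hence are $O(h^{N})$ for every $N$ by ordinary stationary phase and are discarded. On the remaining terms a surviving phase $e^{2i\psi/h}$ (or $e^{4i\psi/h}$) is left, with amplitude of size $h^{-m}$ times a function vanishing at $p$ to order at least $\deg f+m$, and I would estimate it by localizing: on $\{|x-p|\ge h^{1/2-\delta}\}$ the phase has no critical point, so repeated integration by parts --- using $\op(e^{2i\psi/h})=h^{-1}\overline{\p\Phi}\,e^{2i\psi/h}$, which is nonsingular there --- gives an $O(h^\infty)$ contribution; on the complementary parabolic neighbourhood one passes to holomorphic Morse coordinates in which $\Phi=\Phi(p)+z^2$, so $\psi$ becomes the standard saddle, rescales $x=p+h^{1/2}\zeta$, and evaluates the resulting model integral. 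The point of the accounting is that the extra $\deg f+m$ orders of vanishing exactly pay for the $h^{-m}$ and for the mild loss the Cauchy kernels cause under the rescaling, leaving the stated power $h^{\lfloor(\deg f-l)/2\rfloor+1}$; the strict ``$o$'' then follows because, the vanishing order and the number of Cauchy kernels having fixed parities, the borderline term either loses half a power of $h$ that is not recovered, or satisfies a strict inequality of exponents --- equivalently, because the relevant leading coefficient in the model integral vanishes by parity.

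\textbf{Main obstacle.} The genuinely delicate part is the model computation in the parabolic region, because the kernels $1/(z-w)$ of the Cauchy transforms are singular precisely on the diagonal $z=w$, which runs through the only critical point $(p,p)$ of the multivariable phase; the naive stationary-phase expansion is therefore unavailable, and dividing an amplitude by $\p\Phi$ (essentially by $z$) destroys smoothness of its non-holomorphic factors. Carrying this out requires evaluating the rescaled integral --- whose kernel is only locally integrable and coincides with the critical set --- by hand, and it is exactly here that the hypothesis $\deg f\ge l$ is used: one must track how many of the $\deg f$ units of vanishing of $f$ are consumed by the $l$ Cauchy kernels and how many remain to produce the power of $h$. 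Beyond this single computation, the remaining work is the bookkeeping of the first step through the finitely many nested conjugated Cauchy transforms and $T_h$-corrections in $r_h$, keeping uniform control of the intricate, non-power dependence of $r_h$ on $h$ --- the feature that makes the two-dimensional case harder than the higher-dimensional one, where $O(h^R)$ correction terms are available for every $R$.
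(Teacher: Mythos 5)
There is a genuine gap: you explicitly set aside the ``model computation in the parabolic region'' as the ``genuinely delicate part'' and never carry it out, and it is precisely there that the claimed power of $h$ has to be produced. Your plan --- pull $\p^l$ through the nested Cauchy transforms in the Neumann series defining $r_h$, trade each differentiated exponential $e^{\pm 2i\psi/h}$ for a factor $h^{-1}\p\Phi$ or $h^{-1}\overline{\p\Phi}$ (vanishing simply at $z_0$), then localize to $|z-z_0|\lesssim h^{1/2}$ in Morse coordinates and rescale --- is plausible in outline, but it ends up demanding a direct estimate of a rescaled oscillatory integral whose singular kernels $1/(z-w)$ meet the unique critical point on the diagonal. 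Absent that estimate the proof is not complete. There is also a smaller issue you would need to address: the paper's $\op_\psi^{-1}=\mathcal{R}\,\op^{-1}e^{-2i\psi/h}\mathcal{E}$ involves an extension and a restriction, so $\p^l$ does not literally commute through it, and once $l\geq 2$ the identity $\op\,\op^{-1}=\mathrm{Id}$ only removes one transform --- iterating produces derivatives of $s_h$ and of the exponential mixed together, and these would all have to be tracked through the Neumann series.

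The paper avoids all of this. It proves the statement by an elementary induction on $l$ that reduces to the already-established $l=0$ case (Proposition~\ref{prop:correct_decay_single_rh}): a single integration by parts
\[
\int e^{4i\psi/h} f\,\p^l r_h \;=\; -\int e^{4i\psi/h}\bigl(4ih^{-1}(\p\psi)f + \p f\bigr)\,\p^{l-1} r_h,
\]
splits the integrand into a piece whose degree went up by one (paying for the $h^{-1}$) and a piece whose degree went down by one, and the inductive hypothesis applied to each gives exactly the stated exponent. The entire weight of the analysis --- the $h$-homogeneous calculus, Lemma~\ref{lem:expansion}--\ref{lem:expansion3}, the treatment of the Neumann series --- is concentrated in Proposition~\ref{prop:correct_decay_single_rh}, and the derivatives of $r_h$ never need to be examined directly. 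Adopting this reduction would let you discard the entire unresolved ``main obstacle'' step.
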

%
Here we have defined the degree of a function $f$ to be $\deg(f)=k+l$ if $f$ has an expansion $Cz^k\overline z^l+O(\abs{z}^{k+l+1})$ at the critical point, and $\lfloor\ccdot \rfloor$ is the floor function. Corollary \ref{cor:correct_decay_partial_rh} yields  \eqref{eq:example_error_integral} directly. The proof of the corollary uses the observation that $\overline \p_{\psi}^{\s -1}f$ has a certain expansion in $h$ depending on $\deg(f)$.

We call Corollary \ref{cor:correct_decay_partial_rh} and similar results in Section \ref{sec:CGOs} collectively as nonlinear CGO calculus.  We use these results numerous times, especially when considering the third linearization of the minimal surface equation. The corollary, and in fact the whole Section \ref{sec:CGOs}, are written independently of our application. We expect the nonlinear CGO calculus to have other applications in inverse problems for nonlinear equations in dimension $2$. 

After recovering the scalar second fundamental form and other unknowns appearing in the second linearization (up to the conformal mapping), we proceed to consider third linearization of the minimal surface equation. The third linearized equation is quite complicated and we only refer to Lemma \ref{Lem:Integral identity_3rd} for it here. By using stationary phase and nonlinear CGO calculus, we are ultimately able to recover the conformal factor from the third linearization. This finishes the proof of Theorem \ref{thm:main}.

\subsection{Previous literature}
The literature on inverse problems for nonlinear partial differential equations is extensive. Without doing a full review of the state of the field, we would like to mention here some early and recent works, to place our paper in historical context.
There is also a relation between our work and the AdS/CFT duality from physics. References to AdS/CFT duality are given in the next subsection below. 

A common technique for addressing inverse problems for nonlinear equations has been the so called second order linearization method. There one uses data of the form $\epsilon f$, where $\eps$ is small and $f$ a source or boundary value, and then uses the first and second order derivatives of the equation in $\epsilon$ in order to obtain separate information on the linear and nonlinear coefficients. This approach dates back to a work of Isakov and Sylvester in \cite{isakov1994global}, where the authors considered the equation
\[
-\Delta u +F(x,u)=0 
\]
on a Euclidean domain of dimension greater than or equal to three. Subject to certain constraints, they prove uniqueness for the non-linear functions $F(x,u)$. In dimension two, a similar problem was solved by Isakov and Nachman in \cite{victorN}.
Other important early works are \cite{sun2004inverse, sun2010inverse} for semilinear elliptic equations, and \cite{sun1996,sun1997inverse} for quasilinear elliptic equations. In the present work we will use some results from \cite{lassas2018poisson}, in which the inverse problem for general quasilinear equations on Riemannian manifolds is studied.

There are also many works in the nonlinear hyperbolic setting. For such equations, the nonlinearity has been shown to aid in the task if solving the inverse problems. We mention here the seminal work of Kurylev, Lassas, and Uhlmann \cite{kurylev2018inverse}, for the scalar wave equation with a quadratic nonlinearity. By using the nonlinearity, they were able to prove that local measurements determine a globally hyperbolic $4$-dimensional Lorentzian manifold up to a conformal transformation. The inverse problem for the corresponding linear wave equation is open. 

The work \cite{kurylev2018inverse} can also be said to have introduced what is now usually called the \emph{the higher order linearization method} in the study of inverse problems for nonlinear differential equations. As the name implies, this method consists in taking multiple derivatives of the nonlinear equation in question, and the associated DN map, in the small parameter $\epsilon$, in order to inductively prove uniqueness for various coefficients. The works \cite{FO19,LLLS2019inverse, feizmohammadi2023inverse} introduced the higher order linearization method for the study of inverse problems of semilinear elliptic equations on $\R^n$ and Riemannian manifolds.

Among the many works that employ the higher order linearization method we mention  \cite{MR4052205, LLLS2021b}, where the partial data problem for semilinear elliptic equations is addressed, \cite{CFKKU, kian2020partial, CaNaVa, Carstea2020, CaKa, CaFe1, CaFe2, CaGhNa, CaGhUh}, which deal with inverse problems for quasilinear elliptic equations, and \cite{lai2020partial, liimatainen2022inverse, liimatainen2022uniqueness, harrach2022simultaneous, salo2022inverse} on various other topics in the study inverse problems for nonlinear elliptic equations. We mention that in \cite{liimatainen2022uniqueness}, $0$ is not necessarily a solution, as is the case for the minimal surface equation \eqref{eq:minimal_surface_general} unless $\Sigma$ itself is a minimal surface.

\subsubsection{Inverse problems for minimal surfaces}
Let us then mention works on inverse problems for minimal surfaces. To our knowledge the first work on the subject is \cite{Tracey}. It studied inverse problem closely related to the one we study in this paper. There it was proven that if a $3$-dimensional Riemannian manifold is topologically a ball and satisfies certain curvature and foliation assumptions, the areas of a sufficiently large class of $2$-dimensional minimal surfaces determine the $3$-dimensional Riemannian manifold. Especially they determined embedded minimal surfaces that are topologically $2$-dimensional disks from the areas. As explained before Corollary \ref{cor:exterior_problem} the areas of minimal surfaces determine the DN map of the minimal surface equation \eqref{eq:minimal_surface_general}. Consequently, the relation between our work and \cite{Tracey} is that we determine more general embedded minimal surfaces, instead of those satisfying the assumptions in \cite{Tracey}, from the knowledge of the areas of the minimal surfaces.

The papers \cite{nurminen1} and \cite{nurminen2} consider an inverse problem for the minimal surface equation, for hypersurfaces in a manifold of dimension $n=3$ or higher. By denoting $e$ the metric of $\R^n$, the metric in these works is assumed to have the  form 
$c(x)e$ in \cite{nurminen1} and $c(x)(\hat g\otimes e)$ in \cite{nurminen2}. Here $\hat g$ is a simple Riemannian metric  (see \cite{nurminen2} for the definition). The quantity being determined in each of these two papers is the conformal factor $c$.

Finally, in \cite{carstea2022inverse} an inverse problem for the minimal surface equation on Riemannian manifolds of the form $\Sigma\times\R$ was studied. Here $\Sigma$ is a smooth compact two dimensional Riemannian manifold with metric $g$, and the metric $\overline g$ of $\Sigma\times\R$ is of the product form $\overline g = ds^2+g_{ab}(x)dx^adx^b$. 
It is shown in \cite{carstea2022inverse} that the DN map of the minimal surface equation determines $\Sigma$ up to an isometry. 
The current paper can be seen to be a generalization of \cite{carstea2022inverse}, see Remark \ref{rem:generalization_of_earlier}. 

\subsection{Relation to the AdS/CFT correspondence}\label{sec:adscft}
One of the motivations of this work is its quite direct relevance to the AdS/CFT correspondence, or duality, in physics. This duality is a conjectured relationship between two kinds of physical theories proposed by Maldacena \cite{maldacena1999large}. According to the duality there is a correspondence between physics of a conformal field theory (CFT) and geometric properties of an Anti-de Sitter spacetime (AdS in short). The AdS is referred to as the \emph{bulk} and it has an asymptotic infinity considered to be its boundary. The CFT is assumed to live on the boundary. A remarkable feature of this duality is that it has been successfully used to reduce various complicated quantum mechanical calculations in CFT to easier differential geometrical problems in the bulk.  The mechanism of the duality is not clear in general, see e.g. \cite{van2009comments} for a discussion and examples of the duality.

A recently proposed mechanism by Ryu and Takayanagi \cite{PhysRevLett.96.181602, ryu2006aspects} for the correspondence is the equivalence between the entanglement entropies of a CFT and areas of minimal surfaces in an AdS. Entanglement entropy is roughly speaking the experienced entropy (i.e. state of disorder) of a physical system for an observer who has only access to a subregion of a larger space. 
In their proposal, the accessible subregion $A$ is a subset of the asymptotic infinity of the AdS.  The set $A$ determines a minimal surface $\Sigma$ in the AdS by the assignment that $\Sigma$ is the minimal surface anchored on the asymptotic infinity to the boundary of $A$. By the proposal, the entanglement entropy $S_A$ of a given set $A$ in the CFT is equal to $(4G)^{-1}\text{Vol}(\Sigma)$. Here $G$ is Newton's constant and $\text{Vol}(\Sigma)$ is the volume of $\Sigma$.

Ryu and Takayanagi were able to confirm their duality, $S_A=(4G)^{-1}\text{Vol}(\Sigma)$, in several nontrivial cases. Later on, the duality has been applied to extract the geometry of bulk manifolds from the knowledge of entanglement entropies of families of sets $A$, or equivalently volumes of minimal surfaces $\Sigma$. Recall that volumes of minimal surfaces determine the DN map of the minimal surface equation \eqref{eq:minimal_surface_general} as explained before Corollary \ref{cor:exterior_problem}. Consequently, we can consider this paper to study the extraction of the bulk geometry in the AdS/CFT duality by using the DN map.

In physics literature, the metric of the bulk has been extracted from the entanglement entropies of infinite strips, circular disks and ellipsoids $A$ in \cite{bilson2008extracting, bilson2011extracting,fonda2015shape, hubeny2012extremal, jokela2021towards} and annulus shaped sets $A$ in \cite{jokela2019notes}. These works assume that the bulks have strong symmetries and are asymptotically AdS. Due to the assumptions, the reconstructions of the works reduce to simpler problems and do not require the methods of the current paper. In the mathematics literature, the extraction process was studied under various assumptions in the already discussed work \cite{Tracey}. In the recent physics paper \cite{bao2019towards}, the authors argue how the bulk can be reconstructed from volumes of minimal surfaces of codimension $\geq 2$ in general. 
With natural modifications, we expect that the current paper (dealing with codimension $1$) can be used to give rigorous justification to some of the arguments in \cite{bao2019towards}. We have separated the error term analysis from the proof of Theorem \ref{thm:main} to make the proof more accessible to a broader readership.

More generally, the current paper introduces the recent higher order linearization method (discussed above) into the study of the AdS/CFT duality between entanglement entropies and minimal surfaces. 
It seems that the method has not been used in the physics literature in this context. The dual description of the method may provide new techniques for studying the CFT side of the duality.

\subsection{Organization of the paper}

In Section \ref{Section 2} we derive equation \eqref{eq:minimal_surface_general} and state a well posedness result. In Section \ref{ss2.4} we apply the higher order linearization method to our minimal surface equation and in Section \ref{ss2.5} we derive the integral identities corresponding to the second and third linearizations. In the first part of Section \ref{sec:CGOs} we recall the construction of complex geometric optics solutions (CGOs) introduced in \cite{guillarmou2011identification}, which depend on a small asymptotic parameter $h$. We then derive what we refer to as a ``nonlinear calculus for CGOs'', which provides estimates for certain integrals containing products remainder terms, their derivatives, and  functions with various types of vanishing at the origin. In Section \ref{Section_4} we plug in the CGO solutions constructed in the previous section into the integral identities derived in Section \ref{Section 2}. We then estimate the size of all terms containing remainders, as $h\to0$. Finally, in Section \ref{sec:proof_of_main_thm} we use the leading order terms in our integral identities to prove Theorem \ref{thm:main}. In the Appendix we collect some computations and lemmas. 

\subsection{Acknowledgments}
C.C. was supported by NSTC grant number 112-2115-M-A49-002. T.L. was supported by the Academy of Finland (Centre of Excellence in Inverse Modeling and Imaging, grant numbers 284715 and 309963).
The authors wish to thank Niko Jokela and Esko Keski-Vakkuri for helpful discussions of the relation of the work to the AdS/CFT correspondence and providing references.

\section{Preliminaries}\label{Section 2}
\subsection{Local well posedness}
We record a well posedness result for the minimal surface equation.
\begin{proposition}[Local well posedness]\label{prop:local_well_posedness}
 Assume that $\text{Tr}\s (g_0^{-1}\p_sg_0)=0$ on $\Sigma$, and that $0$ is not a Dirichlet eigenvalue of the linear operator $\Delta_g+\frac{1}{2}\p_s|_{s=0} \text{Tr}\s (g_s^{-1}\p_sg_s)$ on $\Sigma$, the minimal surface equation \eqref{eq:minimal_surface_general} is well posed in the following sense. 
There exist positive constants $\delta$ and $C$ such that for any Dirichlet data $f\in U_\delta=\{f\in  C^{2,\alpha}(\partial\Sigma): \norm{f}_{C^{2,\alpha}(\p \Sigma)}\leq \delta\}$ there exists a solution $u\in C^{2,\alpha}(\Sigma)$ such that $||u||_{C^{2,\alpha}(\Sigma)}\leq C||f||_{C^{2,\alpha}(\p\Sigma)}$. The solution is unique among those  that satisfy $||u||_{C^{2,\alpha}(\Sigma)}\leq C\delta$. The correspondences $f\to u$ and $f\to\partial_\nu u|_{\p\Sigma}$ are $C^\infty$ as maps from $U_\delta$ into $C^{2,\alpha}(\Sigma)$ and $C^{1,\alpha}(\p\Sigma)$, respectively.
\end{proposition}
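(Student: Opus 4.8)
The plan is to solve \eqref{eq:minimal_surface_general} by the inverse function theorem in H\"older spaces, with $u\equiv0$ as the reference solution. First I would fix a radius $\rho>0$ small enough that for every $u$ with $\norm{u}_{C^{2,\alpha}(\Sigma)}<\rho$ the graph point $(x,u(x))$ lies in the Fermi chart where $g(x,s)$ is defined and $\det g_u>0$ (note $1+\abs{\nabla u}^2_{g_u}\ge1$ automatically). On the ball $B_\rho\subset C^{2,\alpha}(\Sigma)$ I set
\[
 \Phi(u)=\bigl(Q(u),\,u|_{\p\Sigma}\bigr)\in C^{0,\alpha}(\Sigma)\times C^{2,\alpha}(\p\Sigma),
\]
where $Q(u)$ denotes the left-hand side of \eqref{eq:minimal_surface_general}. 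Since $Q(u)$ is assembled from $u,\nabla u,\nabla^2u$ and the smooth coefficient functions $g_{ab}(x,s)$, their inverse, determinant, $\p_sg_{ab}(x,s)$, traces, and square roots of quantities bounded below on $B_\rho$, by pointwise algebra and composition --- and since for a smooth $N$ the substitution operator $w\mapsto N(\ccdot,w(\ccdot))$ is a $C^\infty$ map of H\"older spaces over the compact $\Sigma$, while $C^{1,\alpha}(\Sigma)$ and $C^{0,\alpha}(\Sigma)$ are Banach algebras --- the map $\Phi$ is $C^\infty$ on $B_\rho$. Inserting $u\equiv0$ into \eqref{eq:minimal_surface_general}, the divergence term carries a factor $\nabla u$ and vanishes, and $f(0,0)=\tfrac12\tr(g_0^{-1}\p_sg_0)=0$ by hypothesis; hence $Q(0)=0$ and $\Phi(0)=(0,0)$.

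Next I would identify $D\Phi(0)$. Linearizing $Q$ at $u=0$, where $g_u=g$ and $1+\abs{\nabla u}^2_{g_u}=1$, the divergence term linearizes to the Laplace--Beltrami operator $\Delta_g$ (in the sign convention of the statement), the first term of $f$ is quadratic in $\nabla u$ and contributes nothing, and the second term of $f$ contributes $\tfrac12\p_s|_{s=0}\tr(g_s^{-1}\p_sg_s)\,v$, again using $\tr(g_0^{-1}\p_sg_0)=0$. Thus
\[
 D\Phi(0)v=\bigl(Lv,\,v|_{\p\Sigma}\bigr),\qquad L=\Delta_g+\tfrac12\p_s|_{s=0}\tr(g_s^{-1}\p_sg_s),
\]
which is exactly the operator in the hypothesis. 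The assumption that $0$ is not a Dirichlet eigenvalue of $L$ says the homogeneous Dirichlet problem for $L$ has only the trivial solution; since $L$ is a second-order elliptic operator whose difference from $\Delta_g$ is of order zero, the Fredholm alternative together with the Schauder estimates shows that for every $(F,h)\in C^{0,\alpha}(\Sigma)\times C^{2,\alpha}(\p\Sigma)$ the problem $Lv=F$ in $\Sigma$, $v=h$ on $\p\Sigma$, has a unique solution $v\in C^{2,\alpha}(\Sigma)$ with $\norm{v}_{C^{2,\alpha}(\Sigma)}\le C\bigl(\norm{F}_{C^{0,\alpha}(\Sigma)}+\norm{h}_{C^{2,\alpha}(\p\Sigma)}\bigr)$. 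So $D\Phi(0)$ is a Banach space isomorphism.

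With $\Phi(0)=(0,0)$ and $D\Phi(0)$ invertible, the inverse function theorem in Banach spaces yields neighborhoods $\mathcal U\subset B_\rho$ of $0$ and $\mathcal V$ of $(0,0)$ with $\Phi\colon\mathcal U\to\mathcal V$ a $C^\infty$-diffeomorphism. Choosing $\delta>0$ so that $(0,f)\in\mathcal V$ whenever $\norm{f}_{C^{2,\alpha}(\p\Sigma)}\le\delta$, the function $u_f:=\Phi^{-1}(0,f)$ solves \eqref{eq:minimal_surface_general} with boundary data $f$; the assignment $f\mapsto u_f$ is $C^\infty$ into $C^{2,\alpha}(\Sigma)$ as a restriction of $\Phi^{-1}$, and
\[
 \norm{u_f}_{C^{2,\alpha}(\Sigma)}=\norm{\Phi^{-1}(0,f)-\Phi^{-1}(0,0)}_{C^{2,\alpha}(\Sigma)}\le C\norm{f}_{C^{2,\alpha}(\p\Sigma)}
\]
with $C$ controlled by $\norm{D\Phi^{-1}(0,0)}$ after a further shrinking of $\delta$. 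Shrinking $\delta$ once more so that $\{\norm{u}_{C^{2,\alpha}(\Sigma)}\le C\delta\}\subset\mathcal U$, any solution $u$ with data $f\in U_\delta$ satisfying $\norm{u}_{C^{2,\alpha}(\Sigma)}\le C\delta$ lies in $\mathcal U$ and therefore equals $u_f$ by injectivity of $\Phi$, which is the claimed uniqueness. Finally $f\mapsto\p_\nu u_f|_{\p\Sigma}$ is the composition of $f\mapsto u_f$ with the map $u\mapsto\nu(\ccdot,u(\ccdot))^a\p_au|_{\p\Sigma}$; the latter is $C^\infty$ from $C^{2,\alpha}(\Sigma)$ to $C^{1,\alpha}(\p\Sigma)$ because $\nu(\ccdot,u(\ccdot))$ is obtained by solving the algebraic conditions $g_u(\nu,\nu)=1$ and $g_u(\nu,X)=0$ for $X\in T\p\Sigma$, hence depends smoothly on $g_{ab}(x,u(x))$, while $\p_au|_{\p\Sigma}\in C^{1,\alpha}(\p\Sigma)$.

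The only genuinely delicate points I expect are the verification that $Q$ and the normal-derivative trace are honestly $C^\infty$ between the H\"older spaces, together with the bookkeeping of the domain restrictions $\det g_u>0$ and $u$ remaining in the Fermi chart; once these are in place, the isomorphism property of $D\Phi(0)$ is a routine consequence of the non-eigenvalue hypothesis and standard Schauder theory, and no step presents a real obstacle.
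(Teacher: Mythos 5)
Your proof is correct and follows exactly the strategy the paper invokes by reference: the paper merely cites \cite[Section 2]{LLLS2019inverse} and \cite[Appendix B]{CFKKU}, both of which prove local well-posedness by the Banach-space implicit/inverse function theorem on H\"older spaces with the linearized operator shown to be an isomorphism via the non-eigenvalue hypothesis and Schauder theory, precisely as you do. Your linearization computation, including the role of $\text{Tr}(g_0^{-1}\p_s g_0)=0$ in making $Q(0)=0$ and in killing the non-$\p_s$ contribution to the zeroth-order term, matches the paper's stated linearized operator.
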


The proof follows along the arguments in \cite[Section 2]{LLLS2019inverse}, \cite[Appendix B]{CFKKU}. Note that the assumption $\text{Tr}(g_0^{-1}\p_sg_0)=0$ on $\Sigma$ is equivalent to the assumption that $u\equiv 0$ is a solution to the minimal surface equation \eqref{eq:minimal_surface_general}. Either of these assumptions are in turn equivalent to $\Sigma$ having zero mean curvature (see e.g. \cite[Section 2]{lassas2016calder} for formulas about mean curvature in Fermi-coordinates). The operator $\Delta_g+\frac{1}{2}\p_s|_{s=0} \text{Tr}(g_s^{-1}\p_sg_s)$ is the linearized operator of the minimal surface equation. In the context of minimal surfaces it is known as the stability operator (see \cite[Section 8]{colding2011course}) and the condition that $0$ is not its Dirichlet eigenvalue related to the stability of the minimal surface $\Sigma$. To see that $\Delta_g+\frac{1}{2}\p_s|_{s=0} \text{Tr}(g_s^{-1}\p_sg_s)$ is the stability operator, one can check that $\frac{1}{2}\p_s|_{s=0} \text{Tr}(g_s^{-1}\p_sg_s)=-\abs{A}^2-\text{\text{Ric}}(N,N)$, where $\abs{A}$ is the norm of the second fundamental form, $\text{Ric}$ is the Ricci curvature and $N$ normal vector field to $\Sigma$.

\subsection{Derivation of the minimal surface equation}
Let us derive the minimal surface equation \eqref{eq:minimal_surface_general}. This is the equation a minimal surface satisfies in Fermi-coordinates. For possible future references, we consider in this section $n$-dimensional minimal surfaces embedded in $n+1$-dimensional Riemannian manifolds, $n\geq 2$.

We assume that the minimal surfaces are given as graphs over a submanifold $\Sigma$. Let us choose Fermi coordinates for a neighborhood $N=I\times \Sigma$ of $\Sigma$. Here $I$ is an interval in $\R$ containing $0$.  In Fermi coordinates the metric is of the form
\begin{equation*}
 \overline g=ds^2 +g_{ab}(x,s)dx^adx^b,
\end{equation*}
where $g(x,s)$ is a Riemannian metric on $\Sigma$ for all $s$.  $N=I\times \Sigma$ for all $s$. Here and below we also use Einstein summation over repeated indices.   We consider $g(\ccdot,s)=g_s$ as $1$-parameter family of Riemannian metrics on $\Sigma$. We have $\dim(N)=n+1$ and $\dim(\Sigma)=n$.

Let 
\[
 F(x)=(u(x),x)
\]
be the graph of a function $u:\Sigma\to I\subset \R$. The volume form of the graph $Y$ of $F$ is given by the determinant of the induced metric on the graph $Y$. Coordinates for $Y$ are given by coordinates on $\Sigma$ by
\[
 x\mapsto (u(x),x).
\]
Let $(x^k)_{k=1}^n$ be the above coordinates on $Y$ and let $\p_k$ be the corresponding coordinate vectors. Y .
For simplicity, let us assume that $(x^k)_{k=1}^n$ are global coordinates on $\Sigma$. The general case when
$\Sigma$ is covered with finitely many coordinate charts can be considered using a suitable partition
of unity. Let us also denote by $h_{jk}$ the induced metric on $Y$:    
\[
 h_{jk}(x)=\overline g_{F(x)}(F_*\p_{x_j},F_*\p_{x_k}),
\]
where $j,k=1,\ldots, n$. Here we have that 
\[
 F_*\p_{x_j}=DF_j^a\p_a,
\]
where $a=0,1,\ldots, n$. Note that if $a\neq 0$, then $DF_j^a=\delta_j^a$. We also have $DF_j^0=\p_ju$. It follows that the induced metric on $Y$ reads
\begin{align*}
 h_{jk}&=DF_j^aDF_k^b g_{F(x)}(\p_a,\p_b)=DF_j^0DF_k^0 \s\overline g_{00}|_{F(x)}+\sum_{\alpha,\beta=1}^nDF_j^\alpha DF_k^\beta \overline g_{F(x)}(\p_\alpha,\p_\beta) \\
 &=\p_ju(x)\p_ku(x)+g_{jk}|_{F(x)}=\p_ju(x)\p_ku(x)+g(x,u(x))_{jk}.
\end{align*}

The volume of $Y$ is 
\[
 \text{Vol}(Y)=\int_\Sigma \sqrt{\det(h)}dx^1\wedge\cdots \wedge dx^n.
\]
Using the formula for $h_{jk}$ we have that
\[
 \det(h)=\det\Big(\nabla u\otimes \nabla u +g(x,u(x))\Big)=\det(g(x,u(x)))\det\Big(I+(g(x,u)^{-1}\nabla u)\otimes \nabla u\Big).
\]
By \cite[Lemma 1.1]{Ding}, we have 
\[
 \det\Big(I+(g(x,u)^{-1}\nabla u)\otimes \nabla u\Big)=1+(g(x,u)^{-1}\nabla u) \cdot \nabla u=1+\abs{\nabla u}_{g(x,u(x))}^2.
\]
Finally, the volume of $Y$ equals
\[
 \text{Vol}(Y)=\int_\Sigma \sqrt{1+\abs{\nabla u}^2_{g(x,u)}}\det(g(x,u))^{1/2}dx^1\wedge\cdots \wedge dx^n.
\]

Let us then compute the minimal surface equation. For that, we consider a variation
\[
 Y(u+tv):=\{(u(x)+tv(x),x): x\in \Sigma\} \subset N
\]
of the surface $Y$, where $v:\Sigma \to \R$ is a smooth function. We denote the volume of $Y(u+tv)$ by $\text{Vol}(u+tv)$. Then, 
\begin{align*}
 &\frac{d}{dt}\Big|_{t=0}\text{Vol}(u+tv)=\frac 12 \int_\Sigma \frac{\det(g(x,u))^{1/2}}{\sqrt{1+\abs{\nabla u}^2_{g(x,u)}}}\frac{d}{dt}\Big|_{t=0}\left(\abs{\nabla (u+tv)}^2_{g(x,u+tv)}\right) \\
 &+\int_\Sigma \sqrt{1+\abs{\nabla u}^2_{g(x,u)}} \frac{d}{dt}\Big|_{t=0}\det(g(x,u+tv))^{1/2}.
\end{align*}
We have
\begin{align*}
 &\frac{d}{dt}\Big|_{t=0}\abs{\nabla (u+tv)}^2_{g(x,u+tv)}=2g(x,u)^{-1}(\nabla u, \nabla v)+v\p_s(g^{-1})(x,u)(\nabla u,\nabla u) \\
\end{align*}
and
\begin{align*}
 &\frac{d}{dt}\Big|_{t=0}\det(g(x,u+tv))^{1/2}=\frac{1}{2}\det(g(x,u))^{1/2}\text{Tr}(g(x,u)^{-1}v\p_sg(x,u)).
\end{align*}
Thus
\begin{multline}\label{eq:first_var}
 \frac{d}{dt}\Big|_{t=0}\text{Vol}(u+tv)=\frac 12 \int_\Sigma \frac{\det(g(x,u))^{1/2}}{\sqrt{1+\abs{\nabla u}^2_{g(x,u)}}}\Big(2g(x,u)^{-1}(\nabla u, \nabla v)+\p_s(g^{-1})(x,u)(\nabla u,\nabla u)v\Big) \\
 +\int_\Sigma \sqrt{1+\abs{\nabla u}^2_{g(x,u)}}\frac{1}{2}\det(g(x,u))^{1/2}\text{Tr}(g(x,u)^{-1}\p_sg(x,u))v.
\end{multline}

We recall that if $Y$ is a minimal surface (in the variational sense), then $t = 0$ is a critical point of the map $t \mapsto \text{Vol}(u + tv)$ for all functions $v$ that vanish on the boundary. We also denote 
\[
 f(u,\nabla u)=\frac{1}{2}\frac{1}{(1+\abs{\nabla u}^2_{g_u})^{1/2}}(\p_sg_u^{-1})(\nabla u,\nabla u)+\frac{1}{2}(1+\abs{\nabla u}^2_{g_u})^{1/2}\text{Tr}(g_u^{-1}\p_sg_u),
\]
where we shorthanded $g_u(x)=g(x,u(x))$ and $\p_sg_u=\p_sg_s|_{s=u}$ etc.
It follows by integrating by parts that the minimal surface equation is
\begin{equation*}
 -\frac{1}{\det(g_u)^{1/2}}\nabla\cdot \left( g_u^{-1}\frac{\det(g_u)^{1/2}}{\sqrt{1+\abs{\nabla u}^2_{g_u}}} \right)\nabla u + f(u,\nabla u) =0. 
\end{equation*}
This is \eqref{eq:minimal_surface_general}. Here we used that the boundary term arising from integrating by parts in \eqref{eq:first_var} is
\begin{equation}\label{eq:boundary_term}
 \int_{\p \Sigma} \frac{v}{\sqrt{1+\abs{\nabla u}^2_{g(x,u)}}}(\nabla u, \nu)_{g(x,u)}dS_{g(x,u)},
\end{equation}
which is zero since $v|_{\p \Sigma}=0$. Here $(\ccdot,\ccdot)_{g(x,u)}$ and $dS_{g(x,u)}$ denote the  inner product and the volume form on the $\p\Sigma$ induced by the Riemannian metric $g(x,u)$ respectively..

\subsection{Variation of area and the exterior problem}\label{sec:areas_exterior_problem}
Let us then explain how the DN map is related to areas of minimal surfaces. Here a technical issue arises: If a surface $\Sigma$ is embedded in a Riemannian manifold $N$, it might be that there is no positive $\eps >0$ such that the domain $\Sigma \times (-\eps,\eps)$ of Fermi coordinates is contained in $N$. It can also be that the boundary $(\partial \Sigma) \times ((-\eps,\eps)\setminus \{0\})$ intersects the interior of the unknown manifold $N$. Therefore, considering minimal surfaces as solutions to the
minimal surface equation \eqref{eq:minimal_surface_general} in Fermi-coordinates is technically complicated. To
address this technical issues, we consider the situation in an extended manifold $\widetilde N$ that contains $N$. We always assume $\widetilde N$ to be geodesically complete. 

We explain the relation of areas of minimal surfaces and the DN map using $\widetilde N$. This section is mainly for motivational purposes and we choose to keep the exposition short. We consider the following problem:
%
%
%
%

\noindent\textbf{The exterior problem:} Let $(\Sigma,g)$ be a minimal surface. Do $\p \Sigma$ and the volumes of minimal surfaces $\Sigma'$ in the exterior manifold $\widetilde N$, whose boundaries satisfy
$\partial \Sigma'\subset \widetilde N\setminus N$, determine the isometry type of $(\Sigma,g)$?


We say that a minimal surface $\Sigma$ extends properly to a minimal surface $\widetilde \Sigma$ if $\widetilde \Sigma$ is a minimal surface in $\widetilde N$, $\Sigma\subset \subset \widetilde \Sigma$ and $0$ is not an eigenvalue of the first linearization of the minimal surface equation \eqref{eq:minimal_surface_general} on $\widetilde \Sigma$. 
Let  $\delta_0>0$.
We say that $\mathcal S$   is  a family of smooth  minimal surface  deformations of the surface $\widetilde \Sigma$ when $\mathcal S$  consists of functions  $\mathcal F:(-\delta_0,\delta_0)\times  \widetilde \Sigma\to \widetilde N$ such that $\mathcal F(s,x)$, $(s,x)\in (-\delta_0,\delta_0)\times \widetilde \Sigma$
is in $C^\infty((-\delta_0,\delta_0);C^3( \widetilde \Sigma))$ and all surfaces $\widetilde \Sigma(s)=\{\mathcal F(s,x):\ x\in \widetilde \Sigma\}$ are minimal surfaces in $\overline N$.

For the exterior problem, we record the following corollary of Theorem \ref{thm:main}. 
In the proof of the corollary we are going to refer to Lemma \ref{lem:F_id_infty}, which can be found from Section \ref{sec:proof_of_main_thm}. 

\begin{corollary}[Uniqueness result for the exterior problem]\label{cor:exterior_problem_repeat}
Let $(\widetilde N,\overline g)$ and $(N, \overline g|_N)$ be $3$-dimensional Riemannian manifolds with boundaries such that $N \subset\subset \widetilde N$.
Let $(\Sigma,g)$ be a $2$-dimensional minimal surface and assume it extends properly to a minimal surface $\widetilde \Sigma \subset \widetilde N$. Assume also that $ \widetilde \Sigma$ is topologically equivalent to a domain $\Omega\subset \R^2$ by a diffeomorphism $ \widetilde \phi: \widetilde \Sigma\to \Omega$. 
Let $\mathcal S$ be  is  a family of smooth  minimal surface  deformations of the surface $\widetilde \Sigma$.

Then, the knowledge of $\widetilde N\setminus N$, $\overline g|_{\widetilde N\setminus N}$, $\widetilde \Sigma\setminus \Sigma$, $\phi|_{\widetilde \Sigma\setminus \Sigma}$ and the functions $s\to (\partial \widetilde \Sigma(s),
\hbox{Vol}\s (\widetilde \Sigma(s))$, $s\in (-\delta_0,\delta_0)$, where  $\widetilde \Sigma(s)=\{\mathcal F(s,x):\s x\in \widetilde \Sigma\}$ and
$\mathcal F \in \mathcal S$,
determine 
  $(\Sigma,g)$ up to a boundary preserving isometry. The isometry also preserves the first fundamental form.
\end{corollary}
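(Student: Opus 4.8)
The plan is to deduce the corollary from Theorem~\ref{thm:main}, the only genuinely new point being that the area data encodes the Dirichlet-to-Neumann map of \eqref{eq:minimal_surface_general} on $\widetilde\Sigma$. \emph{Set-up.} I would first pass to Fermi coordinates around $\widetilde\Sigma$, which is legitimate since the Fermi neighbourhood of $\widetilde\Sigma$ lies in $\widetilde N$; in these $\overline g=ds^2+g_{ab}(x,s)dx^adx^b$ as in \eqref{prod_metric}, $\widetilde\Sigma=\{s=0\}$ with induced metric $g_0:=g(\ccdot,0)$, and the small minimal surfaces that are graphs $s=u(x)$ over $\widetilde\Sigma$ are exactly the small solutions of \eqref{eq:minimal_surface_general}; the properness of the extension supplies the non-degeneracy hypothesis of Proposition~\ref{prop:local_well_posedness} on $\widetilde\Sigma$. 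Since $\partial\widetilde\Sigma$ together with a (thin) neighbourhood lies in the known region $\widetilde N\setminus N$, the data $\overline g|_{\widetilde N\setminus N}$, $\widetilde\Sigma\setminus\Sigma$, $\widetilde\phi|_{\widetilde\Sigma\setminus\Sigma}$ determine the Fermi coordinates and the jet $\partial_s^k|_{s=0}g(\ccdot,s)$, $k=0,1,2$, along $\partial\widetilde\Sigma$, as well as $\widetilde\phi|_{\partial\widetilde\Sigma}$ and $g_0$ on all of $\widetilde\Sigma\setminus\Sigma$. Hence, given two manifolds compatible with the data, the induced surfaces $(\widetilde\Sigma,g_0^{(1)})$, $(\widetilde\Sigma,g_0^{(2)})$ are diffeomorphic to the same domain $\Omega\subset\R^2$ by boundary-fixing maps, are minimal, and satisfy $\partial_s^k|_{s=0}g^{(1)}=\partial_s^k|_{s=0}g^{(2)}$ to infinite order on $\partial\widetilde\Sigma$ for $k=0,1,2$, i.e.\ all hypotheses of Theorem~\ref{thm:main} apart from equality of the DN maps.

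\emph{From areas to the DN map.} For $\mathcal F\in\mathcal S$ and $\tau$ small I would write the minimal surface $\widetilde\Sigma(\tau)$ as the graph of a small function $u_\tau$ over $\widetilde\Sigma$; because the set $\partial\widetilde\Sigma(\tau)\subset\widetilde N\setminus N$ is known and the Fermi coordinates there are known, the boundary values $f_\tau:=u_\tau|_{\partial\widetilde\Sigma}$ and their $\tau$-derivatives are known. Differentiating the area functional in $\tau$ and using that each $u_\tau$ solves \eqref{eq:minimal_surface_general}, so that the interior term in the first variation \eqref{eq:first_var} vanishes and only the boundary term \eqref{eq:boundary_term} survives, gives
\[
\frac{d}{d\tau}\hbox{Vol}\s(\widetilde\Sigma(\tau))=\int_{\partial\widetilde\Sigma}\frac{(\partial_\tau u_\tau)\,(\nabla u_\tau,\nu)_{g_{u_\tau}}}{\sqrt{1+\abs{\nabla u_\tau}^2_{g_{u_\tau}}}}\,dS_{g_{u_\tau}},
\]
in which the only factor not already known is the boundary normal derivative $(\nabla u_\tau,\nu)_{g_{u_\tau}}|_{\partial\widetilde\Sigma}$, i.e.\ $\Lambda_{\overline g}f_\tau$ up to a known positive factor. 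Taking $\mathcal S$ rich enough (the argument requires, and one may take, all such deformations), for any $f\in U_\delta$ one realises deformations through the Dirichlet solution with boundary value $f$ and with prescribed smooth boundary velocity $\partial_\tau f_\tau$; letting $\partial_\tau f_\tau$ range over a dense set recovers $\Lambda_{\overline g}f/\sqrt{1+\abs{\nabla u_f}^2_{g_{u_f}}}$ pointwise on $\partial\widetilde\Sigma$, and since there $\abs{\nabla u_f}^2_{g_{u_f}}=(\hbox{a term determined by }f\hbox{ and }g_0|_{\partial\widetilde\Sigma})+(\Lambda_{\overline g}f)^2$, this determines $\Lambda_{\overline g}f$. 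This is Lemma~\ref{lem:minimal surfaces and the DN map}.

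\emph{Conclusion, and the main obstacle.} With $\Lambda_{\overline g}$ recovered on $\partial\widetilde\Sigma$, Theorem~\ref{thm:main} yields an isometry $F\colon(\widetilde\Sigma,g_0^{(1)})\to(\widetilde\Sigma,g_0^{(2)})$ with $F|_{\partial\widetilde\Sigma}=\mathrm{Id}$, $F^*g_0^{(2)}=g_0^{(1)}$ and $F^*\eta_2=\eta_1$. Because $g_0^{(1)}$ and $g_0^{(2)}$ coincide on the known exterior piece $\widetilde\Sigma\setminus\Sigma$ and $F$ fixes $\partial\widetilde\Sigma$, Lemma~\ref{lem:F_id_infty} forces $F$ to agree with the prescribed identification to first order along $\partial\widetilde\Sigma$; propagating this along geodesics issuing normally from the level sets of the boundary distance function (an isometry fixing a hypersurface with identity differential there is the identity on a normal neighbourhood, and one exhausts the connected set $\widetilde\Sigma\setminus\Sigma$) shows that $F$ equals that identification on all of $\widetilde\Sigma\setminus\Sigma$. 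In particular $F(\Sigma)=\Sigma$ with $F|_{\partial\Sigma}$ the prescribed boundary identification, so $F|_\Sigma$ is the desired boundary-preserving isometry of $(\Sigma,g_0^{(1)}|_\Sigma)$ onto $(\Sigma,g_0^{(2)}|_\Sigma)$, carrying the first fundamental form and $\eta_1$ to $\eta_2$. I expect the main obstacle to be the passage from areas to the DN map: one must genuinely recover the Dirichlet data $f_\tau$ of each deformed surface from the set $\partial\widetilde\Sigma(\tau)$, using the known exterior geometry, and then invert the nonlinear relation between the first variation of area and $\Lambda_{\overline g}f$; everything after that is Theorem~\ref{thm:main} plus the routine rigidity argument above.
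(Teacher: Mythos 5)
Your proof follows essentially the same route as the paper: recover the DN map of the minimal surface equation on $\widetilde\Sigma$ from the first variation of area (this is the content of Lemma~\ref{lem:minimal surfaces and the DN map}, which your paragraph ``From areas to the DN map'' rederives with a bit more detail on inverting $\mathcal N_{\overline g}$ to get $\Lambda_{\overline g}$), apply the proof of Theorem~\ref{thm:main} to obtain a boundary-fixing isometry $F$ of $(\widetilde\Sigma,g_0^{(1)})$ onto $(\widetilde\Sigma,g_0^{(2)})$, and then argue that $F$ is the identity on $\widetilde\Sigma\setminus\Sigma$ so that it restricts to the wanted isometry of $(\Sigma,g)$.

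The one place where you diverge from the paper is the last step: the paper invokes Lemma~\ref{lem:F_id_infty} to get that $F=\mathrm{Id}$ to \emph{infinite} order on $\p\widetilde\Sigma$ in boundary normal coordinates and then cites the proof of \cite[Theorem 3.3]{lassas2019conformal} (run with harmonic instead of conformal harmonic coordinates) for uniqueness of isometries agreeing to high order on an open boundary piece, whereas you propagate the (first-order) identity contact directly along normal geodesics into $\widetilde\Sigma\setminus\Sigma$. Your geodesic argument is the standard ``isometry fixing a hypersurface with identity differential is the identity on a normal collar'' argument, and it is sound near $\p\widetilde\Sigma$; it buys you a self-contained proof without the external citation. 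The paper's route via \cite{lassas2019conformal} is a bit more robust for the global step, since your ``exhaust the connected set $\widetilde\Sigma\setminus\Sigma$'' requires an honest continuation argument (you need at each step that both $p$ and $F(p)$ stay in the region where $g_0^{(1)}=g_0^{(2)}$ is known, plus connectedness of $\widetilde\Sigma\setminus\Sigma$) that you have gestured at rather than spelled out. Two small further remarks: Lemma~\ref{lem:F_id_infty} actually gives infinite-order agreement, not only ``first order'' as you write (this doesn't affect your argument, which only needs first order); and the paper is careful to restrict to a sub-family $\mathcal S_0$ of deformations whose boundary traces stay inside the Fermi neighbourhood before differentiating the volume, a point you implicitly assume when you write $\widetilde\Sigma(\tau)$ as a graph.
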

We also have an analogous corollary to Theorem \ref{thm:main2}. 
%
To prove Corollary \ref{cor:exterior_problem}, we record in the next lemma how volumes of minimal surfaces are related to DN map in a domain of Fermi coordinates.

\begin{lemma} \label{lem:minimal surfaces and the DN map}
Let $(\Sigma, g)$ be a Riemannian manifold with a boundary $\p \Sigma$ and $ N= \Sigma\times (-\delta,\delta)$, $\delta >0$.  Assume that $N$ is equipped with a Riemannian metric of the form $ds^2 +g_{ab}(x,s)dx^adx^b$ and that $\epsilon >0$ is so small that for all $h$ in the set
$\mathcal W_\epsilon=\{h\in C^\infty(\p \Sigma):\|h\|_{C^{2,\alpha}(\p  \Sigma)}<\eps\}$
the minimal surface equation \eqref{eq:minimal_surface_general} has a unique solution $u_h: \Sigma\to \R$ such that $\norm{u_h}_{C^{2,\alpha}(\Sigma)}\leq \eps$, and $\epsilon<\delta$. 

Let $Y(h)=\{(u_h(x),x):\ x\in \Sigma\}$ be the minimal surface with the boundary value $h$.
Then the boundary $\p \Sigma$, the metric
$\overline g|_{\p  \Sigma\times (-\delta,\delta)}$ and 
the volumes $ \text{Vol}\s (Y(h))$ of the minimal surfaces, given  for all
$h\in \mathcal W_\epsilon$, determine the values of Dirichlet-to-Neumann map for the equation \eqref{eq:minimal_surface_general}, that is, $\Lambda_{\overline g}(h)$
 for 
$h\in \mathcal W_\epsilon$. Moreover, Fr\'echet derivatives of the map $h\to  \text{Vol}\s (Y(h))$ 
to the order $k+1$ at $h=0$  determine the  Fr\'echet derivatives of the map $h\to \Lambda_{\overline g}(h)$
at $h=0$ to the order $k$.
%
%
\end{lemma}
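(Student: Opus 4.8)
\textbf{Proof proposal for Lemma \ref{lem:minimal surfaces and the DN map}.}

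The plan is to relate the first variation of the area functional to the Dirichlet-to-Neumann map through the boundary term already computed in \eqref{eq:boundary_term}, and then to propagate this relation to all orders in the boundary data by differentiating in the $C^\infty$-dependence guaranteed by Proposition \ref{prop:local_well_posedness}. Concretely, fix $h\in \mathcal W_\epsilon$ and a variation field $v\in C^{2,\alpha}(\Sigma)$, and consider the family of boundary data $h+tv|_{\p\Sigma}$ with solutions $u_{h+tv|_{\p\Sigma}}$. The key point is that the surfaces $Y(h+tv|_{\p\Sigma})$ are all minimal, so the Euler--Lagrange equation \eqref{eq:minimal_surface_general} holds; hence when we compute $\frac{d}{dt}\big|_{t=0}\text{Vol}(Y(h+tv|_{\p\Sigma}))$ using \eqref{eq:first_var}, the interior integrand is a total divergence and only the boundary term \eqref{eq:boundary_term} survives:
\[
 \frac{d}{dt}\Big|_{t=0}\text{Vol}(Y(h+tv|_{\p\Sigma}))=\int_{\p\Sigma}\frac{v}{\sqrt{1+\abs{\nabla u_h}^2_{g(x,u_h)}}}(\nabla u_h,\nu)_{g(x,u_h)}\,dS_{g(x,u_h)}.
\]
Now one observes that every factor on the right-hand side other than $v$ and $(\nabla u_h,\nu)_{g}$ is determined by $\overline g|_{\p\Sigma\times(-\delta,\delta)}$ together with $u_h|_{\p\Sigma}=h$ and $\p_\nu u_h|_{\p\Sigma}=\Lambda_{\overline g}(h)$ — indeed on $\p\Sigma$ the full gradient $\nabla u_h$ is built from the tangential derivative of $h$ and the normal derivative $\Lambda_{\overline g}(h)$, and the metric quantities $g(x,h(x))$, $dS$, $\nu$ on $\p\Sigma$ are known from $\overline g|_{\p\Sigma\times(-\delta,\delta)}$. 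Thus knowledge of the map $h\mapsto\text{Vol}(Y(h))$ on $\mathcal W_\epsilon$, hence of its derivative against arbitrary $v|_{\p\Sigma}$, determines $(\nabla u_h,\nu)_{g(x,u_h)}|_{\p\Sigma}$ as a functional on $\p\Sigma$, and from this one solves algebraically for $\Lambda_{\overline g}(h)=\p_\nu u_h|_{\p\Sigma}$ (the coefficient of the unknown normal derivative in $(\nabla u_h,\nu)_g$ is nonvanishing since $g$ is Riemannian).

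For the statement about Fréchet derivatives at $h=0$, I would expand everything in the small parameter: writing $h=\sum \eps_j f_j$ as in the higher order linearization discussion and using that $f\mapsto u_f$ and $f\mapsto \p_\nu u_f|_{\p\Sigma}$ are $C^\infty$ by Proposition \ref{prop:local_well_posedness}, the identity above becomes an identity between Taylor coefficients. Differentiating the displayed boundary identity $k+1$ times in the $\eps$-parameters at $\eps=0$ expresses the $(k+1)$-st derivative of $h\mapsto\text{Vol}(Y(h))$ in terms of the $\le k$-th derivatives of $h\mapsto u_h|_{\p\Sigma}$ (which vanish tangentially beyond the prescribed data) and the $\le k$-th derivatives of $h\mapsto\p_\nu u_h|_{\p\Sigma}=\Lambda_{\overline g}(h)$, plus lower-order metric data on $\p\Sigma$ which is assumed known; conversely, induction on $k$ lets one solve for the $k$-th derivative of $\Lambda_{\overline g}$ from the $(k+1)$-st derivative of the volume functional and the lower-order terms already recovered. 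The base case $k=0$ is exactly the value-level statement above (with $u_0\equiv 0$, so the leading boundary term is linear in the first linearization).

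The main obstacle I anticipate is purely bookkeeping rather than conceptual: one must check carefully that when the boundary identity is differentiated in $t$ (or in the $\eps_j$) the interior contributions really do all collapse, i.e. that differentiating the minimality condition does not reintroduce an uncontrolled interior integral — this works because $u_{h+tv|_{\p\Sigma}}$ is a critical point of the area functional \emph{for each} $t$, so $\frac{d}{dt}$ of the first variation against a \emph{fixed} test field $v$ vanishing on $\p\Sigma$ is again just a boundary term, and more generally the chain rule applied to $t\mapsto\text{Vol}(Y(h+tv|_{\p\Sigma}))$ only ever sees the boundary term \eqref{eq:boundary_term} evaluated at the moving solution. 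A second, minor technical point is to verify that the map $v|_{\p\Sigma}\mapsto \int_{\p\Sigma}(\cdots)v$ determines the integrand pointwise, which is immediate since $v|_{\p\Sigma}$ ranges over a dense set of $C^{2,\alpha}(\p\Sigma)$; and that the algebraic solve for $\p_\nu u_h$ from $(\nabla u_h,\nu)_g$ is well posed, which follows by decomposing $\nabla u_h$ on $\p\Sigma$ into its tangential and normal parts with respect to $g(x,u_h(x))$.
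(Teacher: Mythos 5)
Your proposal follows essentially the same route as the paper's proof: compute the first variation of the area functional at a minimal surface, observe that the minimality of $Y(h)$ forces all interior contributions to cancel so that only the boundary term \eqref{eq:boundary_term} survives (with the variation entering only through its boundary trace $w=v|_{\p\Sigma}$), use the arbitrariness of $w$ to recover the boundary flux, and then extract $\Lambda_{\overline g}(h)=\p_\nu u_h|_{\p\Sigma}$ from it. The only place where you go slightly beyond the paper is in spelling out the final algebraic inversion (the paper defines the nonlinear boundary map $\mathcal N_{\overline g}$ and says it ``consequently'' determines $\Lambda_{\overline g}$, whereas you note that one must solve $\p_\nu u\mapsto \p_\nu u/\sqrt{1+\abs{\nabla u}^2_g}$ for $\p_\nu u$, which is possible since that expression is monotone in $\p_\nu u$ with the tangential derivatives of $h$ and the metric on $\p\Sigma\times(-\delta,\delta)$ known); that is a useful clarification but not a different argument. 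One small imprecision to fix in your write-up: the interior integrand in \eqref{eq:first_var} is not itself a total divergence -- rather, after the integration by parts the interior terms cancel because they are precisely the Euler--Lagrange expression for the minimal surface equation and $u_h$ is a solution; the conclusion (only the boundary term survives) is correct, but the mechanism is the equation, not a divergence structure.
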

\begin{proof}

Let us define a non-linear boundary map $\mathcal{N}_g$ by
\begin{eqnarray*}
\mathcal{N}_{\overline g} (u|_{\p \Sigma})= \frac 1{\sqrt{1+\abs{\nabla u}^2_{g(x,u)}}}
 (\nu, \nabla u)_{g(x,u)}\bigg|_{\p \Sigma},
\end{eqnarray*}
where $u$ is the solution of the minimal surface equation \eqref{eq:minimal_surface_general}. Let $u_h$ be the solution of the minimal surface equation \eqref{eq:minimal_surface_general} with
boundary value  $u_h|_{\p\Sigma}=h$. By the calculations leading to \eqref{eq:boundary_term}, we see that 
%
the  volumes $ \text{Vol}(Y(h))$ of the minimal surfaces $Y(h)$
and their minimal surface variations $ \text{Vol}(Y(h+tw))$ determine
\begin{eqnarray*}
 \frac{d}{dt}\Big|_{t=0}\text{Vol}(Y(h+tw))
 &=&  \int_{\p \Sigma} \frac{w}{\sqrt{1+\abs{\nabla u}^2_{g(x,u)}}}(\nabla u, \nu)_{g(x,u)}dS_{g(x,u)}.
\end{eqnarray*}
Since the boundary $\p \Sigma$ and the metric $ \overline g|_{\p \Sigma\times (-\delta,\delta)}$ are known, and as $w\in C^\infty(\p \Sigma)$ is arbitrary, we see that $ \frac{d}{dt}\big|_{t=0}\text{Vol}(Y(h+tw))
$ with varying values of $w$ determine $\mathcal{N}_{\overline g} (h)$ and consequently also $\Lambda_{\overline g}$. Moreover, the above formula yields that the Fr\'echet derivatives of the map $h\to  \text{Vol}\s (Y(h))$ 
to the order $k+1$ at $h=0$  determine the  Fr\'echet derivatives of the map $h\to \Lambda_{\overline g}(h)$
at $h=0$ to the order $k$.
\end{proof}

\begin{proof}[Proof of Corollary \ref{cor:exterior_problem}]
Since $\widetilde N$ is complete, there exists a neighbourhood $\widetilde U$ of $\widetilde\Sigma$ in $\widetilde N$,
such that in  $\widetilde U$ there are  Fermi coordinates associated to $\widetilde\Sigma$. That is, there exists the Fermi coordinate map $\tilde \psi:\widetilde U\to \mathcal{D}:=\widetilde\Sigma \times (-\eps,\eps)$, for some $\eps>0$.

 Let $\mathcal S$ be   a family of smooth  minimal surface  deformations of the surface $\widetilde \Sigma$.
Let us consider a sub-family $\mathcal S_0$ of $\mathcal S$ that consists of functions 
$\mathcal F \in \mathcal S$ such that the surfaces  $\widetilde \Sigma(s)=\{\mathcal F(s,x):\ x\in \widetilde \Sigma\}$ satisfy $\mathcal F(s,x) \in \tilde \psi^{-1}(\p \widetilde \Sigma \times (-\eps,\eps))$ for 
all $x\in \p \widetilde \Sigma$  and $s\in (-\delta_0,\delta_0)$.
Let us next consider a function $\mathcal F \in \mathcal S_0$ and the minimal 
surfaces $\widetilde \Sigma(s)$  determined by $\mathcal F$. Then there is $\delta_1<\delta_0$  such that 
for $|s|<\delta_1$  the functions
$h_s=\mathcal F(s,\ccdot)|_{\p  \widetilde \Sigma}$  satisfy 
$\|h_s\|_{C^{2,\alpha}(\p \widetilde  \Sigma)}<\eps$, where $\eps$  is such that the minimal surface equation has
 a unique small solution with the Dirichlet boundary value $h_s$, see Proposition \ref{prop:local_well_posedness}. Moreover, by  Lemma \ref{lem:minimal surfaces and the DN map},
the functions  $s\to h_s$  and $s\to 
\hbox{Vol}(\widetilde \Sigma(s))$, $s\in (-\delta_1,\delta_1)$ determine the
 derivatives of the map $s\to \Lambda_{\overline g}(h_s)$ 
at $h=0$ to any order $k$.
 By the proof of Theorem \ref{thm:main}, these data determines $(\widetilde \Sigma, g)$ up to an isometry that preserves $\p \widetilde \Sigma$. The isometry also preserves the second scalar fundamental form.

 What is left is to show that the isometry restricts to an isometry of $\Sigma$. For this we use a uniqueness result for isometries. It follows from the proof of Theorem \ref{thm:main} and Lemma \ref{lem:F_id_infty} that the isometry agrees with the identity map of $\widetilde \Sigma \setminus \Sigma$ to infinite order on $\p\widetilde\Sigma$. Since by assumption $\widetilde \Sigma\setminus \Sigma$ and $\overline g|_{\widetilde N\setminus N}$ are known, also the induced metric on $\widetilde \Sigma\setminus \Sigma$ is known. By the proof of \cite[Theorem 3.3]{lassas2019conformal}, isometries agreeing to high order order on an open subset of boundary are unique. (For this, apply the proof of \cite[Theorem 3.3]{lassas2019conformal} with harmonic coordinates instead of conformal harmonic coordinates.) By the above facts, it follows that the isometry restricted to $\widetilde \Sigma \setminus \Sigma$ is unique and consequently the identity.  
 By injectivity it then follows that the isometry maps $\Sigma$ onto itself. By continuity the isometry is the identity on $\p \Sigma$. This proves the claim.  
\end{proof}

\subsection{Higher order linearization}\label{ss2.4}

In this section we discuss the higher order linearization method for the minimal surface equation on $(\Sigma,g)$. While we assume in this paper that $\Sigma$ is $2$-dimensional, the computations in this section hold in higher dimensions as well. We will derive the corresponding integral identities for the first, second and third order linearizations. 

Later we will see that the first order linearization can be used to determine a $2$-dimensional minimal surface $\Sigma$ up to a conformal transformation.  Second order linearization will used to recover $\p_s|_{s=0}g(x,s)$ in Fermi coordinates of $\Sigma$. The third order linearization, together what can be recovered by considering the first and second linearizations, will be used to recover the related conformal factor. 

For $j=1,\ldots,4$ let $\eps_j\in \R$ and $f_j\in C^{2,\alpha}(\p M)$ for some $0<\alpha<1$. Let us denote $\eps=(\eps_1,\eps_2,\eps_3,\eps_4)$. We consider boundary values $f=f_\eps$  of the form 
\begin{align}\label{f_epsilon}
	f_\eps:=\displaystyle \sum_{j=1}^4\eps_j f_j
\end{align}
for the minimal surface equation
\begin{equation}\label{eq:minimal_surf_sec2}
\begin{aligned}
\begin{cases}
 -\frac{1}{\det(g_u)^{1/2}}\nabla\cdot \left( g_u^{-1}\frac{\det(g_u)^{1/2}}{\sqrt{1+\abs{\nabla u}^2_{g_u}}} \right)\nabla u + f(u,\nabla u) =0, 
&\text{ in } \Sigma,
\\
u= f_\eps 
&\text{ on }\p \Sigma,
\end{cases}
    \end{aligned}
\end{equation}
in Fermi coordinates associated to $\Sigma\subset N$. Recall that in Fermi-coordinates $\overline g=ds^2 +g_{ab}(x,s)dx^adx^b$. 
Observe that $f_\eps \in U_\delta$ for sufficiently small $\epsilon$, so that by Proposition \ref{prop:local_well_posedness} the problem \eqref{eq:minimal_surf_sec2} is well-posed.

In this paper, we use the positive sign convention for the Laplacian. In local coordinates of $(\Sigma,g)$
\[
 \Delta_g u=-\nabla\cdot\nabla u= -\abs{g}^{1/2}\p_a\big(\abs{g}^{1/2}g^{ab}\p_b u\big).
\]
We denote by $\nabla$, $\Delta$, $\ccdot$ and $\abs{\ccdot}$ the corresponding covariant derivative, Laplacian, inner product and norm given be the metric $g$ if there is no change of confusion. We record the higher order linearizations of \eqref{eq:minimal_surf_sec2} at $\eps=0$, which corresponds to zero solution. We denote the first, second and third linearizations by
\begin{equation}\label{eq:not_for_lins}
 v^{j}:= \left.\frac{\p}{\p\epsilon_j}\right|_{\eps=0} u_f, \quad w^{jk}:= \left.\frac{\p^2}{\p\epsilon_j\p\epsilon_k}\right|_{\eps=0} u_f, \quad w^{jkl}:= \left.\frac{\p^3}{\p\epsilon_j\p\epsilon_k\p\epsilon_l}\right|_{\eps=0} u_f
\end{equation}
respectively. 

Next we compute the linearized equations that $v^{j}$, $w^{jk}$ and $w^{jkl}$ solve. For this, we set up some notation. 
Let us calculate the higher order linearization of the minimal surface equation. For this, for $l=1,2,\ldots$, we set up some notation. We denote
\begin{align*}
 d_u=\abs{g_u}^{1/2}, \quad h_u=\text{Tr}(g_u^{-1}\p_sg_u) \ \text{ and } \ k_u&=g_u^{-1}.
 \end{align*}
and $d=\abs{g_u}^{1/2}|_{u=0}$ and $d^{(1)}:=\p_u|_{u=0}\abs{g_u}^{1/2}$. 
Note that 
\begin{equation}\label{eq:first_der_of_d_zero}
 d^{(1)}=\Big(\frac{1}{2}\abs{g_u}^{1/2}\text{Tr}(g_u^{-1}\p_sg_u)\Big)\Big|_{u=0}=\frac{1}{2}dh=0.
\end{equation}
We also denote
\begin{align*}
h&=h_u|_{u=0},  \quad h^{(l)}_u=\left(\frac{\p}{\p u}\right)^lh_u, \quad h^{(l)}=\left(\frac{\p}{\p u}\right)^l\Big|_{u=0}h_u, \\
 k&=k_u|_{u=0}, \quad  k^{(l)}_u=\left(\frac{\p}{\p u}\right)^lk_u, \quad k^{(l)}=\left(\frac{\p}{\p u}\right)^l\Big|_{u=0}k_u 
\end{align*}
for $l=1,2,3$. 
Note that $k_u,k,k^{(l)}_u, k^{(l)}$ are symmetric $2$-tensor fields on $\Sigma$. 
With these notations, the minimal surface equation \eqref{eq:minimal_surface_general} can be written in local coordinates on $\Sigma$ as 
\begin{multline*}
 0=-\frac{1}{\det(g_u)^{1/2}}\nabla\cdot \left( g_u^{-1}\frac{\det(g_u)^{1/2}}{\sqrt{1+\abs{\nabla u}^2_{g_u}}} \right)\nabla u + f(u,\nabla u)  \\
 =-d_u^{-1}\s \nabla\cdot\big(k_ud_u(1+\abs{\nabla u}^2_{g_u})^{-1/2}\nabla u\big)+f(u,\nabla u), 
\end{multline*}
where
\[
 f(u,\nabla u)=\frac{1}{2}\frac{1}{(1+\abs{\nabla u}^2_{g_u})^{1/2}}k_u^{(1)}(\nabla u,\nabla u)+\frac{1}{2}(1+\abs{\nabla u}^2_{g_u})^{1/2}h_u. 
\]
Here $\nabla$ is defined with respect to $\R^n$ metric. 
Since $u\equiv 0$ is a solution, we have $f(0,0)=0$, which implies 
\begin{equation}\label{eq_h_equiv_0}
h=\text{Tr}(g^{-1}\p_s|_{s=0}g)=0. 
\end{equation}

Next we write the minimal surface equation as 
\begin{equation*}
 0=
 P^uu+f(u,\nabla u ), 
\end{equation*}
where $P^u$ is the partial differential operator given by
\[
 P^uF=-\frac{1}{\det(g_u)^{1/2}}\nabla\cdot \left( g_u^{-1}\frac{\det(g_u)^{1/2}}{\sqrt{1+\abs{\nabla u}^2_{g_u}}} \right)\nabla F. 
\]
We then have
\[
 P:=P^{u}|_{u=0}=\Delta_g.
\]
We will see that factoring the minimal surface equation in this way is beneficial for calculations.  We also denote
\begin{align*}
 &P_{j}^u=\left(\frac{\p}{\p \eps_j}P^u\right), && P_{jk}^u=\left(\frac{\p^2}{\p \eps_k \p \eps_j}P^u\right),  &&& P_{jkl}^u=\left(\frac{\p^3}{\p \eps_k \p \eps_j\p\eps_l}P^u\right) \\ 
 &P^j=\left(\frac{\p}{\p \eps_j}P^u\right)\Big|_{\eps=0}, && P^{jk}=\left(\frac{\p^2}{\p \eps_k \p \eps_j}P^u\right)\Big|_{\eps=0},  &&& P^{jkl}=\left(\frac{\p^3}{\p \eps_k \p \eps_j\p\eps_l}P^u\right)\Big|_{\eps=0} \\
 &u_j=\left(\frac{\p}{\p \eps_j}u\right), && u_{jk}=\left(\frac{\p^2}{\p \eps_k \p \eps_j}u\right), &&& u_{jkl}=\left(\frac{\p^3}{\p \eps_k \p \eps_j\p \eps_l}u\right).
\end{align*}
With this notation we have
\[
 v^{j}=u_j|_{u=0}, \quad w^{jl}=u_{jl}|_{u=0}, \quad w^{jkl}=u_{jkl}|_{u=0}
\]
We will also use for convenience the physicists' short hand notation where indices without a specified value in brackets are symmetrised over. For example
\[
 P^u_{(j}u_{k)}=P^u_{j}u_{k}+P^u_{k}u_{j}
\]
and
\[
 P^u_{(jl}u_{k)}=P^u_{jl}u_{k}+P^u_{jk}u_{l}+P^u_{kl}u_{j}.
\]

With the above notations, the equation for the first linearization is
\[
 0=P_j^uu+P^uu_j+\p_{\eps_j}f.
\]
The equation for the second linearization is
\[
 0=P_{jk}^uu+P_{j}^uu_{k} +P_k^uu_j +P^u u_{jk}+\p_{\eps_{jk}}f =P^u_{jk}u+P^u_{(j}u_{k)}+P^u u_{jk}+\p_{\eps_{jk}}f,
 \]
and for the third linearization it is 
\begin{multline*}
 0=P_{jkl}^uu+P_{jk}^uu_l+P_{jl}^uu_{k}+P_{j}^uu_{kl}+P_{kl}^uu_{j}+P_k^uu_{jl}+P^u_{l}u_{jk}+P^u u_{jkl}+\p_{\eps_{jkl}}f\\
 =P^u_{jkl}u+P^u_{(jl}u_{k)}+P^u_{(j}u_{kl)}+P^u u_{jkl}+\p_{\eps_{jkl}}f.
\end{multline*}

Note that 
\[
P_j^uu|_{u=0}=P_{jk}^uu|_{u=0}=P_{jkl}^uu|_{u=0}=0.
\]
Thus evaluating the linearizations at $u=0$, equivalently at $\eps=(\eps_1,\eps_2,\eps_3)=0$, we obtain the equation for the first
\[
  0=Pv_j+\p_{\eps_j}|_{\eps=0}f=(\Delta_g+h^{(1)}/2)v_j 
\]
second
\[
  0=P^{(j}v^{k)}+P w^{jk}+\p_{\eps_{jl}}|_{\eps=0}f=(\Delta_g+h^{(1)}/2)w^{jk}+P^{(j}v^{k)}+k^{(1)}(\nabla v^j,\nabla v^k)+\frac{1}{2}h^{(2)}v^jv^k
\]
and third
\begin{multline*}
  0=P^{(jl}v^{k)}+P^{(j}w^{kl)}+P w^{jkl}+\p_{\eps_{jkl}}|_{\eps=0}f \\
  =(\Delta_g+h^{(1)}/2)w^{jkl}+P^{(jl}v^{k)}+P^{(j}w^{kl)}+k^{(2)}(\nabla v^{(j},\nabla v^k)v^{l)}+k^{(1)}(\nabla v^{(j},\nabla w^{kl)}) \\
  +\frac{1}{2}g(\nabla v^{(j},\nabla v^k)v^{l)}h^{(1)} +\frac{1}{2}w^{(jk}v^{l)}h^{(2)}+\frac{1}{2}v^{j}v^kv^{l}h^{(3)} 
\end{multline*}
linearizations at $\eps=0$. 

Here we used that
\begin{equation*}
 \p_{\eps_j}|_{\eps=0}f=(1+\abs{\nabla u}^2_{u})^{1/2}h_u=\frac{1}{2}\p_{\eps_j}|_{\eps=0}h_u=\frac{1}{2}h^{(1)}v^j,
\end{equation*}
since $h=0$ by \eqref{eq_h_equiv_0}. Similarly, we used
\begin{equation*}
 \p_{\eps_{jl}}|_{\eps=0}f=\frac{1}{2}\p_{\eps_{jl}}|_{\eps=0}h_u=\frac{1}{2}h^{(2)}v^jv^l.
\end{equation*}
We also used 
\begin{multline}\label{eq:third_deriv_of_f}
 \p_{\eps_{jkl}}|_{\eps=0}f
=k^{(2)}(\nabla v^{(j},\nabla v^k)v^{l)}+k^{(1)}(\nabla w^{(jk},\nabla v^{l)})+\frac{1}{2}g(\nabla v^{(j},\nabla v^k)v^{l)}h^{(1)} \\
 +\frac{1}{2}w^{(jk}v^{l)}h^{(2)}+\frac{1}{2}v^{j}v^{k}v^{l}h^{(3)}+\frac{1}{2}w^{jkl}h^{(1)}.
\end{multline}
We have placed the calculation behind \eqref{eq:third_deriv_of_f} in Appendix \ref{appx:calculations}.

By collecting the results of the above calculations, we obtain:
\begin{lemma}[Higher order linearizations]\label{lem:high_ord_lin} Let $f$ be as in \eqref{f_epsilon}, and for $j,k,l\in \{1,\ldots,4\}$ let $v^{j}$, $w^{jk}$ and $w^{jkl}$ be as in \eqref{eq:not_for_lins}.

 \noindent\textbf{(1)} The first linearization $v^{j}$ 
 satisfies the equation 
 \begin{equation}\label{linear_eq}
	\begin{aligned}
		\begin{cases}
			(\Delta_g+h^{(1)}/2)v^j=0 
			& \text{ in } \Sigma,
			\\
			v^{j}=f_j
			&\text{ on }\p \Sigma.
		\end{cases}
	\end{aligned}
\end{equation}
 \noindent\textbf{(2)} The second linearization $w^{jk}$ 
 satisfies 
 \begin{equation}\label{2nd_lin_eq}
 \begin{aligned}
		\begin{cases}
  (\Delta_g+h^{(1)}/2)w^{jk}+P^{(j}v^{k)}+k^{(1)}(\nabla v^{j},\nabla v^{k})+\frac{1}{2}h^{(2)}v^{j}v^{k}= 0& \text{ in } \Sigma \\
  w^{jk}=0
			&\text{ on }\p \Sigma.
  		\end{cases}
	\end{aligned}
 \end{equation}

 \noindent\textbf{(3)} The third linearization $w^{jkl}$ 
 satisfies the equation
  \begin{equation}\label{3rd_lin_eq}
	\begin{aligned}
		\begin{cases}
			(\Delta_g+h^{(1)}/2)w^{jkl}+P^{(jl}v^{k)}+P^{(j}w^{kl)} \\
	\qquad	+k^{(2)}(\nabla v^{(j},\nabla v^{k})v^{l)}+k^{(1)}(\nabla v^{(j},\nabla w^{kl)}) \\
  \qquad\quad+\frac{1}{2}g(\nabla v^{(j},\nabla v^{k})v^{l)}h^{(1)} +\frac{1}{2}w^{(jk}v^{l)}h^{(2)}+\frac{1}{2}v^{j}v^{k}v^{l}h^{(3)}=0 
			& \text{ in } \Sigma. \\
			w^{jkl}=0
			&\text{ on }\p \Sigma,
		\end{cases}
	\end{aligned}
\end{equation}
\end{lemma}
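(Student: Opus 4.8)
\emph{Proof plan.} The plan is to produce the three linearized equations by differentiating the minimal surface equation in its factored form $0 = P^u u + f(u,\nabla u)$, together with the boundary condition $u|_{\p\Sigma} = f_\eps = \sum_{j=1}^4 \eps_j f_j$, in the parameters $\eps_j$, and then evaluating at $\eps = 0$, where $u\equiv 0$. That the derivatives $v^j$, $w^{jk}$, $w^{jkl}$ exist and that the equation may be differentiated term by term is precisely the $C^\infty$-dependence of $u = u_{f_\eps}$ on $\eps$ provided by Proposition~\ref{prop:local_well_posedness}. The boundary conditions $v^j|_{\p\Sigma} = f_j$ and $w^{jk}|_{\p\Sigma} = w^{jkl}|_{\p\Sigma} = 0$ then follow at once, since $f_\eps$ depends linearly on $\eps$.

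First I would apply $\p_{\eps_j}|_{\eps=0}$. Using $\p_{\eps_j}(P^u u) = P^u_j u + P^u u_j$ together with $P^u_j u|_{u=0} = 0$ and $P^u|_{u=0} = \Delta_g$, the first linearization collapses to $\Delta_g v^j + \p_{\eps_j}|_{\eps=0} f = 0$. To evaluate $\p_{\eps_j}|_{\eps=0} f$ I invoke $h = h_u|_{u=0} = 0$ from \eqref{eq_h_equiv_0} (equivalently, the minimality of $\Sigma$): the first summand $\tfrac12(1+\abs{\nabla u}^2_{g_u})^{-1/2}k^{(1)}_u(\nabla u,\nabla u)$ of $f$ is quadratic in $\nabla u$ and so contributes nothing at $\eps=0$, while the second summand $\tfrac12(1+\abs{\nabla u}^2_{g_u})^{1/2}h_u$ contributes $\tfrac12 h^{(1)} v^j$. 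This gives \eqref{linear_eq}.

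Next I would apply $\p^2_{\eps_j\eps_k}|_{\eps=0}$, using $\p^2_{\eps_j\eps_k}(P^u u) = P^u_{jk} u + P^u_{(j}u_{k)} + P^u u_{jk}$ and $P^u_{jk} u|_{u=0} = 0$, which reduces matters to $\Delta_g w^{jk} + P^{(j}v^{k)} + \p^2_{\eps_j\eps_k}|_{\eps=0} f = 0$. Here a little bookkeeping is needed for $\p^2_{\eps_j\eps_k}|_{\eps=0} f$: in the first summand of $f$ only the term in which both copies of $\nabla u$ are differentiated survives at $\eps=0$, and by symmetry of $k^{(1)}$ it produces $k^{(1)}(\nabla v^j,\nabla v^k)$; in the second summand the factor involving $\nabla u$ plays no role at $\eps=0$, leaving $\tfrac12 h^{(2)} v^j v^k + \tfrac12 h^{(1)} w^{jk}$. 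The term $\tfrac12 h^{(1)} w^{jk}$ then combines with $\Delta_g w^{jk}$ to restore the operator $\Delta_g + h^{(1)}/2$ on $w^{jk}$, yielding \eqref{2nd_lin_eq}. The third linearization is handled identically: differentiate $P^u u$ three times, discard $P^u_{jkl}u|_{u=0} = 0$, and substitute the expression for $\p^3_{\eps_j\eps_k\eps_l}|_{\eps=0} f$ recorded in \eqref{eq:third_deriv_of_f} (derived in Appendix~\ref{appx:calculations}); once more the $\tfrac12 h^{(1)} w^{jkl}$ term there merges with $\Delta_g w^{jkl}$ into $(\Delta_g + h^{(1)}/2)w^{jkl}$, and collecting the remaining terms, with the symmetrization convention $P^u_{(jl}u_{k)}$ and $P^u_{(j}u_{kl)}$ respected, gives \eqref{3rd_lin_eq}.

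The argument involves no essential obstacle: the only genuine computation is the collection of $\eps$-derivatives of $f$ at $\eps=0$, where one must track which monomials in $\nabla u$ and in $h_u$ survive after imposing $\nabla u|_{\eps=0}=0$ and $h=0$, and the combinatorics of the symmetrization at third order. A mild structural point worth flagging is that at each order the top linearization enters $\p_{\eps\cdots}|_{\eps=0} f$ only through a $\tfrac12 h^{(1)}$-multiple of itself, which is exactly what is required for the principal part to reassemble into the stability operator $\Delta_g + h^{(1)}/2$. All of these computations have already been carried out in the displays preceding the lemma and in Appendix~\ref{appx:calculations}, so the proof consists in assembling those identities.
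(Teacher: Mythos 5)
Your proposal is essentially the paper's own argument: differentiate the factored form $0 = P^u u + f(u,\nabla u)$ in $\eps$, evaluate at $\eps=0$ using $P^u_{j\cdots}u|_{u=0}=0$ and $P^u|_{u=0}=\Delta_g$, and compute the $\eps$-derivatives of $f$ at $\eps=0$ (using $h=0$ and $\nabla u|_{\eps=0}=0$, and at third order the Appendix computation \eqref{eq:third_deriv_of_f}). In fact your bookkeeping at second order is slightly more explicit than the display in the paper (which abbreviates $\p_{\eps_j\eps_l}|_{\eps=0}f$ to only the $\frac12 h^{(2)}v^jv^l$ piece, but evidently also carries along the $k^{(1)}(\nabla v^j,\nabla v^l)$ and $\frac12 h^{(1)}w^{jl}$ terms, as you do), and your observation that the $\frac12 h^{(1)}$-multiple of the top linearization coming from $\p_{\eps\cdots}f$ is precisely what reconstitutes the stability operator is exactly the mechanism the paper's computation relies on.
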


\subsection{Integral identities} \label{ss2.5}
Next we derive the corresponding integral identities for the second and third linearized equations.
		Let $v^m$ be solution to the first linearization
\[
 (\Delta_g+h^{(1)}/2)v^m=0
\]
with boundary value $f_m$.  Let us denote
 \[
  I_2=P^{(j}v^{k)}+k^{(1)}(\nabla v^{j},\nabla v^{k})+\frac{1}{2}h^{(2)}v^{j}v^{k}.
 \]
 Then, by \eqref{2nd_lin_eq} the second linearization $w^{jk}$ solves
 \[
  (\Delta_g+h^{(1)}/2)w^{jk}=-I_2.
 \]
Integration by parts yields
\begin{multline}\label{eq:2nd_lin_calc}
 \int_{\p \Sigma}f_m\p_\nu w^{jk}dS=\int_{\Sigma} v^m\Delta_{g} w^{jk}dV + \int_{\Sigma} \nabla v^m\cdot \nabla w^{jk}dV \\
 =-\frac{1}{2}\int_{\Sigma} v^mh^{(1)} w^{jk}dV-\int_\Sigma v^mI_2dV 
 + \int_{\p \Sigma}w^{jk}\p_\nu v^mdS- \int_{\Sigma}w^{jk} \Delta_{g} v^mdV \\
 =-\int_\Sigma v^m I_2dV. 
\end{multline}
In the last equality we used $v^m$ solves the first linearized equation, which canceled the integrals involving $w^{jk}$ over $\Sigma$. We also used that $w^{jk}|_{\p \Sigma}=0$.

Next we calculate  $-\int_\Sigma v^mI_2dV $. We have
\begin{equation*}
 P^j=-\frac{d}{d\eps_j}\Big|_{\eps=0}d_u^{-1}\nabla\cdot k_ud_u(1+\abs{\nabla u}^2_u)^{-1/2}\nabla =-d^{-1}\nabla\cdot k^{(1)}v^jd\nabla.
\end{equation*}
Here we used $d^{(1)}=0$. Using the $d^{-1}\nabla\cdot k d$ is the Riemannian divergence $\nabla^g\ccdot$ on $(\Sigma,g)$, we may write the operator $P^{j}$ as
\begin{equation}\label{eq:formula_for_Pj}
 P^j=-\nabla^g\cdot (v^jgk^{(1)}\nabla).
\end{equation}
%
Thus 
\begin{equation*}
 \int_\Sigma v^m P^jv^kdV
 =\int_\Sigma v^jk^{(1)}(\nabla v^m,\nabla v^k)dV - \int_{\p \Sigma} v^mv^j k^{(1)}(\nu,\nabla v^k)dS
\end{equation*}
and it follows that 
\begin{equation*}
 \int_\Sigma v^m P^{(j}v^{k)}dV
 =\int_\Sigma k^{(1)}(\nabla v^m,\nabla v^{(j})v^{k)}dV \\
 - \int_{\p \Sigma} v^m k^{(1)}(\nu,\nabla v^{(j})v^{k)}dS. 
\end{equation*}
By collecting the results of above calculations, we have proven:
\begin{lemma}[Integral identity for the second linearization]\label{Lem:Integral identity_2nd} Let $f_\eps$ be as in \eqref{f_epsilon}, and for $j,k,m\in \{1,2,3\}$ let $v^{j}$ and $w^{jk}$ be as in \eqref{eq:not_for_lins}.
	The integral identity for the second linearization is
	\begin{multline}\label{eq:second_integral_id}
	 \int_{\p \Sigma} f_m \s \p^2_{\eps_j \eps_k}\big|_{\epsilon=0} \Lambda (f_\epsilon) \, dS_g =\int_{\Sigma} v^m k^{(1)}(\nabla v^k,\nabla v^j\big)dV+\int_{\Sigma}v^k k^{(1)}(\nabla v^j,\nabla v^m\big)dV\\
 +\int_{\Sigma} v^j k^{(1)}(\nabla v^k,\nabla v^m\big)dV -\frac{1}{2}\int_{\Sigma} h^{(2)}v^jv^kv^mdV  \\
 - \int_{\p \Sigma} v^m k^{(1)}(\nu,\nabla v^{(j})v^{k)}dS,
	\end{multline}
	which holds for any $j,k,m\in \{1,2,3\}$.
\end{lemma}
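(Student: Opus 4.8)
The plan is to obtain the identity by pairing the second linearized equation \eqref{2nd_lin_eq} for $w^{jk}$ against the first linearized solution $v^m$ and integrating over $\Sigma$, carefully tracking the boundary contributions. First I would record that, since $u\equiv 0$ solves \eqref{eq:minimal_surface_general}, the DN map is smooth in $\eps$ by Proposition \ref{prop:local_well_posedness}, so $\p^2_{\eps_j\eps_k}|_{\eps=0}\Lambda(f_\eps)$ makes sense; by definition of $\Lambda$ and the chain rule this second derivative equals $\p_\nu w^{jk}|_{\p\Sigma}$, which is why the left-hand side of \eqref{eq:second_integral_id} is exactly $\int_{\p\Sigma}f_m\,\p_\nu w^{jk}\,dS$. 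This reduces matters to the purely PDE computation \eqref{eq:2nd_lin_calc}.

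Next I would carry out the integration by parts in \eqref{eq:2nd_lin_calc}: moving the Laplacian from $w^{jk}$ onto $v^m$, using Green's formula twice, and exploiting that $w^{jk}|_{\p\Sigma}=0$ together with the first linearized equation $(\Delta_g+h^{(1)}/2)v^m=0$ to cancel all interior terms involving $w^{jk}$ and the zeroth-order term $\tfrac12 h^{(1)}v^m w^{jk}$. This yields $\int_{\p\Sigma}f_m\,\p_\nu w^{jk}\,dS=-\int_\Sigma v^m I_2\,dV$, where $I_2=P^{(j}v^{k)}+k^{(1)}(\nabla v^j,\nabla v^k)+\tfrac12 h^{(2)}v^jv^k$ as in the statement preceding the lemma. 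The terms $-\int_\Sigma v^m k^{(1)}(\nabla v^j,\nabla v^k)\,dV$ and $-\tfrac12\int_\Sigma h^{(2)}v^jv^kv^m\,dV$ are already in the desired symmetrized form (the $k^{(1)}$ term appears symmetrized over $j,k$, which is automatic since $k^{(1)}$ is a symmetric $2$-tensor and both gradients are present).

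The remaining piece is to rewrite $-\int_\Sigma v^m P^{(j}v^{k)}\,dV$. Here I would use the explicit formula \eqref{eq:formula_for_Pj}, $P^j=-\nabla^g\cdot(v^j g k^{(1)}\nabla)$, which follows from $P^j=-d^{-1}\nabla\cdot(k^{(1)}v^j d\nabla)$ and the fact that $d^{-1}\nabla\cdot(k\,d\,\cdot)$ is the Riemannian divergence; the vanishing $d^{(1)}=0$ from \eqref{eq:first_der_of_d_zero} is what makes this clean. Integrating $\int_\Sigma v^m P^j v^k\,dV$ by parts moves one derivative off $v^k$, producing the interior term $\int_\Sigma v^j k^{(1)}(\nabla v^m,\nabla v^k)\,dV$ plus a boundary term $-\int_{\p\Sigma}v^m v^j k^{(1)}(\nu,\nabla v^k)\,dS$; symmetrizing over $j,k$ gives the two remaining interior integrals $\int_\Sigma v^k k^{(1)}(\nabla v^j,\nabla v^m)\,dV+\int_\Sigma v^j k^{(1)}(\nabla v^k,\nabla v^m)\,dV$ and the boundary term $-\int_{\p\Sigma}v^m k^{(1)}(\nu,\nabla v^{(j})v^{k)}\,dS$. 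Assembling everything, with the overall sign from $-\int_\Sigma v^m I_2\,dV$, produces \eqref{eq:second_integral_id} exactly.

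Since every step is a standard Green's identity together with the already-derived formulas \eqref{2nd_lin_eq}, \eqref{eq:formula_for_Pj} and \eqref{eq:first_der_of_d_zero}, there is no substantial obstacle here; the only thing requiring mild care is the bookkeeping of the three interior $k^{(1)}$-integrals and the single surviving boundary integral, in particular checking that the boundary terms generated at the two integration-by-parts steps for the $P^{(j}v^{k)}$ piece do not cancel but combine into the stated $-\int_{\p\Sigma}v^m k^{(1)}(\nu,\nabla v^{(j})v^{k)}\,dS$, and that no boundary term involving $\p_\nu v^m$ or $w^{jk}$ survives because $w^{jk}|_{\p\Sigma}=0$.
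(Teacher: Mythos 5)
Your proposal follows the paper's derivation exactly: reduce the left-hand side to $\int_{\p\Sigma} f_m\,\p_\nu w^{jk}\,dS$, pair the second linearized equation \eqref{2nd_lin_eq} against $v^m$ and integrate by parts (using $w^{jk}|_{\p\Sigma}=0$ and the first linearized equation for $v^m$) to obtain $\pm\int_\Sigma v^m I_2\,dV$, then use the divergence form $P^j=-\nabla^g\cdot(v^j g k^{(1)}\nabla)$ from \eqref{eq:formula_for_Pj} with one more integration by parts on the $P^{(j}v^{k)}$ piece before symmetrizing over $j,k$. This is precisely the computation the paper carries out in the displays \eqref{eq:2nd_lin_calc} through \eqref{eq:second_integral_id} preceding the lemma, so the approach is the same.
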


Next we turn to deriving the integral identity for the third linearization, which in full generality will be pretty complicated. We first give the identity and then derive it. For this, we denote
\begin{equation}\label{eq:HRB}
 \begin{split}
 H&=-\int_{\Sigma}v^{(j}v^{k} k^{(2)}(\nabla v^{l)},\nabla v^m) dV +\int_{\Sigma}v^m g(\nabla (d^{-1}d^{(2)}v^{(j}v^k),\nabla v^{l)})dV \\
 &\qquad-\int_\Sigma v^m k^{(2)}(\nabla v^{(j},\nabla v^{k})v^{l)}dV-\frac{1}{2}\int_\Sigma v^mv^{j}v^{k}v^{l}h^{(3)}dV,\\
 R&=-\int_{\Sigma}w^{(jk}k^{(1)}(\nabla v^{l)},\nabla v^m)dV-\int_\Sigma k^{(1)}(\nabla v^m,\nabla v^{(j})w^{kl)}dV \\
 &\qquad-\int_{\Sigma}v^mk^{(1)}(\nabla v^{(j},\nabla w^{kl)})dV-\frac{1}{2}\int_\Sigma v^mg(\nabla v^{(j},\nabla v^{k})v^{l)}h^{(1)}dV  \\
 &\qquad \qquad -\frac{1}{2}\int_\Sigma v^mw^{(jk}v^{l)}h^{(2)}dV,   \\
 B&=\int_{\p \Sigma} v^mv^{(j}v^k g(\nu,k^{(2)}\nabla v^{l)})dS+\int_{\p \Sigma} v^mw^{(jk} g(\nu,k^{(1)}\nabla v^{l)})dS\\
&\qquad-\int_{\p \Sigma} v^mg(\nabla v^{(j},\nabla v^k)\p_\nu v^{l)}dS+\int_{\p \Sigma} v^mv^{(j} k^{(1)}(\nu,\nabla w^{kl)})dS.
\end{split}
\end{equation}

The integral identity for the third linearization then is:
\begin{lemma}[Integral identity for the third linearization]\label{Lem:Integral identity_3rd} Let $f_\eps$ be as in \eqref{f_epsilon}, and for $j,k,l,m\in \{1,\ldots,4\}$ let $v^{j}$, $w^{jk}$ and $w^{wjk}$ be as in \eqref{eq:not_for_lins}.

	The integral identity for the third linearization is
	\begin{multline}\label{eq:third_integral_id}
	 \int_{\p \Sigma} f_m \s \p^3_{\eps_j \eps_k\eps_l}\big|_{\epsilon=0} \Lambda (f_\epsilon) \, dS_g = \int_{\Sigma}g(\nabla v^{j},\nabla v^k)g(\nabla v^{l},\nabla v^m) dV  \\
	 +\int_{\Sigma}g(\nabla v^{j},\nabla v^l)g(\nabla v^{k},\nabla v^m) dV+\int_{\Sigma}g(\nabla v^{l},\nabla v^k)g(\nabla v^{j},\nabla v^m) dV \\
	 +H+R+B,
	\end{multline}
	which holds for any $j,k,m\in \{1,\ldots,4\}$. Here $H$, $R$ and $B$ are as in \eqref{eq:HRB}.
\end{lemma}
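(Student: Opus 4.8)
The proof runs exactly parallel to that of Lemma \ref{Lem:Integral identity_2nd}, one order higher. Writing the third linearized equation \eqref{3rd_lin_eq} as $(\Delta_g+h^{(1)}/2)w^{jkl}=-I_3$, where $I_3$ collects all the remaining terms of \eqref{3rd_lin_eq}, I would pair this equation with the first linearized solution $v^m$ and integrate by parts twice, as in \eqref{eq:2nd_lin_calc}. The terms in which $w^{jkl}$ appears undifferentiated cancel because $v^m$ solves $(\Delta_g+h^{(1)}/2)v^m=0$, and the boundary contribution of $w^{jkl}$ vanishes since $w^{jkl}|_{\p\Sigma}=0$; together with Proposition \ref{prop:local_well_posedness} (which makes $\p^3_{\eps_j\eps_k\eps_l}|_{\eps=0}\Lambda(f_\eps)=\p_\nu w^{jkl}|_{\p\Sigma}$ meaningful) this gives
\[
 \int_{\p\Sigma} f_m\,\p^3_{\eps_j\eps_k\eps_l}\big|_{\eps=0}\Lambda(f_\eps)\,dS_g=-\int_\Sigma v^m I_3\,dV .
\]
Everything then reduces to rewriting $-\int_\Sigma v^m I_3\,dV$ in the claimed form.

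The key computation is the second $\eps$-derivative $P^{jl}$ of the operator $P^u$ at $\eps=0$, in the spirit of formula \eqref{eq:formula_for_Pj} for $P^j$. Differentiating $P^uF=-d_u^{-1}\nabla\cdot\big(k_ud_u(1+\abs{\nabla u}_u^2)^{-1/2}\nabla F\big)$ twice and using $d^{(1)}=0$ (see \eqref{eq:first_der_of_d_zero}) together with $\nabla u|_{\eps=0}=0$, the square-root factor contributes only through its second derivative, which equals $-g(\nabla v^{j},\nabla v^{l})$; the remaining pieces are proportional to $v^jv^l$ and carry $k^{(2)}$ and $d^{(2)}:=\p_u^2|_{u=0}\abs{g_u}^{1/2}$. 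Thus $P^{jl}=-\nabla^g\!\cdot\!\big(A^{jl}\nabla\big)$ with $A^{jl}$ a symmetric $2$-tensor whose leading part is $-g(\nabla v^{j},\nabla v^{l})\,g^{-1}$. Substituting this into $\int_\Sigma v^m P^{(jl}v^{k)}\,dV$ and integrating by parts once, the leading part produces precisely the three integrals $\int_\Sigma g(\nabla v^{j},\nabla v^{k})g(\nabla v^{l},\nabla v^{m})\,dV$ together with their $j,k,l$ permutations (the sign working out to a plus after the overall minus in $-\int v^m I_3$), while the $k^{(2)}$- and $d^{(2)}$-terms and the associated boundary terms are exactly the first three integrals of $H$ and the first and third boundary integrals of $B$ in \eqref{eq:HRB}.

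It remains to handle $\int_\Sigma v^m P^{(j}w^{kl)}\,dV$ using the established formula \eqref{eq:formula_for_Pj} for $P^j$ and integrating by parts once, which yields the $k^{(1)}(\nabla v^{m},\nabla v^{(j})w^{kl)}$-type and $v^m k^{(1)}(\nabla v^{(j},\nabla w^{kl)})$-type terms of $R$ and the remaining two boundary integrals of $B$. The terms $k^{(2)}(\nabla v^{(j},\nabla v^{k})v^{l)}$, $k^{(1)}(\nabla v^{(j},\nabla w^{kl)})$ and the three $h^{(1)},h^{(2)},h^{(3)}$ terms in $I_3$ are simply multiplied by $v^m$ and integrated, landing directly in $H$ and $R$. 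Finally I would symmetrize over $j,k,l$ and match each group against \eqref{eq:HRB}. The main obstacle is purely the bookkeeping in computing $P^{jl}$ explicitly — correctly isolating the $d^{(2)}$ contribution and tracking all boundary terms generated by the several integrations by parts — and making the signs and symmetrization brackets agree with \eqref{eq:HRB}; this is routine but error-prone, which is presumably why the analogous third-derivative computation \eqref{eq:third_deriv_of_f} is relegated to Appendix \ref{appx:calculations}.
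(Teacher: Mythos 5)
Your overall route — set $(\Delta_g+h^{(1)}/2)w^{jkl}=-I_3$, pair with $v^m$ so that the Green identity reduces to $\int_{\p\Sigma}f_m\,\p^3_{\eps_j\eps_k\eps_l}|_{\eps=0}\Lambda(f_\eps)\,dS=-\int_\Sigma v^m I_3\,dV$, then expand $P^{(jl}v^{k)}$ and $P^{(j}w^{kl)}$ via explicit formulas for $P^j$ and $P^{jl}$ and integrate by parts once more — is exactly the paper's approach.

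There is, however, one concrete slip in your characterization of $P^{jl}$. You write $P^{jl}=-\nabla^g\!\cdot\!(A^{jl}\nabla)$ with $A^{jl}$ a symmetric $2$-tensor "proportional to $v^jv^l$ … carrying $k^{(2)}$ and $d^{(2)}$" aside from the leading $-g(\nabla v^j,\nabla v^l)$-piece. This misses two things. First, the $d^{(2)}$-part is not a divergence-form operator: it is the first-order piece $-g\bigl(\nabla(d^{-1}d^{(2)}v^jv^l),\,\nabla\ccdot\bigr)$, as in \eqref{eq:formula_for_Pjk}. Second and more importantly, since $\p^2_{\eps_j\eps_l}\big|_{\eps=0}k_u=k^{(2)}v^jv^l+k^{(1)}w^{jl}$, the operator $P^{jl}$ also contains the term $-\nabla^g\!\cdot\!\bigl(w^{jl}gk^{(1)}\nabla\bigr)$; without it, the first integral in $R$, namely $-\int_\Sigma w^{(jk}k^{(1)}(\nabla v^{l)},\nabla v^m)\,dV$, is unaccounted for in your decomposition. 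Everything else you describe matches the paper's derivation, so this is a fixable bookkeeping omission rather than a flaw in the method, but it is the one place where the write-up as stated would not reproduce \eqref{eq:HRB}.
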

As we see, the integral identity in full generality is be pretty complicated. 
When we apply the integral identity in the proof of Theorem \ref{thm:main} two things will however happen: Firstly, the terms of $H$ will be of lower order when we use complex geometrical optics (CGOs) as the functions $v^j$. Secondly, the terms of $R$ will be recovered from the second linearization. Thus we will be able to neglect both the $H$ and $R$ terms in the proof. We will also be able to neglect the terms of $B$ as we assume boundary determination. We also note that the first three terms on the right hand side of \eqref{eq:third_integral_id} are not conformally invariant in conformal scalings. This fact will be used to recover a conformal factor in the proof.

The derivation of the integral identity is a straightforward but long calculation. A reader uninterested of this may jump directly to Section \ref{sec:proof_of_main_thm}. 
This time we denote
\begin{multline}\label{eq:I3}
 I_3=P^{(jl}v^{k)}+P^{(j}w^{kl)} +k^{(2)}(\nabla v^{(j},\nabla v^{k})v^{l)}+k^{(1)}(\nabla v^{(j},\nabla w^{kl)}) \\
  \qquad\quad+\frac{1}{2}g(\nabla v^{(j},\nabla v^{k})v^{l)}h^{(1)} +\frac{1}{2}w^{(jk}v^{l)}h^{(2)}+\frac{1}{2}v^{j}v^{k}v^{l}h^{(3)}
\end{multline}
so that $w^{jkl}$ solves
\[
 (\Delta_g+h^{(1)}/2)w^{jkl}=-I_3.
\]
By the same calculation as in \eqref{eq:2nd_lin_calc}, we obtain
\begin{equation}\label{eq:3rd_lin_calc}
 \int_{\p \Sigma}f_m\p_\nu w^{jkl}dS=-\int_\Sigma v^m I_3dV.
\end{equation}
We already calculated the formula for $P^j$ in \eqref{eq:formula_for_Pj}. The formula for $P^{jk}$ is 
\begin{multline}\label{eq:formula_for_Pjk}
 P^{jk}=
-k\nabla (d^{-1}d^{(2)}v^k v^l)\cdot \nabla-d^{-1}\nabla\cdot (k^{(2)}v^kv^ld \nabla)-d^{-1}\nabla\cdot (k^{(1)}w^{kl}d \nabla)
 \\
 +d^{-1}\nabla\cdot (kd g(\nabla v^k,\nabla v^l)\nabla)
\end{multline}
We have placed the calculation how the above formula is derived in Appendix \ref{appx:calculations}. 
Using again that $d^{-1}\nabla\cdot k d$ is the Riemannian divergence, we may write the operator $P^{jk}$ as
\begin{multline*}
  P^{jk}=-g(\nabla (d^{-1}d^{(2)}v^k v^l), \nabla \ccdot)
  -\nabla^g\cdot (v^jv^k gk^{(2)}\nabla) 
 -\nabla^g\cdot (w^{jk}gk^{(1)} \nabla) \\
  +\nabla^g\cdot (g(\nabla v^j,\nabla v^k)\nabla). 
\end{multline*}
 By integration by parts, we then have
\begin{multline}\label{eq:integral_P_jk}
 -\int_\Sigma v^mP^{(jk}v^{l)}=\int_{\Sigma}g(\nabla v^{(j},\nabla v^k)g(\nabla v^{l)},\nabla v^m) dV-\int_{\Sigma}v^{(j}v^{k} k^{(2)}(\nabla v^{l)},\nabla v^m) dV \\
  +\int_{\Sigma}v^m g(\nabla (d^{-1}d^{(2)}v^{(j}v^k),\nabla v^{l)})dV-\int_{\Sigma}w^{(jk}k^{(1)}(\nabla v^{l)},\nabla v^m)dV + \int_{\p \Sigma}B_1dS,
\end{multline}
where
\begin{multline}\label{eq:defs_for_B2}
B_1=\int_{\p \Sigma} v^mv^{(j}v^k g(\nu,k^{(2)}\nabla v^{l)})dS+\int_{\p \Sigma} v^mw^{(jk} g(\nu,k^{(1)}\nabla v^{l)})dS\\
-\int_{\p \Sigma} v^mg(\nabla v^{(j},\nabla v^k)\p_\nu v^{l)}dS.
\end{multline}
Here $\nu$ is the normal vector field on $\p \Sigma$ with respect to the metric $g$. Here also for example $g(\nu,k^{(1)}\nabla v^{l})=g_{ab}\nu^a(k^{(1)})^{bc}\p_cv^{l})$.

By \eqref{eq:formula_for_Pj} and integration by parts we have
\begin{equation}\label{eq:formula_for_Pj_2}
 -\int_\Sigma v^m P^{(j}w^{kl)}dV
 =-\int_\Sigma k^{(1)}(\nabla v^m,\nabla v^{(j})w^{kl)}dV \\
 + \int_{\p \Sigma} v^mv^{(j} k^{(1)}(\nu,\nabla w^{kl)})dS.
\end{equation}
Recall that
\begin{multline}\label{eq:I3_2}
   \int_\Sigma v^m I_3dV= \int_\Sigma v^mP^{(jk}v^{l)}+\int_\Sigma v^m P^{(j}w^{kl)} \\
 	+\int_\Sigma v^mk^{(2)}(\nabla v^{(j},\nabla v^{k})v^{l)}+\int_\Sigma v^mk^{(1)}(\nabla v^{(j},\nabla w^{kl)}) \\
   \qquad\quad+\frac{1}{2}\int_\Sigma v^mg(\nabla v^{(j},\nabla v^{k})v^{l)}h^{(1)} +\frac{1}{2}\int_\Sigma v^mw^{(jk}v^{l)}h^{(2)}+\frac{1}{2}\int_\Sigma v^mv^{j}v^{k}v^{l}h^{(3)}.
\end{multline}
With the notations for $H$, $R$ and $B$ in \eqref{eq:HRB}, and by \eqref{eq:integral_P_jk}, \eqref{eq:formula_for_Pj_2}  and  \eqref{eq:I3_2}, we then have 
\[
 -\int_\Sigma v^m I_3dV=\int_{\Sigma}g(\nabla v^{(j},\nabla v^k)g(\nabla v^{l)},\nabla v^m) dV+\int_\Sigma H dV+\int_\Sigma R dV+\int_{\p \Sigma} B dS.
\]
By \eqref{eq:3rd_lin_calc} we had 
\begin{equation*}
 \int_{\p \Sigma}f_m\p_\nu w^{jkl}dS=-\int_\Sigma v^m I_3dV.
\end{equation*}
We have obtained the integral identity for the third linearization \eqref{eq:third_integral_id}.

\section{Complex geometrics optics solutions and their calculus}\label{sec:CGOs}
In this section we recall the construction of complex geometrics optics solutions (CGOs) from \cite{guillarmou2011identification} and develop a calculus for estimating their remainder terms. The latter is needed as the dependence of the remainders is less explicit in the small parameter $h>0$ than it is for main part of the CGOs.
\subsection{Construction of CGOs}

To begin, we assume that our Riemann surface $\Sigma$ is compactly contained in the open surface $M$ which in turn is compactly contained in the open surface $\widetilde M$ whose closure is a surface with boundary. 

For $q\in C^\infty_c(M)$, we recall the construction of \cite{guillarmou2011identification} of complex geometrics solutions to
\[
 (\Delta +q)u=0
\]
 on the Riemann surface $ M$. We keep the presentation brief and refer to \cite[Section 2]{guillarmou2011identification} for details. For a summary about the Riemannian differential calculus on Riemannian surfaces we refer to \cite{guillarmou2011calderon}. The complexified cotangent bundle $\C T^*\widetilde M$ has the splitting
\[
 \C T^*\widetilde M= T^*_{1,0}\widetilde M \oplus T^*_{0,1}\widetilde M
\]
determined by the eigenspaces of the Hodge star operator $\star$. 
In local  complex coordinate $z$ the space $T^*_{1,0}\widetilde M$ is spanned by $dz$ and $T^*_{0,1}\widetilde M$ is spanned by $d\ol z$.  The invariant definitions of $\p$ and $\op$ operators are given as
\[
 \op:=\pi_{0,1}d \text{ and } \p:=\pi_{1,0}d.
\]
By \cite[Proposition 2.1]{guillarmou2011identification} there is a right inverse $\op^{-1}$ for $\op$ in the sense that
\[
 \op\s \s  \op^{-1}\omega=\omega \text{ for all } \omega\in C_0^\infty(\widetilde M,T_{1,0}^*\widetilde M)
\]
such that $\op^{-1}$ is bounded from $L^p(T_{1,0}^*\widetilde M)$ to $W^{1,p}(\widetilde M)$ for any $p\in (1,\infty)$. We have analogous properties for 
\[
 \op^*=-i\star \p: W^{1,p}(T_{0,1}^*\widetilde M)\to L^p(\widetilde M),
\]
which is the Hermitean adjoint of $\op$. In complex coordinate $z$, the operator $\op^*$ is just $\p$.

We define 
\[
 \op_\psi^{-1}:=\mathcal{R}\op^{-1}e^{-2i\psi/h}\mathcal{E} \text{ and } \op_\psi^{*-1}:=\mathcal{R}\op^{*-1}e^{2i\psi/h}\mathcal{E},
\]
{\color{black} where $\mathcal{E} : W^{l,p}(M) \to W_c^{l,p}(\widetilde M)$ an extension operator for some $\widetilde M$ compactly containing $M$ and $\mathcal R$ is the restriction operator}.
By \cite[Lemma 2.2 and Lemma 2.3]{guillarmou2011identification} we have for $p>2$ and $2\leq q\leq p$ the following estimates 
\begin{align}\label{eq:sobo_decay}
\begin{split}
 \norm{\overline \p_\psi^{-1}\omega}_{L^q(M)}&\leq C h^{1/q}\norm{\omega}_{W^{1,p}(M, T_{0,1}^*M)} \\
 \norm{\overline \p_\psi^{*-1}\omega}_{L^q(M)}&\leq C h^{1/q}\norm{\omega}_{W^{1,p}(M, T_{1,0}^*M)}.
 \end{split}
\end{align}
Moreover, there is $\eps>0$ such that 
\begin{align}\label{eq:sobo_decayL2}
\begin{split}
 \norm{\overline \p_\psi^{-1}\omega}_{L^2(M)}&\leq C h^{1/2+\eps}\norm{\omega}_{W^{1,p}(M, T_{0,1}^*M)} \\
 \norm{\overline \p_\psi^{*-1}\omega}_{L^2(M)}&\leq C h^{1/2+\eps}\norm{\omega}_{W^{1,p}(M, T_{1,0}^*M)}.
 \end{split}
\end{align}

The complex geometrics optics solutions (CGOs) we use are the same as in \cite{guillarmou2011identification}, but our notation is slightly different. The CGOs are of the form 
\[
 v=e^{\Phi/h}(a+r_h),
\]
where $\Phi=\phi+i\psi$ is holomorphic Morse function and $a$ is a holomorphic function defined on $\widetilde M$, cf. \cite[Proposition 3.1, Eq. 21]{guillarmou2011identification}. 
In particular, $v$ solves $(\Delta + q)v=0$ if and only if
\begin{align}\label{eq:equation_for_rh}
\begin{split}
r_h=-\overline{\p}_\psi^{-1}s_h,\  (1+\op_\psi^{*-1}q\overline\p_\psi^{-1})s_h=\op_\psi^{*-1}(qa).
 \end{split}
\end{align}
To obtain an explicit expression for $s_h$, we introduce
\begin{eqnarray}\label{def: Th}
 T_h:=-\op_\psi^{*-1}q\overline\p_\psi^{-1}.
\end{eqnarray}
and note that its formal transpose is given by 
\begin{eqnarray}
\label{adjoint of T}
T_h^t = e^{-2i\psi/h} \p^{*-1} qe^{2i\psi/h} \p^{-1}
\end{eqnarray}
(We note that $T_h$ is not exactly the operator $S_h$ in \cite{guillarmou2011identification}. The reason for this is that we use holomorphic amplitude $a$, whereas the construction in \cite{guillarmou2011identification} uses antiholomorphic amplitude.)
By the proof of \cite[Lemma 3.1]{guillarmou2011identification}, we have
\[
 \norm{T_h}_{L^r\to L^r}=O(h^{1/r}) \text{ and } \norm{T_h}_{L^2\to L^2}=O(h^{1/2-\eps}), 
\]
for any $0<\eps<1/2$.

Thus we may derive an explicit expression for $s_h$ from \eqref{eq:equation_for_rh} via Neumann series as
\begin{eqnarray}
\label{eq: def of sh}
 s_h=-\sum_{j=0}^\infty T_h^j\op_\psi^{*-1}(qa). 
 \end{eqnarray}
Consequently
\begin{eqnarray}
\label{eq: def of rh}
 r_h=-\overline{\p}_\psi^{-1}s_h=-  \overline{\p}_\psi^{-1}\sum_{j=0}^\infty T_h^j\op_\psi^{*-1}(qa).
\end{eqnarray}
By the estimates in the proof of \cite[Lemma 3.2]{guillarmou2011identification} there is $\eps>0$ such that 
\begin{equation}\label{eq:rh_L2_estim}
 \norm{s_h}_{L^2}+\norm{r_h}_{L^2}=O(h^{1/2+\eps}),
\end{equation}
and also for $r>2$
\begin{equation}\label{eq:rh_Lr_estim}
 \norm{s_h}_{L^r}+\norm{r_h}_{L^r}=O(h^{1/r}).
\end{equation}
{\color{black} Note that we can improve on \eqref{eq:rh_Lr_estim} by {\color{black}H\"older's inequality}: for any $2<r<\infty$, choose $r'>r$ and $0<\theta<1$ such that $1/r=\theta/2+(1-\theta)/r'$. Then
\begin{equation}
||s_h||_{L^r}\leq ||s_h||_{L^2}^\theta ||s_h||_{L^{r'}}^{1-\theta}=O(h^{1/r+\theta\epsilon}),
\end{equation}
with an identical estimate holding for $r_h$. It follows that 
\begin{equation}\label{eq:rh_Lr_interpolation_estimate}
 \norm{s_h}_{L^r}+\norm{r_h}_{L^r}=O(h^{1/r+\eps_r}).
\end{equation}
for any $r\geq 2$.
}{\color{black}
Making use of the fact that $\partial\bar\partial$ is a Calder\'on-Zygmund operator and apply the standard Calder\'on-Zygmund estimates, we also have that
\begin{equation}\label{eq:CZ_rh_norms}
||r_h||_{L^r},  ||\partial r_h||_{L^r}, ||\bar\partial r_h||_{L^r}=O(h^{1/r+\epsilon_r}),
\end{equation}
for any $r\in[2,\infty)$ and $\epsilon_r >0$ depending on $r$.
}
This completes the construction of the CGO $v$.

Next we construct another CGO. By changing the sign of $\Phi$ and taking complex conjugate we also have solutions of the form
\[
 e^{-\overline \Phi/h}(\overline a+\tilde r_h),
\]
where 
\[
 \tilde r_h=-\p_\psi^{-1}\sum_{j=0}^\infty \tilde T_h^j(\p_{\psi}^{*-1} (q a))
\]
with $\tilde T_h =  \p_{\psi}^{*-1}q\p_\psi^{-1}$

(Note that $\tilde T_h$ is not the same as $\overline T_h$ since we also changed the sign of $\Phi$.)

\subsection{Choices of solutions}\label{subsection:choices_of_solutions}
Let $z_0\in \Sigma$ be fixed. For later reference, we choose three solutions to be used in the context of the second linearization as
{\color{black}\begin{align}\label{eq:sols_for_second}
 \begin{split}
 v^1&=e^{\Phi_1/h}(a+r_h), \\
 v^2&=e^{\Phi_2/h}(a+r_h), \\
 v^3&=e^{\Phi_3/h}(\overline a+\tilde r_h),
 \end{split}
\end{align}}
where in local holomorphic coordinates centered at $z_0$ 
{\color{black}\begin{align*}
\begin{split}
\Phi_1&=\frac{1}{2}z^2+O(z^3) \text{ is holomorphic }, \quad \Phi_2=\Phi_1, \\
\Phi_3&=-\z^2+O(\z^3) \text{ is antiholomorphic}.
\end{split}
\end{align*}}
We also choose four solutions to be used in the third linearization as
    \begin{align}\label{eq:sols_for_4th}
    \begin{split}
    v_1&=v_3=e^{\Phi/h}(a+r_h), \\
    v_2&=v_4=e^{-\overline \Phi/h}(\overline a+\tilde r_h), 
    \end{split}
    \end{align}
where $\Phi$ is a holomorphic Morse function with critical point at $z_0$.  In both of the above cases, we take that the holomorphic function $a$ such that it has the expansion 
    \[
    a(z)=1+O(z^N)
    \]
    in local holomorphic coordinates centered at $z_0$, $N$ large, and that $a$ vanishes to high order at any other critical point.
This is possible by \cite[Lemma 2.2.4]{guillarmou2011calderon}.

\subsection{Calculus for the CGOs}
In this section we develop a calculus involving the remainder terms $r_h$ and $\tilde r_h$. When we apply the CGOs in our inverse problem, the contributions from the remainder terms are quite diverse. Dealing with this systematically requires us to develop an abstract calculus for these CGOs which could potentially have broader applications beyond our specific inverse problem.

\begin{definition}
 Let $z_0\in \Sigma$ and 
 $$f\in C^\infty(M\setminus \{z_0\}) \cup  C^\infty(M\setminus \{z_0\}; T^*_{0,1}M)\cup  C^\infty(M\setminus \{z_0\}; T^*_{1,0}M)$$
 with $\supp (f) \subset M$. Let $z$ be holomorphic coordinates on a neighborhood of $z_0$ with $z_0$ corresponding to $0$. We say that $f$ has \emph{degree} $\deg(f) = N \in \mathbb Z$ at $z_0$ if, when expressed in local coordinates, $f$ is a finite sum of terms of the form
 \[ f = 
  Cz^k\overline z^l dz+ O(\abs{z}^{k+l+1})
 \]
with $l + k = N$, and $C$ a constant.
\end{definition}
 
Let $\psi$ be the imaginary part of any Morse holomorphic function. For sections $f$ of $T^*_{0,1}M$, denote by $\frac{f}{\bar\partial \psi}$ to be the scalar valued function satisfying $\bar\partial \psi \left (\frac{f}{\bar\partial \psi}\right) = f$. The quantity $\frac{f}{\partial \psi}$ can be defined analogously for sections of $T^*_{1,0}M$.

We have the following lemma, which can be proved by following Prop 3.4 of \cite{imanuvilov2012partial}
\begin{lemma}
\label{lem: iuy modified}
If $f\in  C^\infty(M\setminus \{z_0\}; T^*_{0,1}M)$  $\deg(f) = 1$ at $z_0$ and $\psi$ has a Morse critical point at $z_0$ then
$$\bar\partial_\psi^{-1} f = he^{-2i\psi/h} \frac{i}{2} \frac{1}{\bar\partial\psi} f + o_{L^p}(h)$$
for all $p\in[1,\infty)$. 
\end{lemma}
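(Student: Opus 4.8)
The statement concerns the leading-order asymptotics as $h\to 0$ of $\bar\partial_\psi^{-1}f$ when $f$ is a $(0,1)$-form vanishing to first order at the Morse critical point $z_0$ of $\psi$. Recall $\bar\partial_\psi^{-1}f = \mathcal R\,\bar\partial^{-1}(e^{-2i\psi/h}\mathcal E f)$, so the content is a stationary-phase / Cauchy-transform computation: the oscillatory factor $e^{-2i\psi/h}$ concentrates the mass of $\bar\partial^{-1}(e^{-2i\psi/h}\cdot)$ near $z_0$, and we want to extract the first term of the expansion. First I would localize: write $f = \chi f + (1-\chi)f$ with $\chi$ a cutoff supported in a small holomorphic coordinate patch around $z_0$. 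On the support of $(1-\chi)f$ the phase $\psi$ has no critical point, so integration by parts (equivalently, the non-stationary phase estimate already implicit in the proof of \cite[Lemma 2.2]{guillarmou2011identification}) gives $\bar\partial_\psi^{-1}((1-\chi)f) = O_{L^p}(h^\infty)$, which is absorbed into the $o_{L^p}(h)$ error. So it suffices to work in a coordinate chart where $f = (cz + O(|z|^2))\,d\bar z$ with $c$ constant, and $\psi = \operatorname{Im}\Phi$ for a holomorphic Morse $\Phi$ with $\Phi(z) = \Phi(z_0) + \tfrac12 \Phi''(z_0)z^2 + O(z^3)$, $\Phi''(z_0)\neq 0$.

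Next I would carry out the model computation. The key identity is that $\bar\partial^{-1}$ applied to $e^{-2i\psi/h}\,\omega$ can be rewritten using $\bar\partial(e^{-2i\psi/h}) = -\tfrac{2i}{h}(\bar\partial\psi)\,e^{-2i\psi/h}$ — note $\psi$ is real so $\bar\partial\psi$ is the $(0,1)$-part and it is essentially $\overline{\partial\Phi/(2i)}$ — hence on sections, $e^{-2i\psi/h}g = -\tfrac{h}{2i}\frac{1}{\bar\partial\psi}\bar\partial(e^{-2i\psi/h} g) + \tfrac{h}{2i}\frac{1}{\bar\partial\psi} e^{-2i\psi/h}\bar\partial g$. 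Applying $\bar\partial^{-1}$ to the first term and using $\bar\partial^{-1}\bar\partial = \mathrm{Id}$ (modulo the cutoff, which contributes another $h^\infty$ term), one gets the leading term $\frac{h}{2i}\frac{1}{\bar\partial\psi}e^{-2i\psi/h}g$ plus a remainder of the form $\bar\partial^{-1}$ of something of size $O_{L^p}$ times $h$. Writing $\frac{f}{\bar\partial\psi}$ for the scalar $g$ with $f = (\bar\partial\psi) g$, this is exactly $he^{-2i\psi/h}\frac{i}{2}\frac{1}{\bar\partial\psi}f$ (since $\frac{1}{2i} = -\frac{i}{2}$, and the sign works out after tracking $\bar\partial(e^{-2i\psi/h}) = -\frac{2i}h(\bar\partial\psi)e^{-2i\psi/h}$). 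The hypothesis $\deg(f)=1$ is what makes $\frac{f}{\bar\partial\psi}$ a bounded (indeed $C^\infty$ near $z_0$) function: since $\psi$ is Morse at $z_0$, $\bar\partial\psi$ vanishes to exactly first order there, so dividing the first-order-vanishing $f$ by $\bar\partial\psi$ yields something bounded, and the product $he^{-2i\psi/h}\frac{i}{2}\frac{f}{\bar\partial\psi}$ is genuinely $O_{L^p}(h)$ with the stated leading behavior.

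For the error term I would estimate $\bar\partial^{-1}$ of the remainder in $L^p$. After the integration-by-parts step, the remainder is $\bar\partial^{-1}$ applied to $h\cdot e^{-2i\psi/h}\cdot(\text{a form of degree }\geq 0$ near $z_0$, smooth, compactly supported$)$; one more application of the non-stationary/stationary phase bound (or the crude bound $\|\bar\partial^{-1}\|_{L^p\to W^{1,p}}$ together with $\|e^{-2i\psi/h}\cdot\|_{L^p} = O(1)$, or better $O(h^{1/p})$ by concentration) shows this is $o_{L^p}(h)$ — in fact $O(h^{1+1/p})$, comfortably $o(h)$. I would also need to handle the term where $\bar\partial$ falls on the cutoff $\chi$: $\bar\partial\chi$ is supported away from $z_0$, so that piece is again non-stationary and contributes $O_{L^p}(h^\infty)$. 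Assembling the pieces gives $\bar\partial_\psi^{-1}f = he^{-2i\psi/h}\frac{i}{2}\frac{1}{\bar\partial\psi}f + o_{L^p}(h)$.

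\textbf{Main obstacle.} The routine parts are the localization and the algebraic manipulation of $\bar\partial(e^{-2i\psi/h})$; the delicate point is the error estimate near $z_0$ itself, where the phase $\psi$ is stationary and the naive integration by parts loses the gain. There one must use that $f$ vanishes to first order together with the precise rate of vanishing of $\bar\partial\psi$, so that after factoring $f = (\bar\partial\psi)\cdot(\text{bounded})$ the problematic denominator disappears and the remaining integral is a genuine (non-degenerate, since $\psi$ is Morse) stationary-phase integral of a bounded amplitude, giving the extra power of $h$. Getting the $L^p$ (rather than pointwise) statement uniformly, and confirming the error is $o(h)$ and not merely $O(h)$, is where I would expect to spend the most care; this is precisely the kind of estimate handled in \cite[Prop.\ 3.4]{imanuvilov2012partial}, which the lemma's statement already points to as the template.
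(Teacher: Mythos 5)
The paper itself does not prove this lemma; it defers to \cite[Prop.~3.4]{imanuvilov2012partial} and \cite[Prop.~2.7]{imanuvilov2010calderon}, and the first step there is exactly the integration by parts you perform: writing $g := f/\bar\partial\psi$,
\[
 \bar\partial_\psi^{-1}f
 \;=\; h\,e^{-2i\psi/h}\,\tfrac{i}{2}\,\frac{f}{\bar\partial\psi}
 \;+\; C\,h\,\bar\partial_\psi^{-1}\!\Big(\bar\partial\,\frac{f}{\bar\partial\psi}\Big),
\]
modulo cutoffs. So your decomposition and the identification of the leading term are right. The gap is in the error estimate, and it sits exactly where you flag the ``main obstacle''. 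You describe the remainder amplitude $\bar\partial(f/\bar\partial\psi)$ as ``a form of degree $\geq 0$ near $z_0$, smooth, compactly supported'' and later as ``a bounded amplitude''. That is not correct. Since $\psi$ is Morse, $\bar\partial\psi = (c\bar z + O(|z|^2))\,d\bar z$ with $c\neq 0$; for a generic degree-one $f = (az+b\bar z + O(|z|^2))\,d\bar z$,
\[
 \frac{f}{\bar\partial\psi} \;=\; \frac{a}{c}\,\frac{z}{\bar z} + \frac{b}{c} + O(|z|),
\]
which is bounded but not continuous at $z_0$ (the monomial $z\bar z^{-1}$ has degree $0$ under the paper's convention, which permits negative exponents), and hence
\[
 \bar\partial\,\frac{f}{\bar\partial\psi} \;=\; -\frac{a}{c}\,\frac{z}{\bar z^{2}} + O(1) \;=\; O(|z|^{-1}),
\]
a degree $-1$ quantity that blows up at $z_0$, lies in $L^q$ only for $q<2$, and is in no $W^{1,p}$ with $p\geq 2$.

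This breaks the estimate you propose. The boundedness $\bar\partial^{-1}\colon L^p\to W^{1,p}$ for $p\geq 2$ never applies because the input is not in $L^p$; the Guillarmou--Tzou estimate~\eqref{eq:sobo_decay} requires a $W^{1,p}$ amplitude and is likewise inapplicable; and for $q<2$ the crude $L^q\to L^{q^*}$ bound only yields $O_{L^{q^*}}(h)$ for the remainder, not the required $o(h)$. The phrase ``$O(h^{1/p})$ by concentration'' names exactly the gain one needs, but that gain is the whole content of the cited proposition rather than a consequence of the mapping bounds you quote. The standard remedy (visible in this paper's Appendix Lemma~\ref{lem:log_lemma}, which does the analogous work for degree $0$, and in \cite[Prop.~2.7]{imanuvilov2010calderon}) is a two-scale split at $|z|\sim h^{1/2}$ around $z_0$: in the inner ball one takes the trivial bound, which wins because the $|z|^{-1}$ singularity is $L^q$-integrable for $q<2$ and the ball has measure $O(h)$; in the outer region one integrates by parts once more, the worsening singularity being tamed by $|z|\gtrsim h^{1/2}$. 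A softer alternative giving just $o_{L^p}(1)$: split $\omega:=\bar\partial(f/\bar\partial\psi)$ at an $h$-independent radius $\delta$, apply~\eqref{eq:sobo_decay} to the outer (smooth, $W^{1,p}$) piece to get $o_{L^q}(1)$ for each fixed $\delta$, note that the inner piece has $L^q$-norm $\lesssim\delta^{(2-q)/q}\to 0$ for $q<2$, and take $\limsup_{h\to 0}$ followed by $\delta\downarrow 0$. Either way, you must confront the $|z|^{-1}$ singularity of $\bar\partial(f/\bar\partial\psi)$ directly; as written, your argument assumes it away.
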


\begin{definition}
Let $f=f_h$ be an (a priori) $h$-dependent function defined on $M$. We now define a notion of \emph{$h$-degree of $f$}, which is set valued. We say that $f$ has \emph{$h$-degree} $(-\infty, r_0)\subset \R$ and write $\deg_h(f) = (-\infty, r_0)$  if for any $\chi \in C_c^\infty(M)$
 \[
  \int e^{4i\psi/h} f=O(h^r)
 \]
 for any $r\in (-\infty, r_0)$. We define $\deg_h(f) = (-\infty, r_0]$ if the above estimate holds for
any $r\in (-\infty, r_0]$. We define similarly if $f$ is $T_{0,1}^*M$ or $T_{1,0}^*M$ valued.

\end{definition}

\begin{remark}
If $M$ is an open subset of $\mathbb C$, then we may identify $T^*_{0,1}M$ and $T^*_{1,0}M$ valued functions, sections, with $\C$-valued functions.  All the estimates in this section would hold for functions instead of sections in this case. A reader less familiar with calculus on Riemannian surfaces can think about the computations in local coordinate charts and the sections as functions.
%
%
\end{remark}

From now on we assume that $z_0$ is fixed as the critical point of $\psi$. Furthermore, by using smooth cutoffs, we may assume that all sections are supported only in a neighbourhood of $z_0$. We need the following generalization of \cite[Proposition 2.7]{imanuvilov2010calderon} and \cite[Proposition 3.4]{imanuvilov2012partial}.  
\begin{lemma}\label{lem:expansion}
 Let $f\in C^\infty(M\setminus \{z_0\}; T^*_{0,1}M)$ with $\supp(f) \subset M$. Define $F^j$, $j=1,2,\ldots$ iteratively by
 \[
  F^1=\frac{i}{2}\frac{f}{\overline\p \psi}\in C^\infty(M\setminus \{z_0\})
 \]
 and
 \[
  F^{j+1}=\frac{i}{2}\frac{1}{\overline\p \psi}\overline\p F^j\in C^\infty(M\setminus \{z_0\}).
 \]
 
 {\color{black}

 \noindent a) If $\deg(f) = K$, $\chi \in C^\infty_c(M; T^*_{1,0}M)$, and $\rho(\ccdot,\ccdot)\in C^\infty(M\times M)$, then
$$\deg_h(f) =
    \begin{cases}
      (-\infty,   \lfloor K/2\rfloor+1 ], & \text{if } K \text{odd}\\
      (-\infty,  K/2 + 1) , & \text{if } K \text{even}
    \end{cases} $$
 and 
$$\left\|\int_M \rho(z,\ccdot) f(z)\wedge \chi(z) \right\|_{L^p(M)} \leq
    \begin{cases}
        h^{  \lfloor K/2\rfloor+1 }, & \text{if } K \text{odd}\\
        h^{K/2 + 1 - \epsilon} , & \text{if } K \text{even}
    \end{cases}$$
for all $\epsilon >0$ and $p\in [1,\infty)$.\\
}
 
 \noindent b) If  $\deg(f)=2K+1$, $K\in \mathbb N\cup\{0\}$ then 
 \begin{align}
  \op_{\psi}^{-1}(f)=e^{-2i\psi/h}\sum_{j=1}^{K+1}h^j F^j+ h^{K+1}\op^{-1}_{\psi}(\op F^{K+1}),
 \end{align}
where
 \[
   \op_{\psi}^{-1}(\op F^{K+1})=o_{L^p}(1)
 \]
 for all $p\in [1,\infty)$.\\
 \noindent c) If $\deg(f)=2K$, $K\in \mathbb N$, then 
 \begin{align}
  \op_{\psi}^{-1}(f)=e^{-2i\psi/h}\sum_{j=1}^{K} h^j F^j+ h^{K}\op^{-1}_{\psi}(\op F^{K}),
 \end{align}
where
 \[
   \op_{\psi}^{-1}(\op F^{K})=o_{L^2}(h^{1/2}).
 \]

We have analogous result for $\p_\psi^{-1}$. 

\end{lemma}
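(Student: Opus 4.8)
The plan is to prove Lemma \ref{lem:expansion} by an induction on the degree $K$, using Lemma \ref{lem: iuy modified} as the base case and the recursive structure of the functions $F^j$ to drive the induction. The key algebraic identity behind everything is that, for any smooth section $g$ of $T^*_{0,1}M$,
\[
 \op_\psi^{-1}(g) = e^{-2i\psi/h}\,\frac{i}{2}\frac{g}{\op\psi} - \frac{ih}{2}\,\op_\psi^{-1}\!\left(\op\!\Big(\tfrac{i}{2}\tfrac{g}{\op\psi}\Big)\right)\!\cdot(\text{const}) ,
\]
which comes from integrating by parts in the definition $\op_\psi^{-1}g = \mathcal R\,\op^{-1}(e^{-2i\psi/h}\mathcal E g)$, writing $e^{-2i\psi/h} = \tfrac{ih}{2}\,\op(e^{-2i\psi/h})/\op\psi$ near $z_0$ (valid since $\psi$ has a nondegenerate critical point at $z_0$, so $\op\psi$ vanishes only to first order there), and using $\op^{-1}\op = \mathrm{Id}$. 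This is exactly the mechanism of \cite[Prop.~3.4]{imanuvilov2012partial}; I would first record it carefully, paying attention to the singularity of $1/\op\psi$ at $z_0$ and checking that $\tfrac{g}{\op\psi}$ has degree $\deg(g)-1$ there, so that each application of $\tfrac{i}{2}\op(\,\cdot\,)/\op\psi$ lowers the degree by $2$ (one from $\op$, minus one from dividing). Hence $F^j$ has degree $K - 2(j-1)$ at $z_0$, and after $K+1$ steps (when $K=2K_0+1$ is odd) or $K$ steps (when $K$ is even) the iterate $\op F^{K+1}$ resp.\ $\op F^K$ is either smooth or has degree $\leq 1$, which is what makes the remainder term small.

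For part (b), with $\deg(f) = 2K+1$, I would iterate the identity above $K+1$ times: the first application produces the $h^1 F^1$ term plus an $h$ times $\op_\psi^{-1}(\op F^1)$ term, and since $\deg(\op F^1) = 2K+1$ as well... — more precisely $\op F^1$ has degree $2K$, but $F^2 = \tfrac i2 (\op F^1)/\op\psi$ has degree $2K-1$, so reapplying the identity to $\op_\psi^{-1}(\op F^1)$ peels off $h\,F^2$, and so on. After $K+1$ iterations one reaches $F^{K+1}$ of degree $1$, and $\op_\psi^{-1}(\op F^{K+1})$: here $\op F^{K+1}$ has degree $0$, i.e.\ is a nonvanishing section at $z_0$, and Lemma \ref{lem: iuy modified} (or rather its $\deg = 0$, stationary-phase variant) gives $\op_\psi^{-1}(\op F^{K+1}) = o_{L^p}(1)$ — this is the statement that $\op_\psi^{-1}$ applied to an $O(1)$ section with a nondegenerate phase tends to $0$, a consequence of the decay estimates \eqref{eq:sobo_decay} together with the cutoff/stationary-phase argument. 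For part (c), with $\deg(f)=2K$ even, the same iteration runs $K$ times, ending at $F^K$ of degree $2$; then $\op F^K$ has degree $1$, and by Lemma \ref{lem: iuy modified} directly $\op_\psi^{-1}(\op F^K) = he^{-2i\psi/h}\tfrac i2 \tfrac{\op F^K}{\op\psi} + o_{L^p}(h)$, which is $O_{L^2}(h^{1/2+\eps})$ by \eqref{eq:sobo_decayL2} — in particular $o_{L^2}(h^{1/2})$ after absorbing; actually one gets the cleaner bound $o_{L^2}(h^{1/2})$ by noting the leading term still carries the oscillatory factor and applying \eqref{eq:sobo_decayL2} once more, or by a direct $L^2$ stationary phase estimate.

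For part (a), the claim about $\deg_h(f)$ and the $L^p$ bound on $\int_M \rho(z,\cdot) f(z)\wedge\chi(z)$ should follow by combining the expansions in (b) and (c) with stationary phase. Writing $\int e^{4i\psi/h}\chi\wedge f$, I would insert the expansion of... — here it is cleaner to argue directly: by Taylor expanding $f$ at $z_0$ into homogeneous pieces $C z^k\z^l\,d\z$ with $k+l\le$ (anything below $K$ contributes lower order) plus an $O(|z|^{K+1})$ remainder, the pieces of degree $< K$ are handled inductively and the remainder contributes $O(h^{(K+1)/2 + \text{something}})$, while the top piece $z^k\z^l d\z$ with $k+l=K$ against $e^{4i\psi/h}$ is evaluated by the standard two-dimensional stationary phase formula: it gives $O(h^{\lceil (K+2)/2\rceil})$ when... — the parity split is precisely because stationary phase in $2$ real dimensions produces an asymptotic series in integer powers of $h$ starting at $h^1$, and monomials $z^k\z^l$ of odd total degree are odd under $z\mapsto -z$ so their leading stationary-phase coefficient vanishes, gaining an extra half-power; this is why odd $K$ gives the sharp integer exponent $\lfloor K/2\rfloor + 1$ (attained, hence the closed interval) while even $K$ gives $K/2+1$ only up to $\eps$ (the loss coming through the $o_{L^2}(h^{1/2})$-type remainders in (c), which are not quite integer-sharp). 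The $L^p(M)$ bound on $\int_M \rho(z,\cdot)f(z)\wedge\chi(z)$ as a function of the free variable is obtained by the same argument with the estimates applied uniformly in the parameter, using that $\rho\in C^\infty(M\times M)$.

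The main obstacle I expect is bookkeeping the singularity of $1/\op\psi$ at $z_0$ through the iteration: each $F^j$ is smooth away from $z_0$ but genuinely singular there, and one must verify at each step that multiplying by $i/(2\op\psi)$ after applying $\op$ keeps the degree arithmetic exactly right (so that the induction terminates at degree $0$ or $1$, not at a more singular object), and that the operators $\op_\psi^{-1}$, $\op^{-1}$ are applied only to sections in the correct $W^{1,p}$ or $L^p$ classes so that the estimates \eqref{eq:sobo_decay}--\eqref{eq:sobo_decayL2} genuinely apply. A secondary technical point is making the stationary-phase estimates in part (a) uniform in the extra parameter carried by $\rho(z,\cdot)$, and tracking the $\eps$-loss in the even case honestly; both are routine but need care. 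Once the degree arithmetic is pinned down, (b) and (c) are finite iterations of one identity plus Lemma \ref{lem: iuy modified}, and (a) is stationary phase.
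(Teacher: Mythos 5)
Your core mechanism for parts (b) and (c) — iterating the one-step Imanuvilov--Uhlmann--Yamamoto identity $\op_\psi^{-1}g = he^{-2i\psi/h}\tfrac{i}{2}\tfrac{g}{\op\psi} + h\op_\psi^{-1}\bigl(\op\bigl(\tfrac{i}{2}\tfrac{g}{\op\psi}\bigr)\bigr)$ and tracking the degree descent — is exactly the paper's approach. But your degree arithmetic is off by one, and this is not a bookkeeping nuisance: it leads you to invoke the wrong lemma for the final remainder.

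Concretely, since dividing by $\op\psi$ lowers degree by $1$ (as you yourself state: ``$\tfrac{g}{\op\psi}$ has degree $\deg(g)-1$'') and each subsequent application of $\tfrac{i}{2}\op(\cdot)/\op\psi$ lowers it by $2$, you get $\deg(F^j)=\deg(f)-2j+1$, not $\deg(f)-2(j-1)$ as you claim; e.g.\ $\deg(F^1)=\deg(f)-1$, $\deg(F^2)=\deg(f)-3$. With $\deg(f)=2K$ in part (c), $\deg(F^K)=1$ and $\deg(\op F^K)=0$, not $\deg(F^K)=2$ and $\deg(\op F^K)=1$ as you write. A degree-$0$ section is a nonvanishing but in general merely \emph{bounded, discontinuous} section at $z_0$, and Lemma~\ref{lem: iuy modified} — which requires degree~$1$ — does not apply to it. The paper handles exactly this case with a separate logarithmic estimate (Lemma~\ref{lem:log_lemma}), whose proof involves splitting with an $h^{1/2}$-scale cutoff and accepting a $|\log h|^{1/2}$ loss; this is precisely why the stated bound in (c) is only $o_{L^2}(h^{1/2})$ rather than the $O_{L^p}(h)$ that Lemma~\ref{lem: iuy modified} would give if your degree count were right. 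Your proposal has no substitute for this ingredient. Similarly in (b), $\op F^{K+1}$ has degree $-1$ (a genuine pole, singular like $|z|^{-1}$), not degree $0$; the paper absorbs this into the base case via \cite[Prop.~2.7]{imanuvilov2010calderon}, while your ``$\deg=0$ variant'' remark underestimates the singularity.

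For part (a), the paper does not go through (b), (c), stationary phase, or a Taylor/parity argument. It proves the assertion by a direct induction with a single integration by parts per step: the case $K=0$ is split with a cutoff at scale $h^{1/2}$ and gives $O(h|\log h|)$, the case $K=1$ gives $O(h)$ from an absolutely integrable integrand after one integration by parts, and the inductive step integrates by parts once, reducing $\deg$ by $2$. Your Taylor-expansion-plus-parity argument is plausible in spirit, but it glosses over the fact that $f\in C^\infty(M\setminus\{z_0\})$ can be genuinely non-smooth (even discontinuous at degree $0$) at the critical point, so the standard smooth stationary-phase expansion does not directly apply; the integration-by-parts route handles this robustly. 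You should also be careful that deriving (a) from (b), (c) (as you first suggest) is not how the paper's dependency graph runs: (a) is proved first and independently, (b) is independent of (a), and (c) uses (b).

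In summary: your Ansatz for (b), (c) is the right one, but the off-by-one in $\deg(F^j)$ causes you to miss that a separate degree-$0$ lemma with a logarithmic loss is needed for (c), and you are missing the direct integration-by-parts proof of (a).
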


\begin{proof}
Observe first that since $\psi$ is Morse, we have $\op \psi=c\ol z+O(\abs{z}^2)$, with $c\neq 0$. Thus 
\[
 \deg(F^1)=\deg(f/\op \psi)=\deg(f)-1,
\]
and we see by a simple induction argument that
\[
 \deg(F^j)=\deg(f)-2j+1.
\]

{\color{black}
\textbf{a)} 
Without loss of generality $\chi$ is supported in a single coordinate patch. If $K = 0$ we observe that in local coordinates,
\begin{eqnarray}
\label{split K = 0 case}
\int e^{i\psi/h} \chi\wedge f = \int e^{i\psi/h} \rho_h \chi f + \int e^{i\psi/h} (1-\rho_h) \chi f
\end{eqnarray}
where $\rho\in C^\infty_c(B_1(z_0))$ with $\rho=1$ on $B_{1/2}(z_0)$and $\rho_h(z) := \rho(z/h^{1/2})$. Trivially estimating the first term of \eqref{split K = 0 case} gives
\begin{eqnarray}
\label{split K = 0 case'}
\int e^{i\psi/h} \chi f =  \int e^{i\psi/h} (1-\rho_h) \chi f + O(h).
\end{eqnarray}
We write the first term as 
$$\int e^{i\psi/h}(1-\rho_h) \chi f = h\int e^{i\psi/h} \bar\partial^* \frac{ (1-\rho_h)\chi f}{\bar\partial \psi} = O(h|\log h|)$$
via a direct calculation in polar coordinates. So \eqref{split K = 0 case'} becomes
$$\int e^{i\psi/h} \chi f = O(h|\log h|)$$
which proves the case for $K = 0$.

If $K = 1$ then for any $\chi\in C^\infty_c(M)$ we have  
$$\int e^{i\psi/h} \chi f = h\int e^{i\psi/h} \bar\partial^* \frac{\chi f}{\bar\partial \psi}$$
with an absolutely integrable integrand which proves the case when $K = 1$.

For the inductive step, suppose the statement holds for all $K\leq j$ for some $j\in \mathbb N$ and assume $\deg(f) = j+1$. Then 
$$\int e^{i\psi/h} \chi f = h\int e^{i\psi/h} \bar\partial^* \frac{\chi f}{\bar\partial \psi}.$$
Observe that $\deg (\bar\partial^* \frac{\chi f}{\bar\partial \psi}) = j+1-2 = j-1$. The statement about $\deg_h(f)$ now follows by applying the induction hypothesis. The $L^p$ estimate now follows from the statement about $\deg_h(f)$ and the dominated convergence theorem.

}

\noindent \textbf{b)} We prove the claim by induction in $K$. By the proof of \cite[Proposition 2.7]{imanuvilov2010calderon} (see also \cite[Proposition 3.4]{imanuvilov2012partial})
\[
 \op_{\psi}^{-1}f=he^{-2i\psi/h} \frac{i}{2}\frac{f}{\op \psi} +h  \op^{-1}_{\psi}\left(\op \left(\frac{i}{2}\frac{f}{\op \psi}\right)\right).
\]
where $\op^{-1}_{\psi}(\op \frac{i}{2}\frac{f}{\op \psi})=o_{L^p}(1)$. Thus the claim holds for $K=0$.

Assume then that the claim holds for all $K_0\leq K-1$. That is
\begin{equation}\label{eq:induction_expansion}
 \op_{\psi}^{-1}f=e^{-2i\psi/h}\sum_{j=1}^{K_0+1}h^j F^j+ h^{K_0+1}\op^{-1}_{\psi}(\op F^{K_0+1}).
\end{equation}
Since 
\[
\deg(F^{K_0+1})=\deg(f)-2(K_0+1)+1
=2(K-K_0)\geq 2, 
\]
we have
 \[
  \op F^{K_0+1}=O(\abs{z}).
 \]
  Thus we may apply Lemma \ref{lem: iuy modified}
\begin{equation}\label{eq:induction_step}
  \op^{-1}_{\psi}(\op F^{K_0+1})=he^{-2i\psi/h} \frac{i}{2}\frac{1}{\op \psi} \op F^{K_0+1}+h\op^{-1}_{\psi}\left(\op \frac{i}{2}\frac{1}{\op \psi} \op F^{K_0+1}\right),
 \end{equation}
 where $\op^{-1}_{\psi}\left(\op \frac{i}{2}\frac{1}{\op \psi} \op F^{K_0+1}\right)=o_{L^p}(1)$. 
Since 
\[
 \frac{i}{2}\frac{1}{\op \psi} \op F^{K_0+1}=F^{K_0+2},
\]
combining with \eqref{eq:induction_expansion} and \eqref{eq:induction_step} we have
\[
 \op_{\psi}^{-1}(f)=e^{-2i\psi/h}\sum_{j=1}^{K_0+2}h^j F^j+ h^{K_0+2}\op^{-1}_{\psi}(\op F^{K_0+2})
 \]
 with
\[
 \op^{-1}_{\psi}(\op F^{K_0+2})=o_{L^p}(1).
\]
This completes the induction step.

\noindent\textbf{c)} Let us then assume that $\deg(f)=2K$. If $K=0$ we apply Lemma  \ref{lem:log_lemma} to get
\[
 \op_\psi^{-1}(f)=o_{L^2}(h^{1/2}),
\]
which proves the claim for $K=0$. 

Assume then that $K\geq 1$. Then, by part a) applied for $2K-1 = 2(K-1) + 1$ we have
\[
 \op_{\psi}^{-1}(f)=e^{-2i\psi/h}\sum_{j=1}^{K}h^j F^j+ h^{K}\op^{-1}_{\psi}(\op F^{K}).
\]
Since 
\[
 \deg(\op F^{K})=\deg(F^{K})-1=2K-2K+1-1=0,
\]
we have by Lemma \ref{lem:log_lemma}
\[
 \op^{-1}_{\psi}(\op F^{K})=o_{L^2}(h^{1/2}).
\]

\end{proof}
 
 \begin{definition}
 We say that $f\in  C^\infty(M\setminus \{z_0\}; T^*_{0,1}M)$ is $h$-homogeneous of degree $r$ if $f$ is a finite sum of the form
 \[
  f=\sum_m h^{l_m} f_m,
 \]
 where for each $m$ and $f_m$ in the summand satisfies $r \in l_m + \deg_h(f_m)$ 
and the functions $f_m$ are independent of $h$. 
 
 If $\deg (f_m)$ is odd (respectively even) for all $m$, we say that $f$ is oddly (resp. evenly) $h$-homogeneous, or that $f$ is odd (resp. even).  We define similarly for functions of $M$ and sections of $T_{1,0}^*M$. 
 \end{definition}

 \begin{lemma}\label{lem:expansion2}
 Let $f\in C^\infty(M\setminus \{z_0\})$ with $\supp(f)\subset M$ be independent of $h$ and $K\in \N\cup \{0\}$.
 
 \noindent a) If $\deg(f)=2K+1$, then 
 \[
  \op_\psi^{-1}(f)=e^{-2i\psi/h}\tilde f+ h^{K+1}o_{L^p}(1),
 \]
where $\tilde f$ is evenly $h$-homogeneous and $ \deg_h(\tilde f)=(-\infty, K+2)$.\\
\noindent b) If $\deg(f)=2K$, then
\[
  \op_\psi^{-1}(f)=e^{-2i\psi/h}\tilde f+ h^{K}o_{L^2}(h^{1/2}), 
 \]
 where $\tilde f$ is oddly $h$-homogeneous and $\deg_h(\tilde f) =(-\infty, K+1]$.

We have analogous result for the operator $\p_\psi^{-1}$. 
\end{lemma}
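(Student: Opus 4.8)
The plan is to read off Lemma~\ref{lem:expansion2} from Lemma~\ref{lem:expansion}, by stopping the iterated expansion of $\op_\psi^{-1}(f)$ at the first index where the remainder already matches the error claimed here, and then bundling the resulting finite sum of powers of $h$ into the single quantity $\tilde f$.

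In case~(a), where $\deg(f)=2K+1$ is odd, I would apply part~(b) of Lemma~\ref{lem:expansion} with the same $K$, which gives
\[
 \op_\psi^{-1}(f)=e^{-2i\psi/h}\sum_{j=1}^{K+1}h^jF^j+h^{K+1}\op_\psi^{-1}(\op F^{K+1}),
\]
where the $F^j$ are $h$-independent, are supported (after the cutoffs already imposed in this section) in a neighbourhood of $z_0$, and $\op_\psi^{-1}(\op F^{K+1})=o_{L^p}(1)$ for every $p\in[1,\infty)$. Setting $\tilde f:=\sum_{j=1}^{K+1}h^jF^j$ then gives exactly the decomposition claimed in~(a). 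In case~(b), where $\deg(f)=2K$ is even and $K\geq1$, I would instead invoke part~(c) of Lemma~\ref{lem:expansion}, obtaining
\[
 \op_\psi^{-1}(f)=e^{-2i\psi/h}\sum_{j=1}^{K}h^jF^j+h^{K}\op_\psi^{-1}(\op F^{K}),
\]
with $\op_\psi^{-1}(\op F^{K})=o_{L^2}(h^{1/2})$, and take $\tilde f:=\sum_{j=1}^{K}h^jF^j$; the degenerate case $K=0$, where $\deg(f)=0$, is immediate from Lemma~\ref{lem:log_lemma}, with $\tilde f\equiv0$.

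It remains to check the two structural statements about $\tilde f$, and for this the only input needed is the degree identity $\deg(F^j)=\deg(f)-2j+1$, which is established at the start of the proof of Lemma~\ref{lem:expansion} (from $\op\psi=c\bar z+O(|z|^2)$ with $c\neq0$, by induction). In case~(a) this gives $\deg(F^j)=2(K-j+1)$, even for every $j$, so $\tilde f$ is evenly $h$-homogeneous; feeding each $F^j$ into the even branch of part~(a) of Lemma~\ref{lem:expansion} yields $\deg_h(F^j)=(-\infty,(K-j+1)+1)$, hence $j+\deg_h(F^j)=(-\infty,K+2)$ for each $j=1,\dots,K+1$, and therefore $\deg_h(\tilde f)=(-\infty,K+2)$. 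In case~(b) we get $\deg(F^j)=2(K-j)+1$, odd for every $j$, so $\tilde f$ is oddly $h$-homogeneous; the odd branch of part~(a) of Lemma~\ref{lem:expansion} gives $\deg_h(F^j)=(-\infty,(K-j)+1]$, hence $j+\deg_h(F^j)=(-\infty,K+1]$ for $j=1,\dots,K$, so $\deg_h(\tilde f)=(-\infty,K+1]$. The analogous statement for $\p_\psi^{-1}$ follows by exchanging the roles of $\p$ and $\op$ (equivalently of $z$ and $\bar z$) throughout and using the corresponding cases of Lemma~\ref{lem:expansion}.

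Since Lemma~\ref{lem:expansion} already carries all the analytic weight, this argument is essentially bookkeeping, and I do not expect a genuine obstacle. The one point I would be careful about is the parity crossover: an \emph{odd}-degree input $f$ produces iterates $F^j$ of \emph{even} degree, and conversely, which is precisely what switches the $\deg_h$-interval between the half-open form $(-\infty,K+2)$ in~(a) and the closed form $(-\infty,K+1]$ in~(b) via the even/odd dichotomy in part~(a) of Lemma~\ref{lem:expansion}; I would check that this alignment, together with the shift by $j$ in each summand and the matching of the $o_{L^p}(1)$ versus $o_{L^2}(h^{1/2})$ remainder, is tracked correctly.
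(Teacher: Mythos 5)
Your proposal is correct and follows essentially the same route as the paper: invoke parts b) and c) of Lemma~\ref{lem:expansion}, set $\tilde f := \sum_j h^j F^j$, and then read off the parity and $\deg_h$ of $\tilde f$ from the identity $\deg(F^j)=\deg(f)-2j+1$ together with part a) of Lemma~\ref{lem:expansion}, exactly as the paper does. Your explicit treatment of the degenerate $K=0$ case in part~(b) via Lemma~\ref{lem:log_lemma} is slightly more careful than the paper's write-up (which leaves it implicit in the empty sum), but the content is the same.
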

\begin{proof}
\textbf{a)} Assume first that $\deg(f)=2K+1$. 
 By Lemma \ref{lem:expansion} part b), we have
 \begin{align}
  \op_{\psi}^{-1}(f)=e^{-2i\psi/h}\sum_{j=1}^{K+1}h^j F^j+ h^{K+1}o_{L^p}(1),
 \end{align}
 where 
 \[
  \deg(F^j)=2K+2-2j
 \]
 is even. By {\color{black} Lemma \ref{lem:expansion} part a)}
\[
 \deg_h(h^jF^j)=(-\infty, K+2 )
\]
independently of $j$.
Thus
\[
 \op_{\psi}^{-1}(f)=e^{-2i\psi/h}\tilde f + h^{K+1}o_{L^p}(1),
\]
where $\deg_h(\tilde f) =(-\infty, K+2)$ as claimed.

\textbf{b)} Assume then that $\deg(f)=2K$. By Lemma \ref{lem:expansion} part c), we have
\[
 \op_{\psi}^{-1}(f)=e^{-2i\psi/h}\sum_{j=1}^{K}h^j F^j+ h^K o_{L^2}(h^{1/2}),
\]
where $\deg(F^j) = 2K +1 -2j$ is odd. We now have by Lemma \ref{lem:expansion} part a),
\begin{align*}
 \deg_h(h^jF^j)&=(-\infty, K+1]
\end{align*}
 This concludes the proof.
\end{proof}

\begin{lemma}\label{lem:expansion3}
 Let $f=f_h\in  C^\infty(M\setminus \{z_0\}; T^*_{0,1}M)$ with $\supp(f) \subset M$ and $r\in \deg_h(f)$.
 
 \noindent a) If $f$ is oddly $h$-homogeneous,  then 
 \[
  \op_\psi^{-1}(f)=e^{-2i\psi/h}\tilde f+ o_{L^p}(h^r),
 \]
where $\tilde f$ is evenly $h$-homogeneous and 
\[
 \deg_h(\tilde f)=\deg_h^\circ(f) +1
\]
and $\deg_h^\circ(f)$ denotes the interior of the interval $\deg_h(f)$.\\
\noindent b) If $f$ is evenly $h$-homogeneous with $r\in \deg_h(f)$, then 
 \[
  \op_\psi^{-1}(f)=e^{-2i\psi/h}\tilde f+ h^{r-1}o_{L^2}(h^{1/2}).
 \]
where $\tilde f$ is oddly $h$-homogeneous and 
\[
 \deg_h(\tilde f)=\deg_h(f). 
\]
\end{lemma}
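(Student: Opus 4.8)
The plan is to reduce Lemma \ref{lem:expansion3} to the $h$-independent statements already established in Lemma \ref{lem:expansion2}, using only linearity of $\op_\psi^{-1}$ and the bookkeeping rules for $\deg_h$ and $h$-homogeneity.

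For part a), I would fix a decomposition $f=\sum_m h^{l_m}f_m$ witnessing the odd $h$-homogeneity of $f$: each $f_m\in C^\infty(M\setminus\{z_0\};T^*_{0,1}M)$ is $h$-independent, supported near $z_0$, with $\deg(f_m)=2K_m+1$ odd. By Lemma \ref{lem:expansion} a) then $\deg_h(f_m)=(-\infty,K_m+1]$, so the hypothesis $r\in\deg_h(f)$ — which for such an $h$-homogeneous $f$ is exactly $r\in\bigcap_m\bigl(l_m+\deg_h(f_m)\bigr)$ — forces $l_m+K_m+1\ge r$ for every $m$. I would then apply Lemma \ref{lem:expansion2} a) termwise, getting $\op_\psi^{-1}(f_m)=e^{-2i\psi/h}\tilde f_m+h^{K_m+1}o_{L^p}(1)$ with $\tilde f_m$ evenly $h$-homogeneous and $\deg_h(\tilde f_m)=(-\infty,K_m+2)$, and set $\tilde f:=\sum_m h^{l_m}\tilde f_m$, which is again evenly $h$-homogeneous. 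Summing and using $l_m+K_m+1-r\ge 0$, the remainder $\sum_m h^{l_m+K_m+1}o_{L^p}(1)$ equals $h^r\sum_m h^{l_m+K_m+1-r}o_{L^p}(1)=o_{L^p}(h^r)$; and $\deg_h(\tilde f)=\bigcap_m\bigl(l_m+(-\infty,K_m+2)\bigr)=\bigl(-\infty,\min_m(l_m+K_m+1)+1\bigr)=\deg_h^\circ(f)+1$, which is the claimed identity.

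Part b) is entirely parallel: now $\deg(f_m)=2K_m$ even, so $\deg_h(f_m)=(-\infty,K_m+1)$ by Lemma \ref{lem:expansion} a) and $r\in\deg_h(f)$ gives $l_m+K_m+1>r$, i.e.\ $l_m+K_m\ge r-1$; Lemma \ref{lem:expansion2} b) yields $\op_\psi^{-1}(f_m)=e^{-2i\psi/h}\tilde f_m+h^{K_m}o_{L^2}(h^{1/2})$ with $\tilde f_m$ oddly $h$-homogeneous and $\deg_h(\tilde f_m)=(-\infty,K_m+1]$, so setting $\tilde f=\sum_m h^{l_m}\tilde f_m$ (again oddly $h$-homogeneous) the remainder is $h^{r-1}\sum_m h^{l_m+K_m-(r-1)}o_{L^2}(h^{1/2})=h^{r-1}o_{L^2}(h^{1/2})$ and $\deg_h(\tilde f)=\bigcap_m\bigl(l_m+(-\infty,K_m+1]\bigr)=\deg_h(f)$. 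The assertions for $\p_\psi^{-1}$ follow verbatim from the $\p_\psi^{-1}$ versions of Lemmas \ref{lem:expansion} and \ref{lem:expansion2}. The only place I expect to need care — rather than any genuine analytic difficulty — is keeping the open/closed endpoints of the intervals $\deg_h(\cdot)$ correct as they are shifted by $\op_\psi^{-1}$, and recording cleanly that $r\in\deg_h(f)$ rules out cancellation of the leading pieces of $\sum_m h^{l_m}f_m$; all the substantive estimates have already been carried out in Lemmas \ref{lem:expansion} and \ref{lem:expansion2}, so the present lemma is purely an exercise in assembling those building blocks linearly.
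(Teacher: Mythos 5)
Your proof is correct and follows essentially the same route as the paper: fix the witnessing decomposition $f=\sum_m h^{l_m}f_m$, apply Lemma \ref{lem:expansion2} termwise, reassemble $\tilde f=\sum_m h^{l_m}\tilde f_m$, and track how $\deg_h$ shifts. The only cosmetic difference is that you keep the inequality $l_m+K_m+1\ge r$ explicit and work with $\bigcap_m\bigl(l_m+\deg_h(f_m)\bigr)$, whereas the paper reads the definition of $h$-homogeneity as saying $l_m+\sup\deg_h(f_m)$ is constant in $m$ and writes the remainder directly as $h^{l_m}o_{L^p}(h^{\deg_h(f_m)})$; both reduce to the same bookkeeping, and your version handles the case of unequal $l_m+K_m+1$ a bit more cleanly.
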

\begin{proof}
\textbf{a)} By assumption there are integers $l_1,l_2,\ldots$ such that
 \[
 f=\sum_{m} h^{l_m} f_m,               
\]
where the functions $f_m$ are independent of $h$ and $\deg(f_m)$ is odd. For all $m$ we have $ \sup \left(\deg_h(f)\right)= \sup \left(\deg_h( f_m) \right) + l_m$ is independent of $m$ by assumption. Using {\color{black} Lemma \ref{lem:expansion} part a)} we see that $\deg_h(f)=(-\infty, l_m+\lfloor \deg(f_m)/2\rfloor +1]$. By Lemma \ref{lem:expansion2} part a)
\begin{align*}
 \op_\psi^{-1}(f)&=\sum_{m} \left(h^{l_m}e^{-2i\psi/h}\tilde f_m+ h^{l_m}o_{L^p}(h^{\deg_h(f_m)}\right),
\end{align*}
where $\tilde f_m$ is evenly $h$-homogeneous and 
\[
 \deg_h(\tilde f_m)=(-\infty, \lfloor \deg( f_m)/2\rfloor+2)= \deg_h^\circ(f)-l_m+1.
\]
Thus 
\[
 \deg_h(h^{l_m}\tilde f_m)=\deg_h^\circ(f)+1 
\]
independently of $m$.
Writing $\tilde f=\sum_m h^{l_m}\tilde f_m$ proves part a). 

\noindent{\textbf{b)}}
Setting $K_m = \deg(f_m)$ even, we have that by Lemma \ref{lem:expansion} part a),  
$$\deg_h(f) =(\infty, l_m +K_m/2 +1). $$
Note that this quantity is independent of $m$ by our assumption that $f$ is $h$-homogeneous. Using Lemma \ref{lem:expansion2} part b),
\begin{align*}
 \op_\psi^{-1}(f)&=\sum_{m} \left(h^{l_m}e^{-2i\psi/h}\tilde f_m+ h^{l_m}h^{K_m/2}o_{L^2}(h^{1/2})\right),
\end{align*}
where $\tilde f_m$ is oddly $h$-homogeneous and 
\[
 \deg_h(\tilde f_m)=(-\infty, K_m/2+1).
\]
Thus $\deg_h(\tilde f) = \deg_h(h^{l_m}\tilde f_m)=(-\infty, K_m/2 + l_m +1) = \deg_h(f)$. This proves part b).
\end{proof}

We have now tools to analyze terms of the form $\abs{\p \Phi}^4\abs{a}\overline a r_h$ and other ones, which only involves single $r_h$ in their expressions. Recall the formula for $r_h$:
\begin{eqnarray}
\label{eq: formula of rh}
 r_h=-  \overline{\p}_\psi^{-1}s_h,\ s_h =\sum_{j=0}^\infty T_h^j\op_\psi^{*-1}(qa).
\end{eqnarray}


The next result states that with regards to stationary phase, the corresponding oscillatory integral applied to $r_h$ decays slightly faster than expected:
\begin{proposition}\label{prop:correct_decay_single_rh}
 Let 
 $f\in C^\infty(M\setminus \{z_0\})$ with $\supp(f)\subset\subset M$, then 
 \[
  \int e^{4i\psi/h}f r_h=o(h^{\lfloor \deg(f)/2\rfloor +1}).
 \]
\end{proposition}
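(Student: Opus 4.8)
The plan is to expand $r_h$ through its defining equation, integrate by parts so that the weighted Cauchy operators fall on the oscillatory factor $e^{4i\psi/h}f$ rather than on the amplitude $a$, and then iterate the expansion Lemmas \ref{lem:expansion}--\ref{lem:expansion3} while bookkeeping the $h$-degree. First I would localize as in \cite{guillarmou2011identification}: inserting cutoffs equal to $1$ near $z_0$, the part away from $z_0$ is $O(h^\infty)$ (non-stationary phase there, and $a$ vanishes to high order at the other critical points of $\psi$), so we may assume $q,a,f$ and every section occurring below is supported in a small coordinate patch around $z_0$; there, modulo $O(h^\infty)$, $\op_\psi^{-1}=\op^{-1}e^{-2i\psi/h}$ and $\op_\psi^{*-1}=\p^{-1}e^{2i\psi/h}$, with $\op^{-1},\p^{-1}$ skew-symmetric for the bilinear pairing $\int uv$.

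The key identities are $r_h=-\op_\psi^{-1}s_h$ and $s_h=\op_\psi^{*-1}(q(a+r_h))$ (from \eqref{eq:equation_for_rh} together with $T_hs_h=\op_\psi^{*-1}(qr_h)$). Using the first and the skew-symmetry of $\op^{-1}$,
\[
\int e^{4i\psi/h}f\,r_h=\int\op^{-1}(e^{4i\psi/h}f)\;e^{-2i\psi/h}s_h.
\]
Here $\op^{-1}(e^{4i\psi/h}f)$ is $\op_\psi^{-1}f$ with $\psi$ replaced by $-2\psi=\Im(-2\Phi)$, where $-2\Phi$ is holomorphic Morse with critical point $z_0$; since $\deg_h$ of an $h$-independent (or $h$-homogeneous) function depends only on its vanishing order, not on the nonzero scaling of $\psi$ in the phase (Lemma \ref{lem:expansion}\,a), Lemma \ref{lem:expansion2} gives $\op^{-1}(e^{4i\psi/h}f)=e^{4i\psi/h}\tilde f_1+E_1$ with $\tilde f_1$ $h$-homogeneous, $\sup\deg_h(\tilde f_1)$ equal to $\lfloor\deg f/2\rfloor+1$ (closed, oddly homogeneous) if $\deg f$ is even and $\lfloor\deg f/2\rfloor+2$ (open, evenly homogeneous) if $\deg f$ is odd, and $E_1=h^{\lfloor\deg f/2\rfloor}o_{L^2}(h^{1/2})$ or $h^{\lfloor\deg f/2\rfloor+1}o_{L^p}(1)$ accordingly. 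As $\|s_h\|_{L^2}=O(h^{1/2+\eps})$ by \eqref{eq:rh_L2_estim}, the $E_1$-term is $o(h^{\lfloor\deg f/2\rfloor+1})$ by Cauchy--Schwarz (the even case really uses the extra $h^{1/2+\eps}$). For the main term, plug in $s_h=\p^{-1}(e^{2i\psi/h}q(a+r_h))$ and integrate by parts past $\p^{-1}$ to get $-\int\op_\psi^{*-1}(\tilde f_1)\,e^{2i\psi/h}q(a+r_h)$; expanding $\op_\psi^{*-1}(\tilde f_1)=e^{2i\psi/h}\tilde f_2+E_2$ by the analogue of Lemma \ref{lem:expansion3} for $\op_\psi^{*-1}$ gives $\tilde f_2$ $h$-homogeneous with $\sup\deg_h(\tilde f_2)>\lfloor\deg f/2\rfloor+1$ (the parity flip gives a $+1$ gain when $\deg f$ is even; when $\deg f$ is odd the threshold was already exceeded) and $\|E_2\|_{L^p}=o(h^{\lfloor\deg f/2\rfloor+1})$, so the $E_2$-term (against the bounded $q(a+r_h)$) is negligible. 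There remains $-\int e^{4i\psi/h}(q\tilde f_2)(a+r_h)$: the $a$-part is $o(h^{\lfloor\deg f/2\rfloor+1})$ straight from the definition of $\deg_h$ (Lemma \ref{lem:expansion}\,a on the $h$-independent summands of $q\tilde f_2$, with $qa$ in the test function), and the $r_h$-part $\int e^{4i\psi/h}(q\tilde f_2)r_h$ has exactly the form of the original integral, with the $h$-homogeneous function $q\tilde f_2$, $\sup\deg_h>\lfloor\deg f/2\rfloor+1$, in place of $f$.

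Iterating this round (now using Lemma \ref{lem:expansion3} rather than Lemma \ref{lem:expansion2} at the first step), at each round $\sup\deg_h$ of the leading coefficient grows by at least $1$, all intermediate $a$- and $E$-terms are $o(h^{\lfloor\deg f/2\rfloor+1})$ (from the second round on, the $E$-terms are already negligible because $\sup\deg_h$ has passed the threshold), and the $L^{r'}$-norm of the coefficient shrinks by $O(h^2)$ per round (for $r'<2$; all the iterated $F^j$'s have vanishing order $\geq 0$, hence lie in $L^{r'}$). Thus after $j_0:=\lceil(\lfloor\deg f/2\rfloor+1)/2\rceil$ rounds the remaining term is $\int e^{4i\psi/h}\phi_{j_0}r_h$ with $\|\phi_{j_0}\|_{L^{r'}}=O(h^{2j_0})$, which by H\"older with \eqref{eq:CZ_rh_norms} is $O(h^{2j_0+\eps})=o(h^{\lfloor\deg f/2\rfloor+1})$. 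Summing the finitely many contributions gives $\int e^{4i\psi/h}fr_h=o(h^{\lfloor\deg f/2\rfloor+1})$.

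The main obstacle is the $h$-degree bookkeeping through these alternating Cauchy-transform/expansion steps: one must verify that $\sup\deg_h$ of the $h$-homogeneous main part never drops, gains at least $1$ per round (this is exactly the parity alternation in Lemma \ref{lem:expansion3}: oddly homogeneous inputs gain $+1$ and become evenly homogeneous, evenly homogeneous ones keep $\sup\deg_h$ but close the interval and become oddly homogeneous), and is strictly past $\lfloor\deg f/2\rfloor+1$ already after the first round --- here the open/closed-interval distinction sits exactly at the threshold and must be tracked with care --- and that every expansion remainder, once paired against the factor it multiplies, beats $h^{\lfloor\deg f/2\rfloor+1}$; the one delicate case is the first remainder $E_1$, which is saved only by being multiplied by $s_h=O_{L^2}(h^{1/2+\eps})$.
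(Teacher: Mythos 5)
Your proof is correct, and it relies on the same core machinery as the paper — the integration-by-parts identities for $\op^{-1}$, $\p^{-1}$, the expansion Lemmas~\ref{lem:expansion}--\ref{lem:expansion3}, the $O(h^{1/2+\eps})$ bound on $s_h$, and the $\deg_h$ bookkeeping — but the engine driving the iteration is genuinely different. The paper truncates the Neumann series $s_h = -\sum_{j\geq 0}T_h^j\op_\psi^{*-1}(qa)$ at a finite $J$ (affordable since $\|T_h\|_{L^2\to L^2}=O(h^{1/2-\eps})$), then for each fixed $j$ successively transposes the $T_h$ factors and applies Lemmas~\ref{lem:expansion3}\,a)--b) to show that term is $o(h^{K+1})$. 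You instead use the fixed-point identity $s_h=\op_\psi^{*-1}(q(a+r_h))$, which reproduces an integral of the exact shape $\int e^{4i\psi/h}\phi\,r_h$ with a new $h$-homogeneous $\phi$ whose $\sup\deg_h$ has risen by $1$, and you terminate the loop once the accumulated $h^{2j}$ factor in $\|\phi_{j}\|_{L^{r'}}$ beats $h^{K+1}$ against the crude $L^r$ bound on $r_h$ from~\eqref{eq:CZ_rh_norms}. These are two legitimate ways of organizing the same resummation; yours avoids having to fix $J$ in advance and makes the "gain $+1$ in $\deg_h$ per round" structure more visible, while the paper's stays closer to the Neumann-series bookkeeping already set up for $s_h$. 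One small simplification you could adopt from the paper: it is enough to treat $\deg(f)=2K$ even (an odd-degree $f$ is also an even-degree function with zero leading coefficient, and the target exponent $\lfloor\deg f/2\rfloor+1$ is the same), which lets you skip the parallel odd-case case tracking of the first-round error $E_1$. Two points that deserve an explicit sentence if this were written up: (i) when you apply the expansion lemmas to $\op^{-1}(e^{4i\psi/h}\,\cdot)$, you are invoking them with $\psi$ replaced by $-2\psi$, and you should note (as you do in passing) that this does not change $\deg_h$ because Lemma~\ref{lem:expansion}\,a) ties $\deg_h$ only to the vanishing order, not to the nonzero scaling of the Morse phase; and (ii) the per-round gain in $\sup\deg_h$ is $+1$ only over the pair of Cauchy transforms comprising one "round" (the even$\to$odd step keeps the sup and closes the interval, the odd$\to$even step raises it), so the word "round" must consistently mean that two-step block — as you do, but it is easy to miscount.
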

\begin{proof}
It is obvious that it is sufficient to prove the claim for $\deg(f)$ even. 
Thus, we assume that $\deg(f)=2K$ and $K\geq 0$. Lemma \ref{lem:expansion2} part b) and integration by parts yield
 \begin{eqnarray}
\label{eq: first ibp}
 \int e^{4i\psi/h}f r_h=-\int (\op_\psi^{-1}e^{4i\psi/h}f) s_h=-\int (\p_\psi^{*-1}f)\wedge s_h \\\nonumber
 =-\int e^{2i\psi/h}\tilde f_0\wedge s_h + h^K\int o_{L^2}(h^{1/2})\wedge s_h,
\end{eqnarray}
where $\tilde f_0 =\sum_m h^{l_m}f_m,$ is oddly $h$-homogeneous,
and 
\begin{eqnarray}\label{degh of tilde f0}
\deg_h(\tilde f_0)=\deg_h(h^{l_m}f_m)=\deg_h(f)=(-\infty,K+1).
\end{eqnarray} (We remark that if $K=0$, then $\tilde f_0\equiv 0$.)

Since $s_h=O_{L^2}(h^{1/2+\eps})$ by \eqref{eq:rh_L2_estim}, we have 
\[
 h^K\int o_{L^2}(h^{1/2})\wedge s_h= o(h^{K+1})
\]
so that \eqref{eq: first ibp} becomes
\begin{eqnarray}
\label{eq: first ibp'}
\int e^{4i\psi/h}f r_h =-\int e^{2i\psi/h}\tilde f_0 \wedge s_h  + o(h^{K+1}).
\end{eqnarray}

For the first term in \eqref{eq: first ibp'}, since $\norm{T_h}_{L^2\to L^2}=O(h^{1/2-\eps})$ and $\op_\psi^{*-1}(qa)=O_{L^2}(h^{1/2+\eps})$, we may take the sum in the definition of $s_h$ in \eqref{eq: formula of rh} to be finite and absorb the remaining terms of the infinite sum into the term $o(h^{K+1})$ of \eqref{eq: first ibp'}. Thus, for some $J\in \mathbb N$ sufficiently large (depending on $K$),
\begin{eqnarray}
\label{eq: sum plus remainder}
 \int e^{4i\psi/h}f r_h=\sum_{j=0}^J \int e^{2i\psi/h}\tilde f_0\wedge T_h^j\op_\psi^{*-1}(qa)+ o(h^{K+1}).
\end{eqnarray}
We begin by showing that the $j =0$ term is of order $o(h^{K+1})$. To this end, we take adjoint of $\op_\psi^{*-1}$ to get
\begin{eqnarray}
\label{f0 term decay}
\int_M e^{2i\psi/h}\tilde f_0\wedge \op_\psi^{*-1}(qa) = \int_M qae^{-2i\psi/h}\bar\partial^{-1}_\psi \tilde f_0.
\end{eqnarray}
By applying Lemma \ref{lem:expansion3} part a) then \eqref{degh of tilde f0},
$$\deg_h(\bar\partial^{-1}_\psi \tilde f_0) =(-\infty, K+2).$$
 Since $q\in C^\infty_c(M)$ we have that \eqref{f0 term decay} becomes
$$\int_M e^{2i\psi/h}\tilde f_0\op_\psi^{*-1}(qa) = o(h^{K+1})$$
so \eqref{eq: sum plus remainder} becomes
\begin{eqnarray}\label{eq2: sum plus remainder} 
\int e^{4i\psi/h}f r_h=\sum_{j=1}^J \int e^{2i\psi/h}\tilde f_0\wedge T_h^j\op_\psi^{*-1}(qa)+ o(h^{K+1 }).
\end{eqnarray}
To deal with the finite sum involving $T_h^j$, we demonstrate the case when $j =1$ as the remaining terms can be treated analogously. We take the adjoint of $T_h$ defined by \eqref{adjoint of T} to get
\begin{eqnarray}
\label{int of tilde f0 with Th}
\int e^{2i\psi/h}\tilde f_0 \wedge T_h\op_\psi^{*-1}(qa) = \int e^{-2i\psi/h}\left(\p^{*-1}e^{2i\psi/h} \p^{-1} (e^{2i\psi/h} \tilde f_0) \right)\wedge\op_\psi^{*-1}(qa).
\end{eqnarray}
Now use Lemma \ref{lem:expansion3} parts a) and b) to show that 
$$\p^{*-1}\Big(e^{2i\psi/h} \p^{-1} (e^{2i\psi/h} \tilde f_0)\Big) = e^{2i\psi/h}\tilde f +h^{K+1-\epsilon} O_{L^2}(|h\log h|^{1/2})+ o_{L^p}(h^{K+1-\epsilon})$$
for all $\epsilon >0$ with $\tilde f$ oddly $h$-homogeneous and $ \deg_h(\tilde f_0)\subset \deg_h(\tilde f)$.
 Insert this representation for $\p^{*-1}e^{2i\psi/h} \p^{-1} (e^{2i\psi/h} \tilde f_0)$ into \eqref{int of tilde f0 with Th} and use Lemma \ref{lem:log_lemma} for estimating $\op_\psi^{*-1} (qa)$, we get that
\begin{eqnarray}
\int_M e^{2i\psi/h}\tilde f_0 \wedge T_h\op_\psi^{*-1}(qa) = \int_M e^{2i\psi/h} \tilde f \wedge \op_\psi^{*-1}(qa) + o(h^{K+1})
\end{eqnarray}
Observe now that we have arrived at the same place as \eqref{f0 term decay} with $\tilde f_0$ replaced by $\tilde f$. Repeat the same argument following \eqref{f0 term decay} completes the proof.
\end{proof}

\begin{corollary}\label{cor:correct_decay_partial_rh}
 Let $f\in C_c^\infty(\Sigma)$ be supported around a coordinate chart and $\deg(f)\geq l\geq 0$, then we have the following estimate for the coordinate expression of the integral
 \[
  \int e^{4i\psi/h}f \p^l r_h=o(h^{\lfloor (\deg(f)-l)/2\rfloor +1})=o(h^{\lfloor (\deg(\p^lf)/2\rfloor +1}).
 \]
 We have analogous result for the case where $\p^l$ above is replaced by $\op^l$, or a mixture of powers of $\p$ and $\op$.
\end{corollary}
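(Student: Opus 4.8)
The plan is to move all the derivatives off $r_h$ by integration by parts and then apply Proposition~\ref{prop:correct_decay_single_rh}. First, elliptic regularity applied to $(\Delta+q)\big(e^{\Phi/h}(a+r_h)\big)=0$ with $q,a,\Phi$ smooth shows that $r_h\in C^\infty(M)$, so the integration by parts is legitimate; and since $f\in C_c^\infty$ is supported inside the coordinate chart, no boundary terms appear. In the holomorphic coordinate $z$ I write $\p=\partial_z$ and use the identity $e^{4i\psi/h}=e^{2\Phi/h}e^{-2\overline{\Phi}/h}$ (valid because $\Phi-\overline{\Phi}=2i\psi$). Since $\Phi$ is holomorphic, $\p$ annihilates $e^{-2\overline{\Phi}/h}$, so
\[
 \int e^{4i\psi/h}f\,\p^l r_h=(-1)^l\int e^{-2\overline{\Phi}/h}\,\p^l\!\big(e^{2\Phi/h}f\big)\,r_h .
\]

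Next I would expand $\p^l\big(e^{2\Phi/h}f\big)$ by the Leibniz rule and the Fa\`a di Bruno formula:
\[
 \p^l\!\big(e^{2\Phi/h}f\big)=e^{2\Phi/h}\sum_{m=0}^{l}\binom{l}{m}\sum_{\pi\vdash m}c_\pi\Big(\tfrac{2}{h}\Big)^{|\pi|}\Big(\prod_{p\in\pi}\Phi^{(|p|)}\Big)\,\p^{l-m}f ,
\]
where the inner sum runs over partitions $\pi$ of $m$ and $|\pi|$ is the number of parts. Substituting, the integral becomes a \emph{finite} sum of terms $h^{-|\pi|}\int e^{4i\psi/h}g_{m,\pi}\,r_h$ with $g_{m,\pi}=\big(\prod_{p\in\pi}\Phi^{(|p|)}\big)\p^{l-m}f\in C_c^\infty(M)$, and Proposition~\ref{prop:correct_decay_single_rh} bounds each of these as $o\big(h^{\lfloor\deg(g_{m,\pi})/2\rfloor+1}\big)$.

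What remains is degree bookkeeping. Because $\Phi$ is a holomorphic Morse function with critical point $z_0$, $\Phi'$ vanishes to first order there while $\Phi''(z_0)\neq0$, hence $\deg\Phi'=1$ and $\deg\Phi^{(j)}\ge0$ for $j\ge2$; moreover $\deg(\p^{l-m}f)\ge\deg(f)-(l-m)\ge0$, since each $\partial_z$ lowers the degree by at most one and $l\le\deg(f)$. Writing $a$ for the number of parts of $\pi$ of size $1$ and $b$ for the number of parts of size $\ge2$, we have $|\pi|=a+b$, $m=\sum_{p\in\pi}|p|\ge a+2b$, and, since the degree of a product is at least the sum of the degrees, $\deg(g_{m,\pi})\ge a+\deg(f)-(l-m)$. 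Superadditivity of the floor then gives
\[
 \Big\lfloor\tfrac{\deg(g_{m,\pi})}{2}\Big\rfloor-|\pi|\ \ge\ \Big\lfloor\tfrac{\deg(f)-l}{2}\Big\rfloor+\Big\lfloor\tfrac{a+m}{2}\Big\rfloor-(a+b)\ \ge\ \Big\lfloor\tfrac{\deg(f)-l}{2}\Big\rfloor ,
\]
the last inequality using $a+m\ge2(a+b)$. Thus every term is $o\big(h^{\lfloor(\deg(f)-l)/2\rfloor+1}\big)$, and there are finitely many, which proves the estimate; the $m=0$ term is exactly $(-1)^l\int e^{4i\psi/h}(\p^lf)r_h=o\big(h^{\lfloor\deg(\p^lf)/2\rfloor+1}\big)$, the second (generically equivalent) form in the statement. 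For $\op^l r_h$ the argument is identical, now using that $\op$ annihilates $e^{2\Phi/h}$, that $e^{-2\overline{\Phi}/h}$ plays the role of $e^{2\Phi/h}$, and that $\partial_{\bar z}\overline{\Phi}$ vanishes to first order at $z_0$; for a mixed power $\p^{l_1}\op^{l_2}r_h$ one integrates by parts in both variables and expands both exponential factors.

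The main obstacle is precisely this bookkeeping: one must check that the factor $h^{-1}$ produced each time $\p$ falls on $e^{2\Phi/h}$ is always compensated by the vanishing of $\Phi'$ at $z_0$ — this is where the Morse hypothesis is essential, through $\deg\Phi'=1$ — together with the fact that the floor functions cooperate, i.e.\ the inequality $\lfloor(a+m)/2\rfloor\ge a+b$ under the constraint $m\ge a+2b$. A secondary point is to verify that every $g_{m,\pi}$ satisfies the hypotheses of Proposition~\ref{prop:correct_decay_single_rh} (smooth, compactly supported in $M$, finite nonnegative degree at $z_0$), which is immediate from smoothness of $f$ and $\Phi$, the degenerate case $g_{m,\pi}\equiv0$ near $z_0$ being trivial by non-stationary phase.
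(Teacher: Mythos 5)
Your proof is correct, and it rests on the same foundation as the paper's: move the $l$ derivatives off $r_h$ by integration by parts and invoke Proposition~\ref{prop:correct_decay_single_rh}, with the key observation that each factor $h^{-1}$ coming from differentiating the phase $e^{2\Phi/h}$ is compensated by a unit increase in degree because the Morse hypothesis gives $\deg\Phi'=1$. The paper executes this one derivative at a time as a clean induction: a single integration by parts produces $4ih^{-1}(\p\psi)f+\p f$, and since $\deg((\p\psi)f)\ge\deg(f)+1$ while $\deg(\p f)\ge\deg(f)-1$, the inductive hypothesis applies to both summands and closes the step in two lines. You instead unroll all $l$ integrations at once and expand $\p^l(e^{2\Phi/h}f)$ via a Leibniz/Fa\`a di Bruno formula, then verify the inequality $\lfloor\deg(g_{m,\pi})/2\rfloor-|\pi|\ge\lfloor(\deg f-l)/2\rfloor$ by superadditivity of the floor and the partition count $m\ge a+2b$. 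Your bookkeeping is sound (and the phase factorization $e^{4i\psi/h}=e^{2\Phi/h}e^{-2\overline\Phi/h}$ with $\p$ annihilating $e^{-2\overline\Phi/h}$ is a valid substitute for the paper's use of $\p\psi$), but it is considerably more elaborate than necessary; the induction hides all of the combinatorics. One minor point: your closing remark invoking ``nonstationary phase'' when $g_{m,\pi}$ vanishes near $z_0$ is loose, since $r_h$ is $h$-dependent; the cleaner way to handle it is to note that Proposition~\ref{prop:correct_decay_single_rh} already applies with any sufficiently large finite degree, and the paper's inductive step avoids the case distinction entirely.
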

\begin{proof}
The case $l = 0$ is simply Proposition \ref{prop:correct_decay_single_rh}.  We prove the case of general $l\in \N$ by induction. By integrating by parts, we obtain
 \[
\int e^{4i\psi/h}f \p^l r_h=-   \int e^{4i\psi/h}(4ih^{-1}(\p \psi)f+  \p f) \p^{l-1}r_h,
 \]
 where $\deg((\p \psi)f)\geq \deg(\p \psi)+\deg(f)=\deg(f)+1$ and $\deg(\p f)\geq \deg(f)-1\geq 0$. By the inductive assumption we have
 \begin{align*}
  &\int e^{4i\psi/h}(4ih^{-1}(\p \psi)f+  \p f)  \p^{l-1}r_h= h^{-1} o(h^{\lfloor\deg (f) + 2 - l \rfloor/2+1}) + o(h^{\lfloor  \deg(f)  - l \rfloor/2 +1})\\&\qquad = o(h^{\lfloor  \deg(f)  - l \rfloor/2 +1}).
 \end{align*}
This completes the inductive step.
\end{proof}

Proposition \ref{prop:correct_decay_single_rh} and Corollary \ref{cor:correct_decay_partial_rh} are sufficient to deal with terms which contain one $r_h$ or $\tilde r_h$, or their derivative. We will also be needing to be able to handle terms that involve products of $r_h$ with itself or with $\tilde r_h$, and their derivatives. To this aim, we prove the following:
{\color{black}
\begin{lemma}\label{lem:polynomial_rh_estims}
 
  Let $L(r_h,\tilde r_h)$ be a polynomial in $r_h$ and $\tilde r_h$. 
  \begin{enumerate}
   \item[(1)] If $\deg(f)=4$, $f\in C^\infty(M\setminus\{z_0\})$ with $\supp(f) \subset\subset M$ ,and order of $L(r_h,\tilde r_h)$ is $\geq 1$, then \[ \int e^{4i\psi/h}f L(r_h,\tilde r_h)=o(h^3).
  \]
  \item[(2)] If $\deg(f)=3$, $f\in C^\infty(M\setminus \{z_0\}; T^*_{0,1}M)\cup C^\infty(M\setminus \{z_0\}; T^*_{1,0}M)$ with $\supp(f) \subset\subset M$, then  
  \[
  \int e^{4i\psi/h}\star\left(f \wedge\p r_h\right)L(r_h,\tilde r_h)=o(h^2), \quad  \int e^{4i\psi/h}\star\left(f \wedge\op \tilde r_h\right) L(r_h,\tilde r_h)=o(h^2).
  \]
 \end{enumerate}
\end{lemma}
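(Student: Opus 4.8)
The plan is to reduce both parts to the single-remainder estimates in Proposition~\ref{prop:correct_decay_single_rh} and Corollary~\ref{cor:correct_decay_partial_rh}. First I would expand $L(r_h,\tilde r_h)$ into a finite sum of monomials $c\,r_h^{\,j}\tilde r_h^{\,k}$ and bound each separately. In part (1) the monomials of order one are a constant times $r_h$ or $\tilde r_h$, and since $\deg(f)=4$ Proposition~\ref{prop:correct_decay_single_rh} (together with its $\p_\psi^{-1}$ analogue for $\tilde r_h$) gives $\int e^{4i\psi/h}f\,r_h=o(h^{\lfloor 4/2\rfloor+1})=o(h^{3})$. In part (2) the order-zero term of $L$ contributes $\int e^{4i\psi/h}\star(f\wedge\p r_h)$; expressing $\star(f\wedge\p r_h)$ in local coordinates as a constant multiple of a degree-$3$ function times $\p_z r_h$, Corollary~\ref{cor:correct_decay_partial_rh} with $l=1$ gives $o(h^{\lfloor(3-1)/2\rfloor+1})=o(h^{2})$, and the $\op\tilde r_h$ variant is symmetric. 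So in both parts only the monomials containing $m\ge 2$ remainder factors remain.

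For such a monomial I would iterate the argument of Proposition~\ref{prop:correct_decay_single_rh}. Singling out one remainder factor and writing it via \eqref{eq: formula of rh} as $-\op_\psi^{-1}s_h$ (resp.\ $\tilde r_h=-\p_\psi^{-1}\tilde s_h$), an integration by parts transfers $\op_\psi^{-1}$ off that factor and leaves $s_h$ --- which satisfies $\|s_h\|_{L^2}=O(h^{1/2+\eps})$ by \eqref{eq:rh_L2_estim} and whose Neumann series can be truncated to finitely many terms using $\|T_h\|_{L^2\to L^2}=O(h^{1/2-\eps})$, as in \eqref{eq: sum plus remainder} --- paired against the transform of $e^{4i\psi/h}$ times $f$ times the remaining $m-1\ge1$ remainder factors. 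The oscillation $e^{4i\psi/h}$ and the $e^{-2i\psi/h}$ built into $\op_\psi^{-1}$ combine to leave $e^{2i\psi/h}$, and one then iterates the adjoint manipulations from \eqref{eq: first ibp} to \eqref{int of tilde f0 with Th}, at each stage keeping the fixed function $f$ --- which vanishes at the critical point $z_0$ to the definite order $\deg(f)=4$ (resp.\ $3$) --- separated from the $h$-dependent remainders, so that the expansion Lemmas~\ref{lem: iuy modified} and \ref{lem:expansion}--\ref{lem:expansion3} may be applied to $f$. These produce the factor $h^{\lfloor\deg(f)/2\rfloor+1}$, while the $m-1$ surplus remainders are estimated in $L^r$ for $r$ close to $2$ by the Calder\'on--Zygmund bounds \eqref{eq:CZ_rh_norms} (each an extra $h^{1/r+\eps_r}$) and the final $s_h$ by $h^{1/2+\eps}$; summing exponents, the total decay exceeds $h^{3}$ in part (1). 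For part (2) I would first integrate by parts in the phase, as in the proof of Corollary~\ref{cor:correct_decay_partial_rh}, to absorb the derivative $\p r_h$ at the cost of an $h^{-1}$ but simultaneously raising by one the degree of the fixed function that multiplies the remainders (through the factor $\p\psi$); after that reduction the same argument yields decay beyond $h^{2}$.

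The main obstacle --- and the reason the single-remainder results cannot be invoked verbatim --- is that a product $r_h^{\,j}\tilde r_h^{\,k}$ with $j+k\ge2$ is genuinely $h$-dependent and has no well-defined degree, so one cannot feed it directly into the expansion lemmas: the decay has to be squeezed out of the fixed high-degree factor $f$ alone, and one must carefully arrange the integrations by parts so that $f$ is never swallowed into an operator together with a remainder. The second delicacy is the $\eps$-losses in \eqref{eq:rh_Lr_interpolation_estimate} and \eqref{eq:CZ_rh_norms}, which force every Lebesgue exponent appearing in the H\"older estimates to be strictly larger than $2$; one must check that the gain from $f$ (worth $\approx h^{2}$ in part (1), and $\approx h^{3/2}$ in part (2) after the derivative is absorbed) together with a genuine $h^{1/2+\eps}$ from each of the $m-1$ surplus remainders and from $s_h$ clears the thresholds $h^{3}$ and $h^{2}$ with enough margin to swallow those losses --- which is precisely why the hypotheses fix $\deg(f)$ at $4$ and $3$ and not at a smaller value.
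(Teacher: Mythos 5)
Your reduction to monomials and the order-$1$ case (just Proposition~\ref{prop:correct_decay_single_rh} and Corollary~\ref{cor:correct_decay_partial_rh}) match the paper. The difficulty is entirely in the order $m\ge 2$ case, and there the route you outline does not go through.

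You propose to single out one factor $r_h=-\op_\psi^{-1}s_h$ and ``transfer $\op_\psi^{-1}$ off that factor'' by duality, leaving $s_h$ paired against the image of $e^{4i\psi/h}f\,G$ (with $G$ the remaining $m-1$ remainders) under the adjoint, and then to ``iterate the adjoint manipulations'' of Proposition~\ref{prop:correct_decay_single_rh}, ``keeping $f$ separated from the $h$-dependent remainders so that the expansion Lemmas may be applied to $f$.'' But the moment you transfer, the nonlocal operator $\op^{-1}$ (equivalently $\p^{*-1}$) lands on the \emph{entire} product $e^{4i\psi/h}fG$, and it does not factor as something applied to $f$ times $G$. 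There is no way to peel $G$ back off; $fG$ is $h$-dependent, has no well-defined degree, and is not $h$-homogeneous, so Lemmas~\ref{lem: iuy modified}, \ref{lem:expansion}--\ref{lem:expansion3} cannot be applied to it. You raise exactly this obstruction yourself (``$f$ must not be swallowed into an operator together with a remainder'') but the integration by parts you propose is precisely a step that swallows it; you supply no device that prevents this.

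The paper avoids this by never transferring $\op_\psi^{-1}$ at all when $m\ge 2$. Instead it integrates by parts in the phase \emph{twice}, gaining the factor $(h/4i)^2$ and producing local differential operators $\bar\partial^*$, $\partial^*$ acting on the integrand. Because these are local differential operators, Leibniz keeps $f$ (and the explicit factors $1/\bar\partial\psi$, $1/\partial\psi$) cleanly separated from $L(r_h,\tilde r_h)$; all terms where a derivative hits $f/|\partial\psi|^2$ are disposed of by H\"older together with the Calder\'on--Zygmund bounds \eqref{eq:CZ_rh_norms}. The one remaining dangerous term involves $\bar\partial^*\bar\partial r_h$ (or $\partial^*\partial\tilde r_h$), which is handled not by another duality but by using the equation $r_h$ solves, i.e.\ the explicit identity $\bar\partial^*\bar\partial r_h = qa - e^{-2i\psi/h}\star(2ih^{-1}\partial\psi\wedge s_h) + e^{-2i\psi/h}qr_h$, which extracts the factor $s_h$ (and one power of $h^{-1}$) without applying any nonlocal operator to a mixed product. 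That structural move --- phase integration by parts plus the PDE for $r_h$, rather than operator transfer plus expansion lemmas --- is the idea missing from your argument.
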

\begin{proof}

{\color{black}
We will only show (1) as the proof of (2) is essentially the same. It suffices to prove the result for homogeneous monomials in $r_h$ and $\tilde r_h$. 
Note that in the case $L(r_h,\tilde r_h)$ is a monomial of order $1$, the result is a consequence of Proposition \ref{prop:correct_decay_single_rh}. 

In general, if $L(r_h,\tilde r_h)$ is a monomial of order $m\geq 2$, we have
\begin{multline*}
\int e^{4i\psi/h}f L(r_h,\tilde r_h)
=\left(\frac{h}{4i}\right)^2\int e^{4i\psi/h}\bar\partial^*\left[\frac{1}{\partial\psi}\partial^*\left(\frac{f}{\bar\partial\psi}L(r_h,\tilde r_h)\right)\right]\\
=\left(\frac{h}{4i}\right)^2\int e^{4i\psi/h}\left[ \bar\partial^*\left(\frac{1}{\partial\psi}\partial^*\left(\frac{f}{\bar\partial\psi}\right)\right)L(r_h,\tilde r_h)
+\partial^*\left(\frac{f}{\bar\partial\psi}\right)\star\left(\frac{1}{\partial\psi}\wedge\partial L(r_h,\tilde r_h)\right)\right.\\\left.
+\star\left(\partial\left(\frac{f}{|\partial\psi|^2}\right)\wedge\bar\partial L(r_h,\tilde r_h)\right)
+\frac{f}{|\partial\psi|^2}\bar\partial^*\bar\partial L(r_h,\tilde r_h)\right].
\end{multline*}
By \eqref{eq:CZ_rh_norms},
{\color{black}$
r_h,\tilde r_h, \partial r_h, \bar\partial r_h, \partial \tilde r_h, \bar\partial \tilde r_h=o_{L^c}(h^{1/c})$, for all $c\geq 2$.}
Using H\"older's inequality we conclude that
\begin{equation}
\label{main term in L}
\int e^{4i\psi/h}f L(r_h,\tilde r_h)
=\left(\frac{h}{4i}\right)^2\int e^{4i\psi/h}\frac{f}{|\partial\psi|^2}\bar\partial^*\bar\partial L(r_h,\tilde r_h)+o(h^3).
\end{equation}
Observe that
$$\bar\partial^*\bar\partial L(r_h,\tilde r_h) =  \tilde L( r_h,\tilde r_h, \partial r_h, \bar\partial r_h, \partial \tilde r_h, \bar\partial \tilde r_h) + \bar\partial^*\bar\partial r_h M(r_h, \tilde r_h) + \bar\partial^*\bar\partial \tilde r_h \widetilde M(r_h, \tilde r_h)$$
where $\tilde L$ is a sum of monomials all of which are of order $m$, while $M$ and $\widetilde M$ are sums of monomials of order $m-1$. Inserting this expression for $\bar\partial^*\bar\partial L$ into \eqref{main term in L} and applying H\"older's inequality as before, we get that 
\begin{eqnarray}
\label{main term in L'}\nonumber
\int e^{4i\psi/h}f L(r_h,\tilde r_h)
&=&\left(\frac{h}{4i}\right)^2\int e^{4i\psi/h}\frac{f}{|\partial\psi|^2}\bar\partial^*\bar\partial r_h M(r_h, \tilde r_h)\\&+&\left(\frac{h}{4i}\right)^2\int e^{4i\psi/h}\frac{f}{|\partial\psi|^2}\bar\partial^*\bar\partial\tilde r_h \widetilde M(r_h, \tilde r_h)+o(h^3).
\end{eqnarray}
Note that {\color{black}$\deg\left(\frac{f}{|\partial\psi|^2}\right) = 2$} and $\bar\partial^*\bar\partial r_h$ is given by
\begin{eqnarray*}
\bar\partial^*\bar\partial r_h
=
qa- e^{-2i\psi/h}\star \left(\frac{2i\partial\psi}{h}\wedge s_h\right) +e^{-2i\psi/h}qr_h
=qa-e^{-2i\psi/h}\star\left(\frac{2i\partial\psi}{h} \wedge s_h\right)+ o_{L^c}(h^{1/c}),
\end{eqnarray*}
for $c\geq 2$. A similar expression holding for $\bar\partial^*\bar\partial\tilde r_h = \partial^*\partial \tilde r_h$. Inserting these into \eqref{main term in L'} and using \eqref{eq:CZ_rh_norms} when $m >2$, or Proposition \ref{prop:correct_decay_single_rh} when $m = 2$, we get
\begin{eqnarray}\label{r-M-estimate}\nonumber
\int e^{4i\psi/h}f L(r_h,\tilde r_h)
&=&Ch\int e^{2i\psi/h}\frac{f}{|\partial\psi|^2}\star(\partial\psi\wedge s_h) M(r_h,\tilde r_h)+o(h)\\\nonumber
&=& Ch^2\int\bar\partial^*\left(  e^{2i\psi/h}\frac{f}{|\partial\psi|^2}s_h M(r_h,\tilde r_h) \right)\\\nonumber
&+&Ch^2\int e^{2i\psi/h}\left[\star  \left(\partial\left(\frac{f}{|\partial\psi|^2}\right)\wedge s_h\right) M(r_h,\tilde r_h)\right]\\&+& Ch^2\int e^{2i\psi/h}\left[\frac{f}{|\partial\psi|^2}(\bar\partial^* s_h) M(r_h,\tilde r_h)\right]\\\nonumber
&+& Ch^2\int e^{2i\psi/h}\left[\frac{f}{|\partial\psi|^2}\star(s_h\wedge \partial M(r_h,\tilde r_h))\right]+o(h^3).
\end{eqnarray}
The first term vanishes after integration by parts. The second term and the last term can be estimated using H\"older inequality and \eqref{eq:CZ_rh_norms} to show that they are $o(h^3)$. For the third term we can compute $\bar\partial^* s_h$ using \eqref{eq: def of sh} then apply H\"older inequality to show that it is also $o(h^3)$.

}

\end{proof}
}

\section{Asymptotic analysis of the integral identities}\label{Section_4}
In this section we analyse the integral identities corresponding to the second and third linearization. First let $\Phi_1=\Phi_2=\Phi$ where $\Phi$ is a holomorphic Morse function, and let $\Phi_3=-2\bar\Phi$. As described in section \ref{subsection:choices_of_solutions}, we construct solutions 
 \begin{align*}
 \begin{split}
 v^1&=e^{\Phi_1/h}(a+r_h), \\
 v^2&=e^{\Phi_2/h}(a+r_h), \\
 v^3&=e^{\Phi_3/h}(\overline a+\tilde r_h).
 \end{split}
\end{align*}
 We also denote $\hat v_j=e^{\Phi_j/h}a$, $j=1,2,3$. The first result of this section is:

\begin{proposition}\label{prop:asymptotics_for_the_2nd_lin}
 Let $(\Sigma,g)$ be a Riemannian surface with boundary. 
 Assume that $K$ is a $2$-tensor that vanishes to infinite order on the boundary. We have the expansion
 \begin{multline}
  \int_{\Sigma} v_1 K(\nabla v_2,\nabla v_3\big)dV+\int_{\Sigma}v_2 K(\nabla v_1,\nabla v_3\big)dV+\int_{\Sigma} v_3 K(\nabla v_1,\nabla v_2\big)dV= \\
  \int_{\Sigma} \hat v_1 K(\nabla \hat  v_2,\nabla \hat v_3\big)dV+\int_{\Sigma}\hat v_2 K(\nabla \hat v_1,\nabla \hat v_3\big)dV+\int_{\Sigma}\hat v_3 K(\nabla\hat v_1,\nabla \hat v_2\big)dV+o(1).
 \end{multline}
\end{proposition}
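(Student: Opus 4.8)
The plan is to substitute the CGOs into the three integrals on the left, expand multilinearly in the amplitudes, and observe that the only contribution not tending to $0$ is the one in which every factor is a principal part $\hat v_j$, which is exactly the right-hand side. Write $v_1 = \hat v_1 + e^{\Phi/h}r_h$, $v_2 = \hat v_2 + e^{\Phi/h}r_h$, $v_3 = \hat v_3 + e^{-2\bar\Phi/h}\tilde r_h$. Since $\Phi_1 + \Phi_2 + \Phi_3 = 2(\Phi-\bar\Phi) = 4i\psi$, every term produced carries the oscillatory factor $e^{4i\psi/h}$. Using $\nabla(e^{\Phi_j/h}b) = e^{\Phi_j/h}\big(h^{-1}(\nabla\Phi_j)b + \nabla b\big)$, each of the two gradients contributes a factor $h^{-1}$ precisely when it lands on the exponential, so each integrand is a finite sum $\sum_{m=0}^2 h^{-m}e^{4i\psi/h}c_m$, where $c_m$ is a smooth function times a polynomial of degree $\leq 3$ in $r_h,\tilde r_h,\partial r_h,\bar\partial r_h,\partial\tilde r_h,\bar\partial\tilde r_h$. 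Setting $r_h=\tilde r_h=0$ recovers $\hat v_1 K(\nabla\hat v_2,\nabla\hat v_3)+\hat v_2 K(\nabla\hat v_1,\nabla\hat v_3)+\hat v_3 K(\nabla\hat v_1,\nabla\hat v_2)$, i.e.\ the right-hand side, so it remains to show that every term containing at least one of $r_h,\tilde r_h,\partial r_h,\bar\partial r_h,\partial\tilde r_h,\bar\partial\tilde r_h$ integrates to $o(1)$.

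The structural input is the order of vanishing at the Morse critical point $z_0$ of the smooth part of $c_m$. When $m=2$ (both gradients on exponentials) the smooth part is, up to smooth bounded factors, $K(\nabla\Phi_i,\nabla\Phi_j)$; in local holomorphic coordinates $\nabla\Phi\propto\Phi'(z)\partial_{\bar z}$ and $\nabla\bar\Phi\propto\overline{\Phi'(z)}\partial_z$, so $K(\nabla\Phi,\nabla\bar\Phi)=\rho_1(\partial_z\psi)(\partial_{\bar z}\psi)$ and $K(\nabla\Phi,\nabla\Phi)=\rho_2(\partial_z\psi)^2$ with $\rho_1,\rho_2$ smooth: the $m=2$ coefficient vanishes to order $\geq 2$ at $z_0$ and \emph{factors through} $\partial\psi$. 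When $m=1$ the coefficient carries one $\nabla\Phi$ and vanishes to order $\geq 1$, and in fact to order $\geq N$ whenever a high-order amplitude ($1+O(z^N)$) is the one differentiated by the bare gradient. Because $K$ vanishes to infinite order on $\partial\Sigma$, each smooth coefficient extends by zero to a function in $C_c^\infty(M)$, so the CGO calculus of Section~\ref{sec:CGOs} applies and every integration by parts below has vanishing boundary term.

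With this in place the estimates split by the pair (number $m$ of $h^{-1}$ factors, number of correction factors). A term with exactly one correction is either $h^{-2}\int_\Sigma e^{4i\psi/h}f\,r_h\,dV$ with $\deg f\geq 2$ (undifferentiated correction, $m=2$), which is $o(h^2)$ by Proposition~\ref{prop:correct_decay_single_rh}; or $h^{-1}\int_\Sigma e^{4i\psi/h}f\,\partial r_h\,dV$ with $\deg f\geq 1$ ($m=1$), which is $o(h)$ by Corollary~\ref{cor:correct_decay_partial_rh}; or has a coefficient of degree $\geq N$ or $m=0$, in which case Proposition~\ref{prop:correct_decay_single_rh} gives an arbitrarily high power of $h$; all are $o(1)$ after the prefactor, and the $\tilde r_h$ cases follow by the analogous statements for $\partial_\psi^{-1}$. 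For a term with two or three corrections let $L$ be the product of the correction factors (order $\geq 2$); by \eqref{eq:CZ_rh_norms}, \eqref{eq:rh_Lr_interpolation_estimate} and Hölder's inequality, $\|L\|_{L^1(\Sigma)}=O(h^{1+\epsilon})$ for some $\epsilon>0$, so if $m\in\{0,1\}$ the term is $O(h^{1-m+\epsilon})=o(1)$.

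The only genuinely delicate case is a term with $m=2$ and two or three corrections, for which $h^{-2}\|L\|_{L^1}=O(h^{-1+\epsilon})$ is not $o(1)$; here the oscillation must be used. Since the coefficient factors through $\partial\psi$, I would use $(\partial_z\psi)e^{4i\psi/h}=\frac{h}{4i}\partial_z(e^{4i\psi/h})$ and integrate by parts \emph{once}; this gains a factor $h$, and in the remaining integrand $\partial_z$ either hits the surviving factor of $\partial_z\psi$, producing $\partial_z\partial_{\bar z}\psi=0$ (harmonicity of $\psi$) in the $K(\nabla\Phi,\nabla\bar\Phi)$ case or an extra $\partial_z^2\psi$ times $h$ (harmless) in the $K(\nabla\Phi,\nabla\Phi)$ case, or it hits $L$ (keeping its order $\geq 2$, controlled by \eqref{eq:CZ_rh_norms}), or it hits a smooth factor. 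In every case the resulting integral is $O(h^{1+\epsilon})$, so the term is $h^{-2}\cdot h\cdot O(h^{1+\epsilon})=O(h^\epsilon)=o(1)$. The main obstacle will be the bookkeeping: enumerating all the terms by the pair above and verifying the claimed vanishing orders at $z_0$, so that each one falls into the scope of Proposition~\ref{prop:correct_decay_single_rh}, Corollary~\ref{cor:correct_decay_partial_rh}, or the one-step integration by parts; the individual estimates are then immediate.
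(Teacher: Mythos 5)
Your proposal matches the paper's proof: the paper likewise expands by powers of $h^{-1}$ (writing the relevant integrand as $A_h+B_h+C_h$), disposes of single-correction terms via Proposition \ref{prop:correct_decay_single_rh} and Corollary \ref{cor:correct_decay_partial_rh}, handles multi-correction terms at order $h^{-1}$ or $h^0$ by H\"older and \eqref{eq:CZ_rh_norms}, and for multi-correction terms at order $h^{-2}$ does exactly your one integration by parts using $\partial\Phi_2\s e^{4i\psi/h}=\tfrac{h}{2}\partial(e^{4i\psi/h})$ followed by the same $L^1$ estimate on the order-$\geq2$ product of corrections and its derivative. Your appeal to harmonicity of $\psi$ in that last case is superfluous (and the ``times $h$'' remark is a slip) — once the prefactor is reduced to $h^{-1}$, the $O(h^{1+\eps})$ bound on $\|L\|_{L^1}$ and $\|\partial L\|_{L^1}$ already gives $o(1)$ regardless of which smooth factor $\partial$ lands on — but this does not affect the correctness of the argument.
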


Let us then present the other result of this section. Let $\Phi$ be a holomorphic Morse function with a critical point at $P$, and $a$ a holomorphic function that is equal to $1$ to high order in a neighborhood of $P$ and vanishes to high order close to all other critical points of $\Phi$. Let
\begin{align*}
    \begin{split}
    v_1&=v_3=e^{\Phi/h}(a+r_h), \\
    v_2&=v_4=e^{-\overline \Phi/h}(\overline a+\tilde r_h)
    \end{split}
    \end{align*}
    be solutions to the linearized equation as in \eqref{eq:sols_for_4th}.
\begin{proposition}\label{prop:asymptotics_for_the_3rd_lin}
 Let $(\Sigma,g)$ be a Riemannian surface with boundary.  Assume that $Q$ vanishes to infinite order on the boundary. We have the expansion 
 \begin{multline*}
  0=\int_{\Sigma}Q\Big[g(\nabla v_1\cdot \nabla v_2)g(\nabla v_3\cdot \nabla  v_4)+g(\nabla v_1\cdot \nabla v_3)g(\nabla v_2\cdot \nabla  v_4) \\
 +g(\nabla v_2\cdot \nabla v_3)g(\nabla v_1\cdot \nabla  v_4)\Big]dV=ch^{-1}Q(P)+o(h^{-1}),
 \end{multline*}
 where $c\neq 0$ is a constant.
\end{proposition}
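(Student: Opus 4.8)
The content of the proposition is the second equality — the asymptotic evaluation of the integral; the vanishing of the left‑hand side is the third‑order integral identity of Lemma~\ref{Lem:Integral identity_3rd} after the terms $H,R,B$ there are accounted for, which I take as given.

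\emph{Set‑up and the leading term.} Since $a$ vanishes to high order at every critical point of $\Phi$ other than $P$ and $\psi=\Im\Phi$ has no critical point off the critical set of $\Phi$, only a neighbourhood of $P$ contributes (elsewhere one gains any power of $h$ by non‑stationary phase, resp.\ by the high‑order vanishing of $a$), and as $Q$ vanishes to infinite order on $\p\Sigma$ it extends by zero to a function supported in $\Sigma\subset\subset M$, so that all integrals below are of the type treated in Section~\ref{sec:CGOs}. Work in a holomorphic coordinate $z$ centred at $P$ with $g=e^{2\mu}\abs{dz}^2$, so $g(\nabla u,\nabla w)=2e^{-2\mu}(\p u\,\op w+\op u\,\p w)$ and $dV_g=e^{2\mu}dx\,dy$. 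Because $v_1=v_3$ and $v_2=v_4$, the bracket equals $2\,[g(\nabla v_1,\nabla v_2)]^2+g(\nabla v_1,\nabla v_1)\,g(\nabla v_2,\nabla v_2)$. Using holomorphicity of $\Phi,a$ and antiholomorphicity of $\overline\Phi,\overline a$,
\[
\p v_1=e^{\Phi/h}\Big(\tfrac{\p\Phi}{h}(a+r_h)+\p(a+r_h)\Big),\qquad \op v_1=e^{\Phi/h}\op r_h,
\]
\[
\op v_2=e^{-\overline\Phi/h}\Big(-\tfrac{\overline{\p\Phi}}{h}(\overline a+\tilde r_h)+\op(\overline a+\tilde r_h)\Big),\qquad \p v_2=e^{-\overline\Phi/h}\p\tilde r_h.
\]
Each factor contributes the phase $e^{\Phi/h}$ for an index in $\{1,3\}$ and $e^{-\overline\Phi/h}$ for an index in $\{2,4\}$, so every relevant product carries the common oscillation $e^{4i\psi/h}$. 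The only term of $[g(\nabla v_1,\nabla v_2)]^2$ containing \emph{no} remainder factor $r_h,\tilde r_h,\p r_h,\op r_h,\p\tilde r_h,\op\tilde r_h$ is $4e^{-4\mu}h^{-4}\abs{\p\Phi}^4\abs{a}^4e^{4i\psi/h}$, and $g(\nabla v_1,\nabla v_1)g(\nabla v_2,\nabla v_2)$ contains no such term (every summand has a factor $\op r_h\,\p\tilde r_h$).

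\emph{Stationary phase on the leading term.} The two copies of $[g(\nabla v_1,\nabla v_2)]^2$ in the bracket contribute
\[
\frac{8}{h^4}\int Q\,e^{-2\mu}\abs{\p\Phi}^4\abs{a}^4\,e^{4i\psi/h}\,dx\,dy .
\]
Here $b:=Q e^{-2\mu}\abs{\p\Phi}^4\abs{a}^4$ is smooth and compactly supported in $M$; since $\p\Phi=\Phi''(P)(z-z_0)+O(\abs{z-z_0}^2)$ with $\Phi''(P)\neq0$ ($\Phi$ Morse) and $a=1+O(z^N)$, it vanishes to order exactly $4$ at the non‑degenerate critical point $z_0$ of $\psi$, with $\p_z^2\p_{\z}^2 b(z_0)=8\abs{\Phi''(P)}^4e^{-2\mu(P)}Q(P)$. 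Hence the $O(h^{-1})$ and $O(h^{-2})$ contributions in the stationary‑phase expansion of the displayed integral vanish, the first surviving one is $O(h^{3})$ and equals a nonzero universal constant (depending only on $\Phi''(P)$) times $\p_z^2\p_{\z}^2 b(z_0)$; multiplying by $h^{-4}$ gives $c\,h^{-1}Q(P)+o(h^{-1})$ with $c\neq0$ depending only on $\Phi$ and $\mu$ at $P$.

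\emph{Error terms.} It remains to show every remaining summand is $o(h^{-1})$, which I would organise by the power $h^{-k}$ carried. Terms with $k\in\{3,4\}$ occur only in $[g(\nabla v_1,\nabla v_2)]^2$, namely from $\alpha_2^2$ and $2\alpha_2\alpha_1$, where $\alpha_2=-h^{-2}\abs{\p\Phi}^2(a+r_h)(\overline a+\tilde r_h)$ and $\alpha_1$ collects the $h^{-1}$ part of the amplitude of $\p v_1\,\op v_2$; one checks such a term is a smooth coefficient of degree $\geq k$ at $z_0$ (supplied by the powers of $\abs{\p\Phi}$) times a polynomial in $r_h,\tilde r_h$ and \emph{at most one} of $\p r_h,\op\tilde r_h$, terms with $\p a$ or $\op\overline a$ being negligible as $a=1+O(z^N)$. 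After writing the coefficient as a $(0,1)$‑ or $(1,0)$‑form of the stated degree at $z_0$, Proposition~\ref{prop:correct_decay_single_rh}, Corollary~\ref{cor:correct_decay_partial_rh} and Lemma~\ref{lem:polynomial_rh_estims}(1)--(2) bound these by $o(h^{k-1})$, hence by $o(h^{-1})$ after division by $h^k$. Every summand with $k\leq2$ carries at least two remainder factors (for $k=2$, in fact two differentiated ones, such as $\p r_h\,\op\tilde r_h$ or $\op r_h\,\p\tilde r_h$), each $O_{L^c}(h^{1/c+\eps_c})$ for all $c\in[2,\infty)$ by \eqref{eq:CZ_rh_norms}; by H\"older's inequality such a summand is $h^{-2}O(h^{1+\eps})=o(h^{-1})$. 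The same H\"older estimate disposes of the whole term $g(\nabla v_1,\nabla v_1)g(\nabla v_2,\nabla v_2)$, each of whose summands has $k\leq 2$ and carries $\op r_h\,\p\tilde r_h$. Adding the leading term to all error terms gives the claim.

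\emph{Main obstacle.} The delicate point is the $k=3$ and $k=4$ error terms: a crude H\"older bound loses there (it would yield only $O(h^{-5/2})$ or worse), so the refined decay of $\int e^{4i\psi/h}f\,\p^l r_h$ and $\int e^{4i\psi/h}f\,L(r_h,\tilde r_h)$ from Section~\ref{sec:CGOs} is genuinely needed, and one must verify that the coefficient functions produced by the expansion really vanish to the advertised order at $z_0$ --- which is exactly where the simple zero of $\p\Phi$ at the Morse critical point and the normalisation $a=1+O(z^N)$ are used.
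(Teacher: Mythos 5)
Your proposal is essentially correct and follows the same strategy as the paper's proof: expand $\partial v_1\,\overline\partial v_2$ in powers of $h^{-1}$ (your $\alpha_2,\alpha_1,\alpha_0$ are precisely the paper's $H_2,H_1,H_0$), extract the unique remainder-free $h^{-4}$ piece $|\p\Phi|^4|a|^4$, evaluate it by stationary phase using the degree-$4$ vanishing at the Morse critical point, and dispatch every other piece via the nonlinear CGO calculus (Proposition~\ref{prop:correct_decay_single_rh}, Corollary~\ref{cor:correct_decay_partial_rh}, Lemma~\ref{lem:polynomial_rh_estims}) for the $h^{-3},h^{-4}$ error terms and a crude Hölder bound with \eqref{eq:CZ_rh_norms} for $k\leq 2$. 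The only cosmetic difference is the order of decomposition: you first group the bracket as $2[g(\nabla v_1,\nabla v_2)]^2 + g(\nabla v_1,\nabla v_1)g(\nabla v_2,\nabla v_2)$ and then break into $\partial$/$\overline\partial$ pieces, whereas the paper first rewrites everything in terms of $(\p v_1\op v_2)^2$, $\p v_1\p v_2\op v_1\op v_2$, and $(\op v_1\p v_2)^2$ and handles each separately; after expansion the two bookkeeping schemes produce the same list of terms and invoke the same lemmas, so nothing of substance differs.

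One small omission worth flagging: like the paper, you should note explicitly that the stationary-phase localization to the chart containing $P$ requires the partition-of-unity argument (the $\chi_\alpha$'s in the paper), and that at auxiliary critical points of $\Phi$ the amplitude $a$ vanishes to high order so those contributions are $O(h^\infty)$; you gesture at this in your opening sentence, and that suffices, but a reader should not overlook that the degree-$4$ statement about $b$ is special to the chart of $P$. Also, your numeric constant $8|\Phi''(P)|^4 e^{-2\mu(P)}Q(P)$ for $\p_z^2\p_{\bar z}^2 b(z_0)$ is off by a factor (it should be $4$ from $\p_z^2\p_{\bar z}^2(z^2\bar z^2)=4$), but since the proposition only asserts $c\neq 0$ this has no bearing on correctness.
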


\subsection{Proof of Proposition \ref{prop:asymptotics_for_the_2nd_lin}}

It suffices to verify that 
\begin{eqnarray}
\label{eq: first term with K_cc}
\int_{\Sigma}v_1K(\nabla v_2, \nabla v_3)d V - \int_{\Sigma}\hat v_1K(\nabla\hat  v_2, \nabla\hat v_3)d V = o(1)
\end{eqnarray}
as the other terms in \eqref{prop:asymptotics_for_the_2nd_lin} can be analysed by the  same procedure. Note that
\begin{equation*}
\nabla v_1=\left(h^{-1}\nabla\Phi_1(a+r_h)+\nabla a+\nabla r_h\right) e^{\Phi_1/h},
\end{equation*}
with similar expressions holding for $v_2$ and $v_3$. Plugging these into the left hand side of \eqref{eq: first term with K_cc} and grouping the terms with the same power of $h$ as a factor, we have
\begin{equation*}
\int_{\Sigma}v_1K(\nabla v_2, \nabla v_3)d V - \int_{\Sigma}\hat v_1K(\nabla\hat  v_2, \nabla\hat v_3)d V=A_h+B_h+C_h,
\end{equation*}
where
\begin{equation*}
A_h=h^{-2}\int e^{4i\psi/h}\left[ 2|a|^2 r_h+a^2\tilde r_h+\bar a r_h^2+2a r_h\tilde r_h+r_h^2\tilde r_h
  \right]K(\nabla\Phi_2,\nabla\Phi_3)d V,
\end{equation*}
\begin{multline*}
B_h=h^{-1}\int e^{4i\psi/h}\left[a^2K(\nabla\Phi_2,\nabla\tilde r_h)+(2ar_h+r_h^2)K(\nabla\Phi_2,\nabla(\bar a+\tilde r_h)) \right.\\
+\left.|a|^2K(\nabla r_h,\nabla\Phi_3)+(a\tilde r_h+\bar a r_h+r_h\tilde r_h)K(\nabla(a+r_h),\nabla\Phi_3)   \right]d V
\end{multline*}
and
\begin{multline*}
C_h=\int e^{4i\psi/h}\left[ r_h K(\nabla(a+r_h),\nabla(\bar a+\tilde r_h))\right.\\ +\left.aK(\nabla r_h,\nabla\bar a)+aK(\nabla a,\nabla\tilde r_h)+aK(\nabla r_h,\nabla\tilde r_h)  \right] dV.
\end{multline*}
Since $\nabla a$  vanishes to high order at all critical points of $\psi$, notice that we can use Lemma \ref{lem:polynomial_rh_estims} to eliminate it from the expressions of $B_h$ and $C_h$ to obtain
\begin{multline*}
B_h=h^{-1}\int e^{4i\psi/h}\left[a^2K(\nabla\Phi_2,\nabla\tilde r_h)+(2ar_h+r_h^2)K(\nabla\Phi_2,\nabla\tilde r_h) \right.\\
+\left.|a|^2K(\nabla r_h,\nabla\Phi_3)+(a\tilde r_h+\bar ar_h+r_h\tilde r_h)K(\nabla r_h,\nabla\Phi_3)   \right]d V+o(1),
\end{multline*}
and
\begin{equation*}
C_h=\int e^{4i\psi/h}\left[ r_h K(\nabla r_h,\nabla \tilde r_h)+aK(\nabla r_h,\nabla\tilde r_h)  \right] dV+o(1).
\end{equation*}

Recalling (see the estimates \eqref{eq:CZ_rh_norms}) that
\begin{equation}\label{eq:recall_rh_norms_cc}
||r_h||_{L^r}, ||\tilde r_h||_{L^r}, ||\partial r_h||_{L^r}, ||\bar\partial r_h||_{L^r}, ||\partial\tilde r_h||_{L^r}, ||\bar\partial\tilde r_h||_{L^r}=O(h^{1/r+\epsilon_r})
\end{equation}
for $r\geq 2$, 
we can immediately conclude that 
\begin{equation*}
C_h=o(1),
\end{equation*}
and also that
\begin{equation*}
B_h=h^{-1}\int e^{4i\psi/h}\left[ a^2K(\nabla\Phi_2,\nabla\tilde r_h)+|a|^2K(\nabla r_h,\nabla\Phi_3)\right]dV+o(1).
\end{equation*}
Since $\deg(\nabla\Phi_2)=\deg(\nabla\Phi_3)=1$, we can apply Corollary \ref{cor:correct_decay_partial_rh} to the remaining two terms, which then gives
\begin{equation*}
B_h=o(1).
\end{equation*}
Since $\deg(K(\nabla\Phi_2,\nabla\Phi_3))=2$, the same Corollary \ref{cor:correct_decay_partial_rh} gives that 
\begin{equation*}
h^{-2}\int e^{4i\psi/h}\left(2|a|^2 r_h+a^2\tilde r_h\right)K(\nabla\Phi_2,\nabla\Phi_3)dV=o(1)
\end{equation*}
for the first two terms appearing in $A_h$. 

Let $\chi_\alpha$, $\alpha=0,\ldots, M$, be a partition of unity for $\Sigma$, with each $\chi_\alpha$ supported in an isothermal coordinate chart. In order to complete the estimate for $A_h$, note that in each of these charts there is a function $\kappa_\alpha$ so that in local coordinates we have 
\begin{equation*}
K(\nabla\Phi_2,\nabla\Phi_3)=\kappa_\alpha\partial\Phi_2\bar\partial\Phi_3.
\end{equation*}
Using local coordinates in each of the following integrals, we compute
\begin{multline*}
A_h=\sum_{\alpha=0}^Mh^{-2}\int \chi_\alpha e^{4i\psi/h}\left(\bar a r_h^2+2a r_h\tilde r_h+r_h^2\tilde r_h\right)\kappa_\alpha\partial\Phi_2\bar\partial\Phi_3 dV\\
=\frac{1}{2}h^{-1}\sum_{\alpha=0}^M\int\left(\partial e^{4i\psi/h}\right)\left(\bar a r_h^2+2a r_h\tilde r_h+r_h^2\tilde r_h\right)\chi_\alpha\kappa_\alpha\bar\partial\Phi_3 dV+o(1)\\
=-\frac{1}{2}h^{-1}\sum_{\alpha=0}^M\int e^{4i\psi/h}\bar\partial\Phi_3\partial\left[\chi_\alpha\kappa_\alpha \left(\bar a r_h^2+2a r_h\tilde r_h+r_h^2\tilde r_h\right)\right] dV+o(1)=o(1),
\end{multline*}
where the last step follows by the estimates \eqref{eq:recall_rh_norms_cc}. In the second equality we used $\p \psi=(\Phi_2-\overline \Phi_2)/2$. Recall that $K$ is assumed to vanish to infinite order on the boundary $\partial\Sigma$, so the integration by parts in the computation above works the same in interior charts as well as in those that intersect the boundary.

\subsection{Proof of Proposition \ref{prop:asymptotics_for_the_3rd_lin}}
We prove Proposition \ref{prop:asymptotics_for_the_3rd_lin}. Let us cover $\Sigma$ with finitely many isothermal coordinate charts. Let $\chi_\alpha$, $\alpha=0,\ldots, M$, be a partition of unity associated with this covering, where the labeling is chosen so that the chart containing $P$ corresponds to $\alpha=0$. We write the integral identity 
\begin{multline}\label{eq:3rd_ord_integral_identity_proof}
0=\int_{\Sigma}Q\Big[g(\nabla v_1\cdot \nabla v_2)g(\nabla v_3\cdot \nabla  v_4)+g(\nabla v_1\cdot \nabla v_3)g(\nabla v_2\cdot \nabla  v_4) \\
 +g(\nabla v_2\cdot \nabla v_3)g(\nabla v_1\cdot \nabla  v_4)\Big]dV
 \end{multline}
 in terms of $\p$ and $\op$ operators. 

In local holomorphic coordinates, we have for some conformal factor $\gamma$ that $g=\gamma\s I_{2\times 2}$ and thus
\begin{equation}
\nabla u\cdot\nabla w=g(\nabla u,\nabla w)=2\gamma^{-1}(\partial u\bar\partial w+\bar\partial u\partial w).
\end{equation}
By using this, we obtain
\begin{multline*}
4^{-1}\gamma^2(\nabla v_1\cdot \nabla v_2)(\nabla v_3\cdot\nabla v_4)\\[5pt]
= \partial v_1\bar\partial v_2\partial v_3\bar\partial v_4
+\partial v_1\bar\partial v_2\bar\partial v_3\partial v_4
+\bar\partial v_1\partial v_2\partial v_3\bar\partial v_4
+\bar\partial v_1\partial v_2\bar\partial v_3\partial v_4,
\end{multline*}
\begin{multline*}
4^{-1}\gamma^2(\nabla v_1\cdot\nabla v_3)(\nabla v_2\cdot\nabla v_4)\\[5pt]
= \partial v_1\bar\partial v_3\partial v_2\bar\partial v_4
+\partial v_1\bar\partial v_3\bar\partial v_2\partial v_4
+\bar\partial v_1\partial v_3\partial v_2\bar\partial v_4
+\bar\partial v_1\partial v_3\bar\partial v_2\partial v_4,
\end{multline*}
\begin{multline*}
4^{-1}\gamma^2(\nabla v_1\cdot\nabla v_4)(\nabla v_2\cdot\nabla v_3)\\[5pt]
= \partial v_1\bar\partial v_4\partial v_2\bar\partial v_3
+\partial v_1\bar\partial v_4\bar\partial v_2\partial v_3
+\bar\partial v_1\partial v_4\partial v_2\bar\partial v_3
+\bar\partial v_1\partial v_4\bar\partial v_2\partial v_3.
\end{multline*}
Adding the above together yields
\begin{multline*}
4^{-1}\gamma^2 \left[(\nabla v_1\cdot \nabla v_2)(\nabla v_3\cdot\nabla v_4)\right.\\[5pt]\left.+(\nabla v_1\cdot\nabla v_3)(\nabla v_2\cdot\nabla v_4)+(\nabla v_1\cdot\nabla v_4)(\nabla v_2\cdot\nabla v_3)\right]\\[5pt]
=2\partial v_3\bar\partial v_4\left(\partial v_1\bar\partial v_2+\bar\partial v_1\partial v_2\right)
+2\bar\partial v_3\partial v_4\left(\partial v_1\bar\partial v_2+\bar\partial v_1\partial v_2\right)\\[5pt]
+2\partial v_3\partial v_4\bar\partial v_1\bar\partial v_2
+2\bar\partial v_3\bar\partial v_4\partial v_1\partial v_2,
\end{multline*}
so that
\begin{multline*}
(\nabla v_1\cdot \nabla v_2)(\nabla v_3\cdot\nabla v_4)+(\nabla v_1\cdot\nabla v_3)(\nabla v_2\cdot\nabla v_4)+(\nabla v_1\cdot\nabla v_4)(\nabla v_2\cdot\nabla v_3)\\[5pt]
=4\gamma^{-2}(\partial v_1\bar\partial v_2\partial v_3\bar\partial v_4
+\partial v_1\bar\partial v_2\bar\partial v_3\partial v_4
+\bar\partial v_1\partial v_2\partial v_3\bar\partial v_4
+\bar\partial v_1\partial v_2\bar\partial v_3\partial v_4) \\
+ 8\gamma^{-2}\left( \partial v_3\partial v_4\bar\partial v_1\bar\partial v_2
+\bar\partial v_3\bar\partial v_4\partial v_1\partial v_2\right).
\end{multline*}

Let us then substitute $v_1=v_3$ and $v_2=v_4$ to the above. This gives
\begin{multline*}
 (\nabla v_1\cdot \nabla v_2)(\nabla v_3\cdot\nabla v_4)+(\nabla v_1\cdot\nabla v_3)(\nabla v_2\cdot\nabla v_4)+(\nabla v_1\cdot\nabla v_4)(\nabla v_2\cdot\nabla v_3) \\
 =4\gamma^{-2}((\p v_1\overline \p v_2)^2+2\p v_1\overline \p v_1 \p v_2 \overline \p v_2 + (\overline \p v_1\p v_2)^2)+ 16\gamma^{-2}(\p v_1 \p v_2 \overline \p v_1 \overline \p v_2) \\
 =\gamma^{-2}(4(\p v_1\overline \p v_2)^2 + 24 \p v_1 \p v_2 \overline \p v_1 \overline \p v_2 + 4 (\overline \p v_1\p v_2)^2).
\end{multline*}
Thus we have three different types of terms,
\[
 (\p v_1\overline \p v_2)^2, \quad \p v_1 \p v_2 \overline \p v_1 \overline \p v_2 \ \text{ and } \ (\overline \p v_1\p v_2)^2
\]
to consider. We simplify notation by writing 
\[
Q_\gamma:=Q\gamma^{-2}. 
\]

\subsubsection{The term  $(\p v_1\overline \p v_2)^2$.}\label{sec:leading_term}
We will see that the term
\[
 (\p v_1\overline \p v_2)^2
\]
produces the largest contribution of size $O(h^{-1})$ to the integral identity \eqref{eq:3rd_ord_integral_identity_proof}. 
To analyze this term, we start by computing
\begin{multline*}
 \p v_1\overline \p v_2=\p(e^{\Phi/h}(a+r_h))\overline\p(e^{-\overline\Phi/h}(\overline a+\tilde r_h)) \\
 =e^{2i\psi/h}\left(-h^{-2}\abs{\p\Phi}^2(a+r_h)(\overline a+\tilde r_h) + h^{-1} \p\Phi (a+r_h) \op(\overline a+\tilde r_h)\right. \\
 -h^{-1} \op\overline\Phi \p(a+r_h) (\overline a+\tilde r_h) 
 \left.+ \p(a+r_h) \op(\overline a+\tilde r_h) \right).
\end{multline*}
We write 
\begin{align*}
 H_2&=-\abs{\p\Phi}^2(a+r_h)(\overline a+\tilde r_h), \\
 H_1&=\p\Phi (a+r_h) \op(\overline a+\tilde r_h)-\op\overline\Phi \p(a+r_h) (\overline a+\tilde r_h), \\
 H_0&=\p(a+r_h) \op(\overline a+\tilde r_h) .
\end{align*}
Then we have
\begin{align}\label{eq:Hformula}
 (\p v_1\overline \p v_2)^2=e^{4i\psi/h}\left(h^{-4}H_2^2+h^{-3}H_1H_2+h^{-2}H_0H_2+h^{-2}H_1^2+h^{-1}H_0H_1+H_0^2\right).
\end{align}

Below $L(r_h,\tilde r_h)$ will denote an unspecified polynomial in $r_h$ and $\tilde r_h$, and  $L_1(r_h,\tilde r_h)$ will denote an unspecified polynomial in $r_h$ and $\tilde r_h$ of degree $\geq 1$. We may write 
\[
 H_2^2=-\abs{\p \Phi}^4\abs{a}^4+f_{22}L_1(r_h,\tilde r_h),
\]
where $\deg(f_{22})=4$. Thus by Lemma \ref{lem:polynomial_rh_estims} part a), we have 
\[
 h^{-4}\int e^{4i\psi/h}\chi_0 Q_\gamma H_2^2=-h^{-4}\int e^{4i\psi/h} \chi_0 Q\abs{\p \Phi}^4\abs{a}^4 + o(h^{-1}).
\]
Recall that $\chi_0$ is corresponds to the chart containing $P$. 
By stationary phase (see e.g. \cite[Theorem 7.7.5]{hormander2015analysis}), we have
\[
 -h^{-4}\int e^{4i\psi/h}e^{4i\psi/h}  \chi_0  Q_\gamma\abs{\p \Phi}^4\abs{a}^4=ch^{-1}Q(P)+O(1),
\]
where $c\neq 0$. We see here the leading term $ch^{-1}Q_\gamma(P)$. 

For $\alpha \neq 0$, we have
\begin{equation}\label{eq:alpha_neq_zero}
 h^{-4}\int e^{4i\psi/h}\chi_\alpha Q_\gamma H_2^2=o(h^{-1}). 
\end{equation}
Indeed, for $\alpha\neq 0$ there are two options. The first option is that  the domain of $\chi_\alpha$  contains a critical point of $\varphi$. In this case, we may use above argument with the exception that $c=0$ since $a$ equals $0$ at the critical points other than $P$. Thus we obtain \eqref{eq:alpha_neq_zero}. The second option is that the domain of $\chi_\alpha$ does not contain a critical point of $\varphi$. In this case, we may proceed as above in this section, but we may integrate parts unrestricted amount of times to obtain
\[
 -h^{-4}\int e^{4i\psi/h} \chi_0 Q\abs{\p \Phi}^4\abs{a}^4 =O(h^\infty).
\]
Thus \eqref{eq:alpha_neq_zero} follows also in this case. Note that the presence of the boundary in any of the cases considered does not affect the result since we are assuming all the derivatives of $Q$ vanish on the boundary, so that implied integrations by parts can still be carried out in the same way. 
%
In the remaining of this section, we consider only the case $\alpha=0$ as the cases $\alpha\neq 0$ give lower order contributions by the reasoning above.

Next we analyze the term $H_1H_2$. We note that 
\[
 H_1H_2=f_{12}\p r_h L(r_h,\tilde r_h)+\tilde f_{12}\op \tilde r_h L(r_h,\tilde r_h)+\hat f_{12},
\]
where $\deg(f_{12})=\deg(\tilde f_{12})=3$ and $\deg(\hat{f}_{12})=N$, with $N$ large, since $\p a=0$ to high order at all critical points (including $P$). 
Thus by Lemma \ref{lem:polynomial_rh_estims} and stationary phase that
\[
 h^{-3}\int e^{4i\psi/h} \chi_0 Q_\gamma H_1H_2
 =o(h^{-1}).
\]

We note that the term $H_0H_2$ can be written as
\[
 H_0H_2=f_{02}\p r_h L(r_h,\tilde r_h)+\tilde f_{02}\op \tilde r_h L(r_h,\tilde r_h)+\hat f_{02}+\p r_h\op \tilde r_hL(r_h,\tilde r_h).
\]
Here $\deg(f_{02})=\deg(\tilde f_{02})=\deg(\hat f_{02})=N$, where $N$ is large since $\p a$ equals $0$ to high order at the critical points. 
We also have
\[
 h^{-1}\int e^{4i\psi/h} \chi_0 Q_\gamma \p r_h\op \tilde r_hL(r_h,\tilde r_h)=o(1)
\]
by the estimates \eqref{eq:rh_L2_estim}--\eqref{eq:rh_Lr_interpolation_estimate}. Combining this with Lemma \ref{lem:polynomial_rh_estims} shows that
\[
 h^{-1}\int e^{4i\psi/h} \chi_0 Q_\gamma H_0H_2=o(1).
\]

Similarly, the term $H_1^2$ can be written as 
\[
 H_1^2=f_{11}\p r_h L(r_h,\tilde r_h)+\tilde f_{11}\op \tilde r_h  L(r_h,\tilde r_h)+\hat f_{11}L(r_h,\tilde r_h)+\check f_{11}+\p r_h\op \tilde r_hL(r_h,\tilde r_h),
\]
where $\deg(f_{02})$, $\deg(\tilde f_{02})$, $\deg(\hat f_{02})$ and $\deg(\check f_{11})$ are large. By the same arguments as above, we obtain
\[
 h^{-2}\int e^{4i\psi/h} \chi_0 Q_\gamma H_1^2=o(h^{-1}).
\]
In fact, also $H_0H_1$ and $H_0^2$ can be written similarly as $H_0H_2$. Thus the term $H_0H_1$ can written as
\[
 h^{-1}\int e^{4i\psi/h} \chi_0 Q_\gamma H_0H_1=o(1), \quad \int e^{4i\psi/h} \chi_0 Q_\gamma H_0H_1=o(h).
\]
Combining everything so far, we have obtained
\[
 \int e^{4i\psi/h} Q_\gamma(\p v_1\overline \p v_2)^2=ch^{-1}\gamma^{-2}(x_0)Q(x_0)+o(h^{-1}).
\]

\subsubsection{The term  $\p v_1 \p v_2 \overline \p v_1 \overline \p v_2$.} \label{sec:subleading_term}
Let us then consider the term $\p v_1 \p v_2 \overline \p v_1 \overline \p v_2$. With the choices of $v_1$ and $v_2$, this is
\begin{align*}
 \p v_1&=\p (e^{\Phi/h}(a+r_h))=e^{\Phi/h}\left(\frac{\p \Phi}{h}(a+r_h)+\p(a+r_h)\right),  \\
 \op v_1&=\op (e^{\Phi/h}(a+r_h))=e^{\Phi/h}\op r_h, \\
 \p v_2&=\p (e^{-\overline\Phi/h}(\overline a+\tilde r_h))= e^{-\overline\Phi/h}\p \tilde r_h, \\
 \op v_2&=\op (e^{-\overline\Phi/h}(\overline a+\tilde r_h))=e^{-\overline\Phi/h}\left(-\frac{\overline\p \Phi}{h}(\overline a+\tilde r_h)+\p(\overline a+\tilde r_h)\right).
\end{align*}
Thus we have
%
%
\begin{multline*}
 e^{-4i\psi/h}\p v_1 \p v_2 \overline \p v_1 \overline \p v_2 =\op r_h\p \tilde r_h\left[-\frac{\abs{\overline\p \Phi}^2}{h^2}(a+ r_h)(\overline a+\tilde r_h)+\frac{\p \Phi}{h}(a+r_h)\p(\overline a+\tilde r_h)\right.\\
 \left.-\frac{\overline\p \Phi}{h}(\overline a+\tilde r_h)\op (a+r_h)+\p(\overline a+\tilde r_h)\op (a+r_h)\right]. 
\end{multline*}
Note the factor $\op r_h\p \tilde r_h$ in front. Consequently, by the estimates \eqref{eq:rh_L2_estim}--\eqref{eq:rh_Lr_interpolation_estimate} we have
\[
 \int Q_\gamma \p v_1 \p v_2 \overline \p v_1 \overline \p v_2=o(h^{-1}).
\]

\subsubsection{The term  $(\op v_1 \p v_2)^2$.} \label{sec:smallest_term}
This is the easiest term. We have
\begin{align*}
 \op v_1&=\op (e^{\Phi/h}(a+r_h))=e^{\Phi/h}\op r_h \\
 \p v_2&=\p (e^{-\overline\Phi/h}(\overline a+\tilde r_h))= e^{-\overline\Phi/h}\p \tilde r_h. 
\end{align*}
Thus by the $L^4$-estimate \eqref{eq:rh_Lr_estim},  we simply have
\[
 \left|\int Q_\gamma (\op v_1 \p v_2)^2\right| \leq C \int |\op r_h \p \tilde r_h|^2 \leq C\norm{\op r_h}_{L^4}^2\norm{\p \tilde r_h}_{L^4}^2=o(h).
\] 
%
Combining the results of the calculations in Sections \ref{sec:leading_term}--\ref{sec:smallest_term} proves Proposition \ref{prop:asymptotics_for_the_3rd_lin}.

\section{Proof of Theorem \ref{thm:main}}\label{sec:proof_of_main_thm} 
We prove Theorem \ref{thm:main}.  For this let us assume that $(\Sigma_1,g_1)$ and $(\Sigma_2,g_2)$ are $2$-dimensional Riemannian manifolds with mutual boundary $\p\Sigma$, that there are $1$-parameter families of Riemannian metrics $g_a(\ccdot,s)$ on $\Sigma_\beta$, that $0$ is a solution to minimal surface equation \eqref{eq:minimal_surface_general} on both $\Sigma_\beta$ and and that $\Lambda_{g_1}=\Lambda_{g_2}$. Here and below the index $\beta=1,2$ refers to quantities on the manifold $\Sigma_\beta$. 
The proof will be achieved by considering the first three linearizations of the minimal surface equation. 
\subsection{Step 1: First linearization}

By Lemma \ref{lem:high_ord_lin}, the first linearization of the minimal surface equation is  
\begin{equation}\label{eq:first_lin_proof}
	\begin{aligned}
		\begin{cases}
			(\Delta_{g_\beta}+h_{\beta}^{(1)}/2)v_\beta^{j}=0 
			& \text{ in } \Sigma_\beta,
			\\
			v_\beta^{j}=f_j
			&\text{ on }\p \Sigma.
		\end{cases}
	\end{aligned}
\end{equation}
 The index $j=1,\ldots,4$, refers to the boundary value $f_j\in C^\infty(\p \Sigma)$, which we assume to be the same for both $\beta=1,2$.  As remarked in Section \ref{Section 2}, the DN map of the minimal surface equation is smooth in Frech\'et sense. It thus follows from $\Lambda_{g_1}=\Lambda_{g_2}$, that the DN maps of the first linearizations \eqref{eq:first_lin_proof} also agree. 
 
 Since we assume that $\Sigma_1$  and $\Sigma_2$ are diffeomorphic to a bounded domain in $\R^2$ by boundary fixing maps, we may consider that $\Sigma_1$ and $\Sigma_2$ are a domain $\Omega$ in $\R^2$ while the DN maps of \eqref{eq:first_lin_proof} still agree for $\beta=1$ and $\beta=2$. The latter follows from uniqueness of solutions to corresponding Dirichlet problems. (Note that a diffeomorphism maps $\p \Sigma$ to $\p \Omega$.)  Consequently,  by \cite[Theorem 1.1]{imanuvilov2012partial} there is a diffeomorphic conformal mapping
\[
 F:\Sigma_1\to\Sigma_2, \quad F^*g_2=cg_1,
\]
which also satisfies $F|_{\p \Sigma}=\text{Id}$, and 
\[
 h_{1}^{(1)}=c F^*h_{2}^{(1)}
\]
Especially $F$ preserves Cauchy data of functions. The conformal factor $c$ satisfies $c|_{\p \Sigma}=1$. Here $F^*$ denotes the pullback by $F$. We mention that this is the only part of the proof where we use that $\Sigma_1$ and $\Sigma_2$ are diffeomorphic to $\Omega$ by boundary fixing maps.

We use $F$ to transform our analysis to $(\Sigma_1,g_1)$ as follows. We simplify our notation by setting
\[
 v^{j}:=v^{j}_1, \quad \widetilde v^{j}:=v_2^{j}\circ F
\]
and 
\[
  w^{jk}:=w^{jk}_1, \quad \widetilde w^{jk}_{2}=w_2^{jk}\circ F \ \text{ and } \ w^{jkl}:=w^{jkl}_1, \quad \widetilde w^{jkl}_{2}=w_2^{jkl}\circ F.
\]
and 
\[
 h^{(1)}:=h_{1}^{(1)} \quad \widetilde h^{(1)}:=h_{2}^{(1)}\circ F.
\]
We use similar notation for other quantities throughout out the proof. We also note that
 \[
  d\widetilde V_2:= F^*dV_2=c^{\dim(\Sigma_1)/2}dV_1=cdV_1.
 \]
We will also denote $dV:=dV_1$ and $\Sigma=\Sigma_1$.

Since $F$ is the identity on the boundary and
\begin{multline}\label{eq:morphism_of_sols}
 (\Delta_{g_1}+h_1^{(1)}/2)\widetilde v^{j}=\Delta_{c^{-1}\s F^*g_2}\widetilde v^{j}+(h_1^{(1)}/2)\widetilde v^{j}=c\s \Delta_{F^*g_2}v_2^{j}\circ F+(c F^*h_{2}^{(1)}/2)v_2^{j}\circ F \\
 =c\s F^*((\Delta_{g_2}+h_2^{(1)}/2)v_2)=0,
\end{multline}
we see that both $v^{j}_1$ and $v_2^{j}\circ F$ satisfy the same equation $(\Delta_{g_1}+h_1^{(1)}/2)v=0$ on $\Sigma_1$ and have the same boundary value $f_j$. Here we used that the Laplace-Beltrami operator in dimension $2$ is conformally invariant. By uniqueness of solutions, we thus have
\[
 v^{j}=\widetilde v^{j}.
\]

We record the following lemma:
\begin{lemma}\label{lem:F_id_infty}
 Let $(\Sigma_1,g_1)$ and $(\Sigma_2,g_2)$ be compact Riemannian surfaces with a mutual boundary $\p \Sigma$. Let also $L_\beta=\Delta_{q_\beta}+q_\beta$, $\beta=1,2$, be such that the corresponding DN maps of $L_1$ and $L_2$ are defined and satisfy $\Lambda_{L_1}=\Lambda_{L_2}$. Assume that $F:\Sigma_1\to \Sigma_2$ is a morphism of solutions in the sense that
\[
 F^*U_1=U_2, 
\]
for all $U_\beta$, $\beta=1,2$, that solve
\[
 L_\beta U_\beta=0, \quad U_\beta|_{\p \Sigma} =f
\]
for some $f\in C^\infty(\p \Sigma)$. Assume also that the formal Taylor series of $g_\beta$ and $q_\beta$ agree in $g_\beta$-boundary normal coordinates. 

Then the coordinate representation of $F$ in boundary normal coordinates is the identity mapping to infinite order on the boundary: $F(x^1,\ldots, x^n)=\text{Id} + O(x_n^\infty)$.
\end{lemma}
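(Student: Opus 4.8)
The plan is to derive everything from the morphism property, without ever using that $F$ is conformal; structurally this follows the proof of \cite[Theorem~3.3]{lassas2019conformal} with $L_\beta$-harmonic functions in place of conformal harmonic coordinates. \textbf{Step 1 (the boundary restriction).} Since $F$ is a diffeomorphism of surfaces with boundary it maps $\p\Sigma$ onto $\p\Sigma$. Applying the morphism property to the solutions $U_\beta^f$ of $L_\beta U=0$ with $U|_{\p\Sigma}=f$ gives $U_1^f=U_2^f\circ F$, and restricting to $\p\Sigma$ yields $f(x)=f(F(x))$ for every $f\in C^\infty(\p\Sigma)$ and every $x\in\p\Sigma$; hence $F|_{\p\Sigma}=\mathrm{Id}$. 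Now fix $g_1$-boundary normal coordinates $(x',x_n)$ near a point $p\in\p\Sigma$ on $\Sigma_1$ and $g_2$-boundary normal coordinates $(\tilde x',\tilde x_n)$ near $p$ on $\Sigma_2$, both inducing the same fixed coordinates on $\p\Sigma$, and let $\tilde x=\Psi(x)$ be the coordinate representation of $F$. Then $\Psi(x',0)=(x',0)$, and the assertion of the lemma is precisely that $\Psi(x)=x+O(x_n^\infty)$.

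\textbf{Step 2 (matched solutions that form coordinates).} I would choose $f_1,f_2\in C^\infty(\p\Sigma)$ so that the $L_2$-solutions $Y^i:=U_2^{f_i}$ form a coordinate system in a neighbourhood of $p$ in $\Sigma_2$: take $f_1$ a cutoff of a boundary coordinate function (so $dY^1(p)$ has nonvanishing tangential part), and $f_2$ supported away from $p$ with $\Lambda_{L_2}f_2(p)\neq 0$, which is possible because $\Lambda_{L_2}$ is nonlocal (if no such $f_2$ existed, an $L_2$-solution vanishing on a boundary arc together with its normal derivative would vanish identically by unique continuation), after possibly replacing $p$ by a nearby boundary point; then $dY^2(p)$ has nonvanishing normal part. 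By the morphism property $X^i:=Y^i\circ F=U_1^{f_i}$ are $L_1$-solutions on $\Sigma_1$, and since $F(p)=p$ they form a coordinate system near $p$ in $\Sigma_1$ as well. Their Cauchy data on $\p\Sigma$ equal those of the $Y^i$: the Dirichlet traces are $f_i$ in both cases, and the Neumann traces agree because $\Lambda_{L_1}=\Lambda_{L_2}$ and the $g_1$- and $g_2$-unit normals coincide on $\p\Sigma$.

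\textbf{Step 3 (comparison of normal expansions and conclusion).} The point is that, in boundary normal coordinates, where the metric has the form $dx_n^2+h_{ab}(x',x_n)\,dx^adx^b$, one has $\Delta_{g_\beta}u=-\p_{x_n}^2u+(\text{terms with at most one }\p_{x_n})$, so $L_\beta u=0$ determines $\p_{x_n}^2u|_{x_n=0}$ from the Cauchy data and the coefficients of $L_\beta$ at $x_n=0$; differentiating $L_\beta u=0$ repeatedly in $x_n$ then produces, recursively, all $\p_{x_n}^ku|_{x_n=0}$ in terms of the Cauchy data and the formal normal expansions of $g_\beta$ and $q_\beta$. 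By hypothesis these expansions agree for $\beta=1,2$, and by Step~2 the Cauchy data of $X^i$ and $Y^i$ agree; hence the coordinate representations $\mathcal X^i$ and $\mathcal Y^i$, viewed as smooth functions of $(x',x_n)$ near $\{x_n=0\}$, have the same formal Taylor series in $x_n$, so $\mathcal X^i(x)-\mathcal Y^i(x)=O(x_n^\infty)$. Since $\mathcal X^i(x)=\mathcal Y^i(\Psi(x))$ this gives
\[
 \mathcal Y^i(\Psi(x))=\mathcal Y^i(x)+O(x_n^\infty),\qquad i=1,2.
\]
As $(\mathcal Y^1,\mathcal Y^2)$ is a local diffeomorphism near $0$, applying its inverse yields $\Psi(x)=x+O(x_n^\infty)$, i.e.\ $F=\mathrm{Id}+O(x_n^\infty)$ in boundary normal coordinates near $p$; since $p\in\p\Sigma$ was arbitrary, the conclusion holds along all of $\p\Sigma$.

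The step demanding the most care is the bookkeeping underlying Steps~1 and~3: making precise that the two boundary normal coordinate systems may be taken to induce the same coordinates on $\p\Sigma$, and verifying that the normal-derivative recursion for $L_\beta$ really depends only on the Cauchy data and the formal normal expansions of $(g_\beta,q_\beta)$, so that it transfers verbatim between $\Sigma_1$ and $\Sigma_2$. The existence of the coordinate-forming solutions in Step~2 and the unique-continuation input for choosing $f_2$ are routine, and it is worth emphasising that conformality of $F$ is never used — only that $F$ carries $L_2$-solutions to $L_1$-solutions.
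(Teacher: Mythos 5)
Your argument is correct, and it takes a genuinely different route from the paper. The paper's proof works \emph{directly} with the derivatives of $F$: it combines $\Lambda_{L_1}=\Lambda_{L_2}$ with the identity $\nabla U_1 = DF^T\,\nabla (U_1\circ F^{-1})\circ F$, invokes Runge approximation to prescribe $\nabla U_1(p)$ freely, concludes $DF|_{x_n=0}=I$, and then inducts: it uses the split $\Delta_{g_\beta}+q_\beta = -\p_{x_n}^2+P_\beta$ to show $\p_{x_n}^2 U_1 = \p_{x_n}^2 U_2$ on $\{x_n=0\}$, extracts $\p_aU_1\,\p_{x_n}^2 F^a=0$ from the chain rule, again prescribes $\nabla U_1(p)$ to conclude $\p_{x_n}^2F=0$, and so on to higher orders. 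Your proof instead packages the same underlying recursion fact once and for all: you choose two special solutions whose Cauchy data pin down a coordinate system, note that $L_\beta u=0$ together with Cauchy data and the formal normal expansions of $(g_\beta,q_\beta)$ determines the entire formal Taylor expansion of $u$ in $x_n$, deduce that $X^i$ and $Y^i$ agree to infinite order, and then apply $\mathcal Y^{-1}$ to the relation $\mathcal Y\circ\Psi = \mathcal X = \mathcal Y + O(x_n^\infty)$ to read off $\Psi=\mathrm{Id}+O(x_n^\infty)$. This trades the paper's order-by-order induction on $F$ for a single invocation of the normal recursion plus an inversion, and it replaces the Runge-density argument by a concrete construction of two coordinate-forming solutions (cutoff of a boundary coordinate function, plus an $f_2$ with $\Lambda_{L_2}f_2(p)\neq 0$ obtained via unique continuation / nonlocality of the DN map). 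Both approaches rest on the same observation about the formal boundary recursion for $L_\beta$; yours is perhaps slightly more compact, while the paper's is more hands-on and makes the role of the Runge approximation explicit. Two small things worth tightening if you were to write this as a final proof: the unique-continuation reason why some $f_2$ with $\Lambda_{L_2}f_2(p)\neq 0$ exists should be spelled out via the self-adjointness of the DN map or an explicit Poisson-kernel argument rather than left at the gesture; and when you pass from $\mathcal Y\circ\Psi = \mathcal Y + O(x_n^\infty)$ to $\Psi = \mathrm{Id}+O(x_n^\infty)$ you should note explicitly that smooth local diffeomorphisms propagate infinite-order vanishing, which is true but is the step where the argument's rigor is carried.
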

We have placed the proof of the lemma in Appendix \ref{appx:proof_of_lemma}. We remark for possible future references that the lemma generalizes to higher dimensions and for more general second order elliptic operators.

Recall that $h^{(1)}=\p_u|_{u=0} \text{Tr}(g_u^{-1}\p_sg_u)$. Thus, by the assumptions of Theorem \ref{thm:main} that we are proving, the formal Taylor series of $h_1^{(1)}$ and $h_2^{(1)}$ agree in associated boundary normal coordinates. The same holds for $g_1$ and $g_2$. By \eqref{eq:morphism_of_sols}, the mapping $F$ is morphism of solutions. Consequently, by the above lemma 
\begin{equation}\label{eq:F_id_to_infinite}
F=\text{Id} + O(x_n^\infty) \text{ in boundary normal coordinates on } \{x_n=0\}.
\end{equation}
Especially we have $F_*\p_{\nu_1}=\p_{\nu_2}$ and $v_1$ and $v_2$ agree to infinite order in associated boundary normal coordinates.


%
%
%

\subsection{Step 2: Second linearization} Next we will first show by using the second linearization that
\[
 k_1^{(1)}=c\tilde k_2^{(1)},
\]
where
\[
		 \tilde k_2^{(1)}=F^*k_2^{(1)}.
		\]
Here $F$ is the conformal mapping of the last section. We note that $k_\beta^{(1)}$, $\beta =1,2$, are symmetric $2$-tensor fields on $\Sigma_\beta$. Then we will also show that $h_1^{(2)}=F^*h_2^{(2)}$ and $w_1^{(jk)}=F^*w_2^{(2)}$.

By Lemma \ref{Lem:Integral identity_2nd}, the associated integral identity   of the second linearization is
\begin{multline}\label{second_integral_id_proof}
	 \int_{\p \Sigma} f_m \s \p^2_{\eps_j \eps_k}\big|_{\epsilon=0} \Lambda_{g_\beta} (f_\epsilon) \, dS_\beta =\int_{\Sigma_\beta} v^m k_\beta^{(1)}(\nabla v_\beta^k,\nabla v_\beta^j\big)dV_\beta+\int_{\Sigma_\beta}v_\beta^k k_\beta^{(1)}(\nabla v_\beta^j,\nabla v_\beta^m\big)dV_\beta\\
 +\int_{\Sigma_\beta} v_\beta^j k_\beta^{(1)}(\nabla v_\beta^k,\nabla v_\beta^m\big)dV_\beta -\frac{1}{2}\int_{\Sigma_\beta} h_\beta^{(2)}v_\beta^jv_\beta^kv_\beta^mdV_\beta  \\
 - \int_{\p \Sigma} v_\beta^m k_\beta^{(1)}(\nu_\beta,\nabla v_\beta^{(j})v_\beta^{k)}dS_\beta,
 \end{multline}
 where $\beta=1,2$ refers to quantities on $\Sigma_\beta$ and $j,k,m\in \{1,\ldots,4\}$.    
 We change variables in the terms on the right-hand side of \eqref{second_integral_id_proof}  with $\beta = 2$ by using the conformal mapping $F$. 
		Using also $ v^{j}=v_2^{j}\circ F$,
		we have
		\begin{multline*}
		 \int_{\Sigma_2} v_2^m k_2^{(1)}(\nabla v_2^k,\nabla v_2^j\big)dV_2=\int_{\Sigma_1} F^*\left(v_2^m k_2^{(1)}(\nabla v_2^k,\nabla v_2^j)\right)F^*dV_2 \\
		 =\int_{\Sigma_1} c v^m \tilde k^{(1)}(\nabla v^k,\nabla v^j)dV.
		\end{multline*}
		Here the factor $c$ is due to the volume form. We also used the fact that $k_2^{(1)}$ is a tensor on $\Sigma_2$ so that $F^*$ acts on it by the usual coordinate transformation rules.  We have similarly for other terms in the \eqref{second_integral_id_proof}. Consequently, using the assumptions $k_1^{(1)}|_{\p \Sigma}=k_2^{(1)}|_{\p \Sigma}$ and $\Lambda_{g_1} (f_\epsilon) =\Lambda_{g_2} (f_\epsilon)$, the fact that $c|_{\p \Sigma}=1$,  and subtracting the identities \eqref{second_integral_id_proof} on $\Sigma=\Sigma_1$ and $\Sigma_2$ give 
\begin{multline}\label{eq:2nd_ord_recovery_proof}
 0=\int_{\Sigma} v^1 (k^{(1)}_1-c\tilde k^{(1)}_2)(\nabla v^2,\nabla v^3\big)dV+\int_{\Sigma}v^k (k^{(1)}_1-c\tilde k^{(1)}_2)(\nabla v^1,\nabla v^3\big)dV_1\\
 +\int_{\Sigma} v^3 (k^{(1)}_1-c\tilde k^{(1)}_2)(\nabla v^1,\nabla v^2\big)dV -\frac{1}{2}\int_{\Sigma} (h^{(2)}_1-c\tilde h^{(2)}_2)v^1v^2v^3dV.
\end{multline}
Here we also chose $(j,k,l)=(1,2,3)$. We note that since $F^*g_2=c g_1$ and $g_1=g_2$ and $F$ is the identity to infinite order on $\p \Sigma$ in associated boundary normal coordinates by \eqref{eq:F_id_to_infinite}, we have that 
\begin{align*}\label{eq:c_is_1_to_infinite}
 c&=1 \text{ to infinite order on } \p\Sigma \\
 k^{(1)}_1&=c\tilde k^{(1)}_2 \text{ to infinite order on } \p\Sigma.
\end{align*}

Let $z_0\in \Sigma$ be fixed. Let $a,b\in \R$ and let us choose the solutions as
 \begin{equation}\label{eq:sols_for_second_proof}
 \begin{split}
  v^1&=e^{\Theta_1/h}(a+r_h) \\
  v^2&=e^{\Theta_2/h}(a+r_h) \\
  v^3&=e^{\Theta_3/h}(\overline a+\tilde r_h),
\end{split}
\end{equation}
as in Section \ref{sec:CGOs}. 
Here in local holomorphic coordinates near $z_0=0$ 
\begin{align*}
\begin{split}
\Theta_1&=\frac{1}{2}z^2+O(z^3) \text{ is holomorphic }, \quad \Theta_2=\Theta_1, \\
\Theta_3&=-\z^2+O(\z^3) \text{ is antiholomorphic}.
\end{split}
\end{align*}
The amplitude $a$ is $1$ to high order at $z_0$ and $0$ to high order at other possible critical points of the phase functions $\Theta_j$. 
With these solutions, we note that on the right hand side of \eqref{eq:2nd_ord_recovery_proof} the first three terms are of size $O(1)$ by stationary phase \cite[Theorem 7.7.5]{hormander2015analysis} and by Proposition \ref{prop:asymptotics_for_the_2nd_lin}. The last term in \eqref{eq:2nd_ord_recovery_proof} is of size $O(h)$ by stationary phase and the $L^p$ estimates for $r_h$ and $\tilde r_h$, see \eqref{eq:rh_L2_estim}--\eqref{eq:rh_Lr_estim}. Therefore we may first focus on the three first terms. By Proposition \ref{prop:asymptotics_for_the_2nd_lin} it suffices to compute the asymptotics of 
\[
 \int_{\Sigma} \hat v^1 K(\nabla \hat  v^2,\nabla \hat v^3\big)dV+\int_{\Sigma}\hat v^2 K(\nabla \hat v^1,\nabla \hat v^3\big)dV+\int_{\Sigma}\hat v^3 K(\nabla\hat v^1,\nabla \hat v^2\big)dV,
\]
where  
\[
 K:=k^{(1)}_1-c\tilde k^{(1)}_2
\]
and where 
\[
\hat v^1=\hat v^2=e^{\Theta_2/h}a \ \text{ and } \ \hat v^3=e^{\Theta_3/h}\overline a 
\]
 are the leading order parts of the CGOs $v^j$. Let us denote 
\[
 \Theta =\Theta_1+\Theta_2+\Theta_3.
\]
Note that $\Theta$ is purely imaginary and has a nondegenerate critical point at $z_0$.

Since $K$ vanishes to high order on the boundary of $\Sigma$ and $a$ vanishes to high order at other possible critical points, it suffices to calculate the asymptotics of 
\begin{multline}\label{eq:final_2nd_lin_sum}
 0=\int_{U} \varphi e^{\Theta/h} K(\nabla \Theta_2,\nabla \Theta_3\big)dV+\int_{U} \varphi  e^{\Theta/h} K(\nabla \Theta_1,\nabla \Theta_3\big)dV \\
 +\int_{U} \varphi  e^{\Theta/h} K(\nabla \Theta_1,\nabla \Theta_2\big)dV,
\end{multline}
where $U$ is a holomorphic coordinate chart containing no other critical points than $z_0$.   (See \cite[Proof of Proposition 3.1]{carstea2022inverse} for justification of this statement.) 
The terms in \eqref{eq:final_2nd_lin_sum} correspond to the cases when all gradients hit the factors $e^{\Theta_j/h}$, yielding the large coefficient $h^{-1}$,  and none the amplitudes $a$ or $\overline a$. Here also $\varphi$ is compactly supported in $U$ and equals $1$ to high order at the critical point $z_0$.


For the following computations it is beneficial to write
\[
 \text{Hess}(z^2):=H=S+iA,
\]
where 
\[
 S=2
  \left[ {\begin{array}{cc}
   1 & 0 \\
   0 & -1 \\
  \end{array} } \right] \text{ and } A  =2\left[ {\begin{array}{cc}
   0 & 1 \\
   1 & 0 \\
  \end{array} } \right].
\]
Then 
\[
 \text{Hess}_{z=0}(\Theta)=(S+iA)-(S-iA)=2i A.
\]
We also write 
\begin{align*}
 H^{1}&=H^{2}=\text{Hess}_{z=0}(\Theta_1)=\text{Hess}_{z=0}(\Theta_2)=\frac{1}{2}H, \\
 H^{3}&=\text{Hess}_{z=0}(\Theta_3)=-\overline H.
\end{align*}
%
%
%
%
By H\"ormander \cite[Theorem 7.7.5]{hormander2015analysis}  we have 
\begin{multline*}
 \int_{U} \varphi e^{\Theta/h} K(\nabla \Theta_2,\nabla \Theta_3\big)dV=h^2C_\Theta (A^{-1})_{ab}\p_a\p_b(K_{cd}\p_c\Theta_2 \p_d\Theta_3)|_{z=0}+O(h^3) \\
 =h^2C_\Theta \text{Tr}(H^2A^{-1}H^3K)+O(h^3)
 \end{multline*}
(Looking first at \cite[Lemma 7.7.3]{hormander2015analysis} might be helpful in understanding the reason for the above identity.) 
Here the coefficient $C_\Theta\neq 0$ only depends on $\Theta$ and we use Einstein summation over repeated indices. We have similarly for the other three first terms of \eqref{eq:final_2nd_lin_sum}. Consequently, by taking the limit $h\to 0$ yields 
\[
 \text{Tr}(H^2A^{-1}H^3K)+\text{Tr}(H^1A^{-1}H^3K)+\text{Tr}(H^1A^{-1}H^2K)=0.
\]
Note that $A^{-1}=A/2$. The above is equivalent to
%
\begin{multline}\label{eq:simplfied_identity_final}
\text{Tr}(AHKH)-\text{Tr}(A\overline HK H)=0 \Leftrightarrow  \text{Tr}(A(H-\overline H)KH)=0 \\
\Leftrightarrow  \text{Tr}(A^2KH)=0 \Leftrightarrow  \text{Tr}(KH)=0 
\Leftrightarrow  \text{Tr}(KS)=0 \text{ and } \Leftrightarrow  \text{Tr}(KA)=0.
\end{multline}
Using the last conditions we thus have
\[
 K_{11}=K_{22} \text{ and } K_{12}=-K_{21}.
\]
Since $K$ is also symmetric, we have obtained
\[
 K(z_0)=K_{11}(z_0)I_{2\times 2}.    
\]
%
%

We show that $K_{11}(z_0)$ vanishes. For this, we use the fact that we are dealing with minimal surfaces, which have the property that the mean curvature
\[
 h=\tr(g^{-1}\p_sg)
\]
vanishes, see \eqref{eq_h_equiv_0}. By using that $k_2^{(2)}$ is a tensor, $F^*g_2=cg_1$ and trace of the $(1,1)\s $-tensor $g_2^{-1}k_2^{(1)}$ is invariant, at $z_0$ we have 
\begin{multline*}
 K_{11} \tr(g^{-1})=\tr(g^{-1}K)=\tr(g_1^{-1}K)=\tr(g^{-1}k_1^{(1)})-\tr(g^{-1}c\tilde k_2^{(1)}) \\ 
 =\tr(g^{-1}\p_sg_1)-\tr(c^{-1}(F^*g_2)cF^* k_2^{(1)}) = \tr(g_2^{-1}k_2^{(1)})|_F=0.
\end{multline*}
Thus $K(z_0)=0$. Repeating the argument for all points of $\Sigma$ shows that
\begin{equation}\label{eq_k_determined}
 k_1^{(1)}= c\tilde k_2^{(1)} \text{ on } \Sigma_1
\end{equation}
as claimed. 

Next we show that $h_1^{(2)}=c\tilde h_2^{(2)}$  on  $\Sigma_1$. 
By \eqref{eq_k_determined}, the identity \eqref{eq:2nd_ord_recovery_proof} now reads
\[
  0=\int_{\Sigma} (h^{(2)}_1-c\tilde h^{(2)}_2)v^1v^2v^3dV.
\]
Let $z_0\in \Sigma$. We choose $v_1$, $v_2$ and $v_3$ as before in \eqref{eq:sols_for_second_proof}. Thus
\[
 0=\int_{U} \varphi e^{\Theta/h} (h^{(2)}_1-c\tilde h^{(2)}_2)dV,
\]
where $\varphi$ is $1$ to high order at $z_0$. Arguing similarly as before, we obtain by stationary phase \cite[Theorem 7.7.5]{hormander2015analysis} that
 $h^{(2)}_1(z_0)=c(z_0)\tilde h^{(2)}_2(z_0)$. 
Repeating the argument for all points of $\Sigma$ yields
\[
 h^{(2)}_1= c\tilde h^{(2)}_2 \text{ on } \Sigma_1.
\]

Recall that the second linearizations $w_\beta^{jk}$, $\beta =1,2$,  
 satisfy 
 \begin{equation*}
 \begin{aligned}
		\begin{cases}
  (\Delta_{g_\beta}+h_\beta^{(1)}/2)w_\beta^{jk}+P_\beta^{(j}v_\beta^{k)}+k_\beta^{(1)}(\nabla v_\beta^{j},\nabla v_\beta^{k})+\frac{1}{2}h_\beta^{(2)}v_\beta^{j}v_\beta^{k}= 0& \text{ in } \Sigma \\
  w_\beta^{jk}=0
			&\text{ on }\p \Sigma,
  		\end{cases}
	\end{aligned}
 \end{equation*}
 where $P_\beta^j=-\nabla^{g_\beta}\cdot_\beta (v_\beta^jg_\beta k_\beta^{(1)}\nabla)$. Since we have recover all the coefficients of the above equation up to a conformal scaling factor, we see by uniqueness of solutions to elliptic equations that $w_1^{jk}=F^*w_2^{jk}=:\tilde w_2^{jk}$. All in all, we have shown by using the second linearization that
 \begin{align*}
  k_1^{(1)}=c\tilde k_2^{(1)} , \quad   h^{(2)}_1&=c\tilde h^{(2)}_2, \quad w_1^{jk}=\tilde w_2^{jk},
 \end{align*}
 for all $j,k=1,\ldots,4$.

\subsection{Step 3: Third linearization}
To complete the proof, we are left to show that the conformal factor $c=1$ on $\Sigma_1$. For this, we use the third linearization. By Lemma \ref{Lem:Integral identity_3rd}, the corresponding integral identity is 
	\begin{multline}\label{eq:third_integral_id_proof}
	 \int_{\p \Sigma} f_m \s \p^3_{\eps_j \eps_k\eps_l}\big|_{\epsilon=0} \Lambda (f_\epsilon) \, dS_g = \int_{\Sigma}g(\nabla v^{j},\nabla v^k)g(\nabla v^{l},\nabla v^m) dV  \\
	 +\int_{\Sigma}g(\nabla v^{j},\nabla v^l)g(\nabla v^{k},\nabla v^m) dV+\int_{\Sigma}g(\nabla v^{l},\nabla v^k)g(\nabla v^{j},\nabla v^m) dV \\
	 +H+R+B,
	\end{multline}
	where
	\begin{equation*}
 \begin{split}
 H&=-\int_{\Sigma}v^{(j}v^{k} k^{(2)}(\nabla v^{l)},\nabla v^m) dV +\int_{\Sigma}v^m g(\nabla (d^{-1}d^{(2)}v^{(j}v^k),\nabla v^{l)})dV \\
 &\qquad-\int_\Sigma v^m k^{(2)}(\nabla v^{(j},\nabla v^{k})v^{l)}dV-\frac{1}{2}\int_\Sigma v^mv^{j}v^{k}v^{l}h^{(3)}dV,\\
 R&=-\int_{\Sigma}w^{(jk}k^{(1)}(\nabla v^{l)},\nabla v^m)dV-\int_\Sigma k^{(1)}(\nabla v^m,\nabla v^{(j})w^{kl)}dV \\
 &\qquad-\int_{\Sigma}v^mk^{(1)}(\nabla v^{(j},\nabla w^{kl)})dV-\frac{1}{2}\int_\Sigma v^mg(\nabla v^{(j},\nabla v^{k})v^{l)}h^{(1)}dV  \\
 &\qquad \qquad -\frac{1}{2}\int_\Sigma v^mw^{(jk}v^{l)}h^{(2)}dV,   \\
 B&=\int_{\p \Sigma} v^mv^{(j}v^k g(\nu,k^{(2)}\nabla v^{l)})dS+\int_{\p \Sigma} v^mw^{(jk} g(\nu,k^{(1)}\nabla v^{l)})dS\\
&\qquad-\int_{\p \Sigma} v^mg(\nabla v^{(j},\nabla v^k)\p_\nu v^{l)}dS+\int_{\p \Sigma} v^mv^{(j} k^{(1)}(\nu,\nabla w^{kl)})dS.
\end{split}
\end{equation*}
The identity holds for any $j,k,m\in \{1,\ldots,4\}$ and for quantities on both $\Sigma=\Sigma_1$ and $\Sigma=\Sigma_2$. 

As already remarked after Lemma \ref{Lem:Integral identity_3rd} we will be able to disregard all the terms $H$, $R$ and $B$. Let us explain how. First of all, the terms $B$ are boundary terms, which we assume to be known. We have also shown that $v_1^{j}=\tilde v_2^{j}$ and $w_1^{jk}=\tilde w_2^{jk}$ on $\Sigma_1$. Especially, $v_1^{j}= v_2^{j}$ and $w_1^{jk}=w_2^{jk}$ to infinite order on the boundary by \eqref{eq:F_id_to_infinite}. Thus, when we subtract the integral identity \eqref{eq:third_integral_id_proof} on $\Sigma_1$ from that on $\Sigma_2$, the boundary terms $B$ will cancel. 

Next, we note that we have recovered all the quantities in $R$, up to a possible conformal factor. Let us see how the conformal factor behaves when we make change of variables in integration. Let us consider the simplest term
\[
 \int_{\Sigma_2} v_2^mw_2^{(jk}v_2^{l)}h_2^{(2)}dV_2
\]
in $R$ on $\Sigma_2$. This term equals
\[
 \int_{\Sigma_1} F^*(v_2^mw_2^{(jk}v_2^{l)}h_2^{(2)})F^*dV_2=\int_{\Sigma_1} v^mw^{(jk}v^{l)}c^{-1}h_1^{(2)}cdV_1=\int_{\Sigma_1} v^mw^{(jk}v^{l)}h_1^{(2)}dV_1.
\]
Thus the conformal factor $c$ cancels out when we make a change of variables. The other terms of $R$ behave the same way. Consequently, when we subtract the terms of $R$ on $\Sigma_1$ from those on $\Sigma_2$, they will cancel out.

Finally we note that while we have not recovered the terms of $H$, those terms contain at most $2$ derivatives on $v^j$ and do not  contain any $w^{jk}$. As before, we are going to use the CGOs of Section \ref{Section 2} as our solutions $v^j$. Consequently, by stationary phase these terms will be of size $O(1)$, while the first three terms on the right hand side of \eqref{eq:third_integral_id_proof} will be of size $O(h^{-1})$. We will provide more details to this statement below, after we have made specific choices for the CGOs $v^j$. All in all, we will be able to disregard all the terms $H$, $R$ and $B$.

Let us then consider the first three terms on the right hand side of \eqref{eq:third_integral_id_proof}. These will be the leading order terms when we fix the solutions $v^j$ as CGOs. By making a change of variable in integration, we have 
\begin{multline*}
 \int_{\Sigma_2}g_2(\nabla v_2^{j},\nabla v_2^k)g_2(\nabla v_2^{l},\nabla v_2^m) dV_2=\int_{\Sigma_1}F^*\left(g_2(\nabla v_2^{j},\nabla v_2^k)g_2(\nabla v_2^{l},\nabla v_2^m)\right) F^*dV_2   \\
 =\int_{\Sigma_1}F^*g_2(\nabla v^{j},\nabla v^k)F^*g_2(\nabla v^{l},\nabla v^m)cdV_1=\int_{\Sigma_1}c^{-1}g_1(\nabla v^{j},\nabla v^k)g_1(\nabla v^{l},\nabla v^m)dV_1.
\end{multline*}
Here we used that $F^*g_2=cg_1$ (or $F^*g_2=c^{-1}g_1$ when $F^*g_2$ is acting on $1$-forms as above) and that $v^j=F^*v_2^j$. Thus, by what we have argued, subtracting the identity \eqref{eq:third_integral_id_proof} on the $\Sigma_1$ from that on $\Sigma_2$ and using $\Lambda_{g_1} (f_\epsilon) =\Lambda_{g_2} (f_\epsilon)$ we have
\begin{multline}\label{eq:withH1H2}
H_1-H_2=\int_{\Sigma_1}(1-c^{-1}) \Big[g_1(\nabla v^1\cdot \nabla v^2)g_1(\nabla v^3\cdot \nabla  v^4)+g_1(\nabla v^1\cdot \nabla v^3)g_1(\nabla v^2\cdot \nabla  v^4) \\
 +g_1(\nabla v^1\cdot \nabla v^4)g_1(\nabla v^2\cdot \nabla  v^3)\Big]dV_1.
 \end{multline}
 Here we also chose $(j,k,l,m)=(1,2,3,4)$. 

Let $z_0\in \Sigma$. We choose the four solutions as
\begin{align*}
\begin{split}
 v_1&=v_3=e^{\Phi/h}(a+r_h) \\
 v_2&=v_4=e^{-\overline \Phi/h}(\overline a+\tilde r_h) ,
 \end{split}
\end{align*}
where $\Phi$ is a holomorphic Morse function with a critical point at $z_0$. These solutions are as in \eqref{eq:sols_for_4th}. 
The holomorphic function $a$ such that it has the expansion $a(z)=1+O(z^N)$ near the critical point $z_0$ and that $a$ vanishes to high order at all other possible critical points of $\Phi$. As $g_1=g_2$ to infinite order on $\p \Sigma$ and $F^*g_2=g_1$, we have that $c^{-1}$ is $1$ to infinite order on the boundary. Consequently, by Proposition \ref{prop:asymptotics_for_the_3rd_lin}, we have
\begin{multline}\label{eq:expansion_for_3rd_lin_proof}
\int_{\Sigma}(1-c^{-1})\Big[g(\nabla v_1\cdot \nabla v_2)g(\nabla v_3\cdot \nabla  v_4)+g(\nabla v_1\cdot \nabla v_3)g(\nabla v_2\cdot \nabla  v_4) \\
 +g(\nabla v_2\cdot \nabla v_3)g(\nabla v_1\cdot \nabla  v_4)\Big]dV=h^{-1}(1-c^{-1}(z_0))+o(h^{-1}).
 \end{multline}

Let us complete the proof by arguing that the terms $H_1$ and $H_2$ are negligible. Indeed, as they contain only $v^j$ and up to $2$ derivatives of them, and  everything else in these terms is independent of $h$, these terms are of size $O(1)$ by \cite[Theorem 7.7.5]{hormander2015analysis} and the $L^p$ estimates \eqref{eq:rh_L2_estim}--\eqref{eq:rh_Lr_estim} for $r_h$ and $\tilde r_h$ and their gradients. Combining this fact with \eqref{eq:withH1H2} and \eqref{eq:expansion_for_3rd_lin_proof} shows that
\[
 h^{-1}(1-c^{-1}(z_0))+o(h^{-1})=0.
\]
Thus $c(z_0)=1$. Repeating this argument for all $z_0\in \Sigma_1$ shows that
\[
 F^*g_2=g_1 \text{ on } \Sigma_1.
\]
Finally noting that the scalar second fundamental form reads $\eta_{ab}(x)=\p_s|_{s=0}g_{ab}(x,s)$ in Fermi-coordinates (see e.g. \cite{lassas2016calder} for the formula), knowing $k^{(1)}$ and $g$ determines $\eta$. Thus also $F^*\eta_2=\eta_1$. This concludes the proof of Theorem \ref{thm:main}.

\subsection{Proof of Theorem \ref{thm:main2}}
We note that in Steps 2 and 3 of the proof of Theorem \ref{thm:main}, we did not use that $\Sigma_1$ and $\Sigma_2$ are a domain in $\R^2$ as sets. Those steps hold for general Riemannian surfaces.  Using this observation, we prove our other main theorem.
\begin{proof}[Proof of Theorem \ref{thm:main2}]
 By assumption $(\Sigma_1,g_1)$ and $(\Sigma_2,g_2)$ are general Riemannian conformal Riemannian surfaces: $g_2=cg_1$. Since $g_2=g_1$ to infinite order on the boundary by assumption, we have that $c=1$ to infinite order on the boundary. 
 
 By Lemma \ref{lem:high_ord_lin}, the first linearization of the minimal surface equation is  
\begin{equation}\label{eq:first_lin_proof_2}
	\begin{aligned}
		\begin{cases}
			(\Delta_{g_\beta}+h_{\beta}^{(1)}/2)v_\beta^{j}=0 
			& \text{ in } \Sigma_\beta,
			\\
			v_\beta^{j}=f_j
			&\text{ on }\p \Sigma,
		\end{cases}
	\end{aligned}
\end{equation}
$\beta=1,2$ and $j=1,\ldots 4$.
Using the $\Delta_{g_2}$ is conformally invariant, we have that the DN maps of the equations
\begin{align*}
 (\Delta_{g_1}+ch_{2}^{(1)}/2)v_2=0 \text{ and }  (\Delta_{g_1}+h_{1}^{(1)}/2)v_1=0
\end{align*}
agree on $\Sigma_1$. By \cite{guillarmou2011calderon} (or \cite{guillarmou2011identification}) we have 
\[
h_{1}^{(1)}=ch_{2}^{(1)} \text{ on } \Sigma_1.
\]
Consequently, by uniqueness of solutions to the Dirichlet problem \eqref{eq:first_lin_proof_2}, we also have
\[
 v_1^{j}=v_2^{j} \text{ on } \Sigma_1.
\]
By setting $F\equiv \text{Id}$, we have obtained the conclusions of Step 1 of the proof of Theorem \ref{thm:main}. Since Steps 2 and 3 hold without the assumption that $\Sigma_1$ and $\Sigma_2$ are a domain in $\R^2$ as sets, the rest of the proof is identical to that of Theorem \ref{thm:main}.  Thus $c=1$ on $\Sigma_1$.

\end{proof}

\begin{remark}\label{rem:generalization_of_earlier}
 Assume that $(\Sigma,g)$ is a general $2$-dimensional minimal surface satisfying
 \begin{equation}\label{eq:q_condition}
 \frac{d}{d s}\Big|_{s=0}\text{Tr}(g_s^{-1}\p_sg_s)=0
 \end{equation}
 in corresponding Fermi coordinates. Then, the first linearization \eqref{linear_eq} is just $\Delta_gv=0$. Consequently, we may recover $(\Sigma,g)$ up to a conformal mapping in Step $1$ of the proof of Theorem \ref{thm:main}. This follows for example from \cite[Theorem 5.1]{lassas2018poisson}. Repeating then Steps $2$ and $3$  recover $(\Sigma, g)$ (up to an isometry). This means that if \eqref{eq:q_condition} holds,  we may remove the topological assumption about $\Sigma$ in Theorem \ref{thm:main}. Especially, if $N=\Sigma\times\R$ and $\bar g = ds^2+g$, in which case obviously \eqref{eq:q_condition} is true, we may recover $(\Sigma,g)$. Thus, Theorem \ref{thm:main} (or its proof) generalizes the main result of \cite{carstea2022inverse}.
 \end{remark}

\appendix
\section{Proof of lemmas}
\begin{lemma}\label{lem:log_lemma}
 Assume that $\deg(f)=0$ and that $\psi$ is Morse. For all $\eps>0$ small enough
 \[
  \op_\psi^{-1}f=O_{L^2}(h^{\frac{1}{2}+\epsilon}) \text{ and } \p_\psi^{-1}f=O_{L^2}(h^{\frac{1}{2}+\epsilon}).
 \]
\end{lemma}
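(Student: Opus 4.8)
The plan is to reduce the claim to a direct computation with the explicit kernel of $\op^{-1}$ (equivalently of $\p^{-1}$) near the Morse critical point $z_0$ of $\psi$. First I would use a partition of unity to split $f = \chi f + (1-\chi)f$, where $\chi$ is supported in a small holomorphic coordinate patch around $z_0$ and equals $1$ near $z_0$. On the support of $(1-\chi)f$ the phase $\psi$ has no critical point, so $\op\psi$ (in local coordinates, $c\bar z + O(|z|^2)$ with $c\neq 0$ at $z_0$, but here nonvanishing) is bounded away from zero; a single integration by parts in the definition $\op_\psi^{-1}g = \mathcal R\,\op^{-1}(e^{-2i\psi/h}\mathcal E g)$, writing $e^{-2i\psi/h} = \frac{ih}{2}\,\op^*\!\big(\tfrac{e^{-2i\psi/h}}{\op\psi}\big)$ and moving the $\op^*$ onto the other factors, gains a factor $h$ with $L^2$-bounded remainder, which is much better than $h^{1/2+\epsilon}$. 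So the whole contribution of $(1-\chi)f$ is $O_{L^2}(h)$.

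The main work is the localized piece $\chi f$ with $\deg(f)=0$, i.e.\ $f = C\,dz + O(|z|)$ near $z_0$. Here I would work in the local coordinate and write $\op_\psi^{-1}(\chi f) = \mathcal R\,\op^{-1}(e^{-2i\psi/h}\chi f)$, where in coordinates $\op^{-1}$ is the solid Cauchy transform $u(z) = \frac{1}{\pi}\int \frac{g(\zeta)}{z-\zeta}\,dA(\zeta)$. Since $\psi$ is Morse at $z_0$, after a linear change of coordinates $\psi(z) = \psi(z_0) + \Re(\lambda z^2) + O(|z|^3)$ with $\lambda\neq 0$, or more simply one keeps the exact phase and uses that $|\psi(z)-\psi(z_0)|\gtrsim |z|^2$ on the patch. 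The $L^2$ bound I want then follows from the standard oscillatory-integral/stationary-phase estimate for the operator $g\mapsto \int \frac{e^{-2i(\psi(\zeta)-\psi(z))/h}}{z-\zeta} g(\zeta)\,dA(\zeta)$: the Schur-type test using $\int \frac{dA(\zeta)}{|z-\zeta|}\,\big|\text{(oscillation localized to }|\zeta|\lesssim h^{1/2})\big| \lesssim h^{1/2}|\log h|$ gives an $L^2\to L^2$ norm of order $h^{1/2}|\log h|$, whence $\op_\psi^{-1}(\chi f) = O_{L^2}(h^{1/2}|\log h|) = O_{L^2}(h^{1/2+\epsilon})$ for every $\epsilon>0$ (absorbing the logarithm). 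Concretely this is exactly the estimate already used in \cite{guillarmou2011identification} (cf.\ the proof of \eqref{eq:sobo_decayL2} and \eqref{eq:rh_L2_estim}) applied to a zero-degree rather than positive-degree section — the degree-$0$ section is the borderline case where one loses a logarithm compared to Lemma \ref{lem: iuy modified}. I would either cite \cite[Lemma 2.2, Lemma 2.3]{guillarmou2011identification} and \cite[Prop.~2.7]{imanuvilov2010calderon} for this borderline bound, or reproduce the short polar-coordinates computation: split the $\zeta$-integral into $|\zeta-z|\le h^{1/2}$ and $|\zeta-z|\ge h^{1/2}$, bound the first by $\int_{|\zeta-z|\le h^{1/2}}\frac{dA}{|z-\zeta|}\lesssim h^{1/2}$, and integrate by parts once in the second to gain $h/|\psi'|$ and produce the $|\log h|$.

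The identical argument applies verbatim to $\p_\psi^{-1}$, replacing $\op$, $\op^{-1}$, $\op^*$ by $\p$, $\p^{-1}$, $\p^*$ and conjugating the phase; I would simply remark this at the end rather than repeat it. The one point requiring a little care — and the main obstacle — is making the logarithmic loss rigorous in the borderline degree-$0$ case uniformly in $h$: one must check that the cutoff scale $h^{1/2}$ is the right one and that the remainder after integration by parts is genuinely $O_{L^2}$ (not merely $O_{L^\infty}$ on a shrinking set), which is where the Morse nondegeneracy $|\nabla\psi|\gtrsim |z-z_0|$ enters. Once that estimate is in hand, writing $h^{1/2}|\log h| = o(h^{1/2+\epsilon})^{-1}\cdot\ldots$, i.e.\ $h^{1/2}|\log h|\le C_\epsilon h^{1/2+\epsilon}$ for small $\epsilon$, finishes the proof.
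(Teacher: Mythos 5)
Your concluding step is false: $h^{1/2}|\log h|$ is \emph{not} $O(h^{1/2+\epsilon})$. Since $|\log h|\to\infty$ and $h^{\epsilon}\to 0$ as $h\to 0$, the inequality $h^{1/2}|\log h|\le C_\epsilon h^{1/2+\epsilon}$ cannot hold for any $\epsilon>0$; the log factor can only be absorbed to the \emph{left}, i.e.\ $h^{1/2}|\log h|\le C_\epsilon h^{1/2-\epsilon}$. So the Schur/polar-coordinates estimate you propose to use near the critical point would at best yield $O_{L^2}(h^{1/2}|\log h|)$ or $O_{L^2}(h^{1/2}|\log h|^{1/2})$, which is strictly weaker than the lemma's $O_{L^2}(h^{1/2+\epsilon})$. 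The missing ingredient is precisely how to beat the factor $h^{1/2}$ rather than merely approach it with a log loss.

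The paper handles this by a different decomposition: instead of a \emph{fixed} spatial cutoff $\chi$ around $z_0$, it uses an \emph{$h$-dependent} cutoff $\chi_h(z)=\chi(z/h^{1/2})$ that separates the ball $|z|\lesssim h^{1/2}$ from its complement. On the small ball, one does not integrate by parts at all; one observes that $\mathrm{Vol}(B(0,h^{1/2}))^{1/q}=h^{1/q}$ and uses the boundedness of $\op^{-1}:L^q\to W^{1,q}$ followed by Sobolev embedding $W^{1,q}\hookrightarrow L^{q^*}\hookrightarrow L^2$ for $1<q<2$. Since $1/q\in(1/2,1)$, this inner piece contributes $O_{L^2}(h^{1/q})=O_{L^2}(h^{1/2+\epsilon})$ with $\epsilon=1/q-1/2>0$. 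On the outer annulus one integrates by parts once, gaining a full factor $h$ (not $h^{1/2}$), so the residual logs appear as $h|\log h|^{1/2}$ and $h\cdot h^{1/q-1}$, both harmless. Your spatial decomposition, by contrast, puts the entire sensitive region (a fixed neighborhood of $z_0$) into one term, and at that point the $L^2$ estimate genuinely has a log loss at scale $h^{1/2}$; you cannot recover $h^{1/2+\epsilon}$ from there. The fix is the $h^{1/2}$-scale cutoff together with the $L^q$-based argument for the inner ball, which is the mechanism that produces the strict improvement over $h^{1/2}$.
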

\begin{proof}
By changing variables, we may assume that $f$ is supported in a ball of radius $1$. Let $\chi$ be a cutoff function, which is identically $0$ on a ball of radius $1$ and $1$ outside the ball of radius $2$. Then
\[
 \chi_h:=\chi(z/h^{1/2})
\]
is a cutoff function, which is $0$ on a ball of radius $h^{1/2}$ and $1$ outside a ball of radius $2h^{1/2}$.  We have
\begin{align}
 \op_\psi^{-1}f=\op^{-1} (e^{-2i\psi/h} f)=\op^{-1} (e^{-2i\psi/h} \chi_h f)+\op^{-1} (e^{-2i\psi/h} (1-\chi_h)f).
\end{align}
 Let $1<q<2$.  We have
\begin{multline*}
 \norm{\op^{-1} (e^{-2i\psi/h} (1-\chi_h)f)}_{L^2}\lesssim  \norm{\op^{-1} (e^{-2i\psi/h} (1-\chi_h)f)}_{L^{q^*}}\lesssim  \norm{\op^{-1} (e^{-2i\psi/h} (1-\chi_h)f)}_{W^{1,q}}\\[5pt]
\lesssim \norm{(1-\chi_h)f}_{L^q}\lesssim \norm{f}_{L^\infty}\text{Vol}(B(0,h^{1/2}))^{1/q}\lesssim h^{1/q} \norm{f}_{L^\infty}.
\end{multline*}
In the above, the first inequality follows by H\"older's inequality (since the Sobolev conjugate exponent $q^*=2q/(2-q)>2$), the  second is Sobolev's inequality, and the third is the boundedness of $\op^{-1}$ from $L^q$ to $W^{1,q}$ (see, e.g. \cite{guillarmou2011identification}).

Then we integrate by parts to have
\begin{multline*}
 \op^{-1} (e^{-2i\psi/h} \chi_h f)=\int e^{-2i\psi/h}\chi_h f(z-w)^{-1}dz=-\frac{i}{2}h\int e^{-2i\psi/h}\op\left(\chi_h\frac{f}{\op \psi}(z-w)^{-1}\right) \\
 =-\frac{i}{2}h e^{i\psi/h} \chi_h\frac{f}{\op \psi}-\frac{i}{2}h\int e^{i\psi/h}\op \left(\chi_h\frac{f}{\op \psi}\right)(z-w)^{-1}.
\end{multline*}
Using the boundedness of $\op^{-1}$ again we have
\begin{multline}\label{eq86}
 \norm{\op^{-1} (e^{-2i\psi/h} \chi_h f )}_{L^2}\lesssim h\norm{\chi_h\frac{f}{\op \psi}}_{L^2}+h\norm{\op \left(\chi_h\frac{f}{\op \psi}\right)}_{L^q}\\
 \lesssim h\norm{\chi_h\frac{f}{\op \psi}}_{L^2}+h\norm{\left(\op \chi_h\right)\frac{f}{\op \psi}}_{L^q} +h\norm{\chi_h\frac{\op f}{\op \psi}}_{L^q}+h\norm{\chi_h\frac{f}{(\op \psi)^2}}_{L^q}.
\end{multline}
The first term on the right of \eqref{eq86} is bounded as
\[
 \norm{\chi_h\frac{f}{\op \psi}}_{L^2}^2\leq \norm{f}_{L^\infty}^2\norm{\frac{\chi_h}{\op\psi}}_{L^2}^2\lesssim \norm{f}_{L^\infty}^2\int_{1\geq|z|\geq h^{1/2}}|z|^{-2}\lesssim \int_{h^{1/2}}^1r^{-1} dr=\frac{1}{2}|\log h|.
\]
Here we used that $\psi$ is Morse to have $\abs{(\op \psi)^{-1}}\lesssim \abs{z}^{-1}$.

The second term on the right of \eqref{eq86} is bounded as
\begin{multline*}
 \norm{(\op\chi_h)\frac{f}{\op \psi}}_{L^q}^q=\int_{B(0,1)} \abs{\op\chi_h}^q\abs{\frac{f}{\op \psi}}^q\lesssim \norm{(\op \chi_h)}_{L^\infty}^q\int_{h^{1/2}}^{2h^{1/2}} r\left|\frac{f}{r}\right|^q dr \\
 \lesssim h^{-q/2} \int_{h^{1/2}}^{2h^{1/2}} r^{1-q}dr \lesssim h^{-q/2}h^{\frac{1}{2}(2-q)}=h^{1-q}.
\end{multline*}
Here we additionally used  
\[
 \norm{\op \chi_h}_{L^\infty}\leq h^{-1/2}\norm{\op \chi}_{L^\infty}\lesssim h^{-1/2}.
\]

%

Since $\deg(f)=0$, we have $f(z)=z^k\overline z^l+O(\abs{z})$ with $k+l=0$. Thus
\[
 \op f= C\s z^k\overline z^{l-1} + O(1) \text{ and } \abs{\op f}\lesssim \abs{z}^{-1}.
\]
Using this, the third term on the right of \eqref{eq86} is bounded as 
\begin{equation}\label{eq:log_estimate}
 \norm{\chi_h\frac{\op f}{\op \psi}}_{L^q}^q
\lesssim \int_{\{1\geq\abs{z}\geq h^{1/2})\}} \abs{z}^{-2q}
 =  \int_{h^{1/2}}^1  r^{1-2q}dr\lesssim h^{1-q}.
\end{equation}
The fourth term on the right of \eqref{eq86} is bounded as
\begin{equation*}
 \norm{\chi_h\frac{f}{(\op \psi)^2}}_{L^q}^q\lesssim \norm{f}_{L^\infty}^q\int_{\{1\geq\abs{z}\geq h^{1/2})\}} \abs{z}^{-2q} \lesssim h^{1-q}.
\end{equation*}

Combining everything we have
\[
 \norm{\op_\psi^{-1}(f)}_{L^2}\lesssim h^{1/q}+h(|\log h|^{1/2}+h^{\frac{1}{q}-1}).
\]
Since $1<q<2$, the result follows.
\end{proof}

\section{Calculations}\label{appx:calculations}
We record computations used in Section \ref{Section 2}. For the below calculations, we note that
\[
 \frac{d^2}{d\eps_j\eps_k}\Big|_{\eps=0}(1+\abs{\nabla u}^2_u)^{-1/2}=-g(\nabla v_j,\nabla v_k).
\]

The calculation behind the equation \eqref{eq:third_deriv_of_f} is the following
\begin{multline*}
 \frac{d^3}{d\eps_{j}d\eps_kd\eps_l}\Big|_{\eps=0}f(u,\nabla u) \\
 =\frac{1}{2}\frac{1}{(1+\abs{\nabla u}^2_{u})^{1/2}}|_{\eps=0}\left(\p_{\eps_{jkl}}|_{\eps=0}k_u^{(1)}(\nabla u,\nabla u)\right)+\frac{1}{2}\p_{\eps_{jkl}}|_{\eps=0}\left((1+\abs{\nabla u}^2_{u})^{1/2}h_u\right) \\
 =\frac{1}{2}\p_{\eps_{jk}}|_{\eps=0}\left[\left(k_u^{(2)}(\nabla u,\nabla u)u_l+2k_u^{(1)}(\nabla u_l,\nabla u)\right)\right] \\
 +\frac{1}{2}\p_{\eps_{jk}}|_{\eps=0}\left[(1+\abs{\nabla u}^2_{u})^{-1/2}g_u(\nabla u_l,\nabla u)h_u+(1+\abs{\nabla u}^2_{u})^{1/2}h_u^{(1)}u_l \right] \\
 =\frac{1}{2}\p_{\eps_{j}}|_{\eps=0}\Big[2k_u^{(2)}(\nabla u_k,\nabla u)u_l+2k_u^{(2)}(\nabla u_l,\nabla u)u_k+2k_u^{(1)}(\nabla u_{kl},\nabla u)+2k_u^{(1)}(\nabla u_{l},\nabla u_k)\Big] \\
 +\frac{1}{2}\p_{\eps_{j}}|_{\eps=0}\Big[g_u(\nabla u_l,\nabla u_k)h_u+g_u(\nabla u_l,\nabla u)h_u^{(1)}u_k \\ 
 \qquad\qquad\qquad\qquad\qquad\qquad\qquad\qquad\qquad\qquad +g_u(\nabla u_k,\nabla u)h_u^{(1)}u_l+h_u^{(2)}u_lu_k+h_u^{(1)}u_{kl} \Big] \\
 =k^{(2)}(\nabla v^{(j},\nabla v^k)v^{l)}+k^{(1)}(\nabla w^{(jk},\nabla v^{l)})+\frac{1}{2}g(\nabla v^{(j},\nabla v^k)v^{l)}h^{(1)} \\
 +\frac{1}{2}w^{(jk}v^{l)}h^{(2)}+\frac{1}{2}v^{j}v^{k}v^{l}h^{(3)}+\frac{1}{2}w^{jkl}h^{(1)}.
\end{multline*}
In the above calculation $u|_{\eps=0}\equiv 0$ and $h=0$ was used several times.

The calculation behind the formula for \eqref{eq:formula_for_Pjk} is the following
\begin{multline*}
 P^{jk}
 =-\frac{d^2}{d\eps_jd\eps_k}\Big|_{\eps=0}d_u^{-1}\nabla\cdot k_ud_u(1+\abs{\nabla u}^2_u)^{-1/2}\nabla  \\
 =\frac{d}{d\eps_l}\Big|_{\eps=0}\Big[d^{-2}_u d^{(1)}_u u_k\nabla\cdot k_ud_u (1+\abs{\nabla u}^2_u)^{-1/2} \nabla-d^{-1}_u\nabla\cdot k_u^{(1)}u_kd_u (1+\abs{\nabla u}^2_u)^{-1/2} \nabla\\
 -d^{-1}_u\nabla\cdot k_u d_u^{(1)}u_k (1+\abs{\nabla u}^2_u)^{-1/2} \nabla -d^{-1}_u\nabla\cdot k_u d_u \frac{d}{d\eps_k}(1+\abs{\nabla u}^2_u)^{-1/2} \nabla\Big]  \\
 =d^{-2} d^{(2)} v^k v^l\nabla\cdot kd \nabla-d^{-1}\nabla\cdot k^{(2)}v^kv^ld \nabla-d^{-1}\nabla\cdot k^{(1)}w^{kl}d \nabla-d^{-1}\nabla\cdot k d^{(2)}v^kv^l
 \nabla  \\
 +d^{-1}\nabla\cdot kd g(\nabla v^k,\nabla v^l)\nabla \\
 =-k\nabla (d^{-1}d^{(2)}v^k v^l)\cdot \nabla-d^{-1}\nabla\cdot (k^{(2)}v^kv^ld \nabla) \\
 -d^{-1}\nabla\cdot (k^{(1)}w^{kl}d \nabla)
 +d^{-1}\nabla\cdot (kd g(\nabla v^k,\nabla v^l)\nabla).
\end{multline*}

\section{Boundary determination for the isometry}\label{appx:proof_of_lemma} 
\begin{proof}[Proof of Lemma \ref{lem:F_id_infty}]
The proof is similar to that of \cite[Lemma 3.4]{lassas2016calder}. 
 Let $p\in \p M$, let us fix coordinates on a neighborhood $\Gamma$ of $p$ in $\p \Sigma$ and let us consider the associated boundary normal coordinates on $\Sigma_1$ and $\Sigma_2$.  
Let $U_\beta$, $\beta=1,2$, be solutions to $L_\beta U_\beta=0$ with $U_\beta|_{\p \Sigma}=f$.
 Below we denote by $F$, $U_1$ and $U_2$ their coordinate representation in the boundary normal coordinates. 
 
 Since $F$ the preserves boundary data of solutions $U_1$ and $U_2$ by assumption, we have
 \[
  F|_{\p M}=\text{Id}_{\p M}.
 \]
 Since $\Lambda_{L_1}=\Lambda_{L_2}$, we have
 \[
  \p_{x_n}U_1=\p_{x_n}U_2 \text{ on } \{x_n=0\}.
 \]
 It follows that $\nabla U_1=\nabla U_2$ on $\{x_n=0\}$. Here $\nabla$ is the gradient in $\R^n$. Combining this with the assumptions that $F$ is a morphism of solutions, we then have
 \[
  \nabla U_1=\nabla U_2=\nabla(U_1\circ F)=DF^T\nabla U_1|_F=DF^T\nabla U_1 \text{ on } \{x_n=0\}
 \]
 for all solutions $U_1$ to $L_1U_1=0$ in $\Sigma_1$.
 By Runge approximation (see e.g. \cite[Proposition B.1]{lassas2016calder}) and by using linearity of $L_1$, it is possible to choose $\nabla U_1(p)$ freely. Thus 
 \[
  DF|_{x_n=0}(p)=I_{n\times n}.
 \]
 Repeating the argument for all $p\in \Gamma$ shows that $DF|_{x_n=0}(p)=I_{n\times n}$ on $\{x_n=0\}$.
 
 Following the proof of \cite[Lemma 3.4]{lassas2016calder}, we may write
 \[
  \Delta_{g_\beta}+q_\beta=-\p_{x_n}^2+P_\beta,
 \]
 where $P_\beta$ is a partial differential operator containing only first order derivatives in $x_n$ and $x'$ derivatives up to second order and whose coefficients are polynomial expressions of $g_\beta$ and $q_\beta$. Since
 \[
  0=(\Delta_{g_\beta}+q_\beta)U_\beta=-\p_{x_n}^2U_\beta+P_\beta U_\beta,
 \]
 we have by the condition $\nabla U_1=\nabla U_2$ on $\{x_n=0\}$ and the assumption that the formal Taylor series of $g_\beta$ and $q_\beta$ agree to infinite order at $\{x_n=0\}$ in boundary normal coordinates that
 \begin{equation}\label{eq:op_decomp}
  \p_{x_n}^2U_1=P_1 U_1=P_2 U_2=\p_{x_n}^2U_2.
 \end{equation}
 Consequently, we have $\nabla^2 U_1=\nabla^2 U_2$ on $\{x_n=0\}$. 
Then, by using also $DF|_{x_n=0}=I_{n\times n}$ we have
\begin{multline*}
 \p_{x_n}^2U_1=\p_{x_n}^2(U\circ F)= \p_{ab}^2U_2|_F\p_{x_n}F^a\p_nF^b+\p_aU_2|_F\p_{x_n}^2F^a \\
 = \p_{ab}^2U_1\p_{x_n}F^a\p_nF^b+\p_aU_2\p_{x_n}^2F^a=\p_{x_n}^2U_1+\p_aU_1\p_{x_n}^2F^a.
\end{multline*}
Thus
\[
 \p_aU_1\p_{x_n}^2F^a=0 \text{ on } \{x_n=0\}.
\]
As we may choose $\nabla U_1(p)$ freely, we have
 \[
  \p_{x_n}^2F^a(p)=0 \text{ on } \{x_n=0\},
 \]
 for all $a=1,\ldots,n$. Repeating the argument for all $p\in \Gamma$ shows that $\p_{x_n}^2F^a=0$ on $\{x_n=0\}$.
 
The proof is completed by differentiating \eqref{eq:op_decomp} in $x_n$ and by using an induction argument similar to that in \cite[Lemma 3.4]{lassas2016calder}.
 
%
%
%
%

\end{proof}

%
%
%

\bibliography{ref}
\bibliographystyle{abbrv}

\end{document}